\documentclass[twoside]{amsart}
\usepackage{amssymb}
\usepackage{geometry}
\usepackage{graphicx}
\usepackage{esint}
\usepackage{esint}
\usepackage[dvipsnames]{xcolor}

\newtheorem{theorem}{Theorem}
\newtheorem{proposition}{Proposition}
\newtheorem{lemma}{Lemma}
\newtheorem{corollary}{Corollary}
\newtheorem*{lemma*}{Lemma}
\newtheorem{remark}{Remark}

\newcommand{\dd}{\,\mathrm{d}}
\newcommand{\dn}{\mathrm{d}}
\newcommand{\eps}{\varepsilon}
\newcommand{\R}{{\mathbb R}}
\newcommand{\Rnn}{{\mathbb{R}_{\ge0}}}
\newcommand{\Rp}{{\mathbb{R}_{>0}}}
\newcommand{\Rinf}{[0,\infty]}
\newcommand{\N}{\mathbb{N}}
\newcommand{\Z}{\mathbb{Z}}
\newcommand{\loc}{\text{loc}}
\newcommand{\Lloc}{L_\text{loc}}
\newcommand{\Wloc}{W_\text{loc}}

\newcommand{\prb}{\mathcal{M}^+_\mathrm{m}(\mathbb T)}
\newcommand{\prbr}[1]{L^1_\mathrm{m}(\mathbb T;#1)}
\newcommand{\nrg}{{\mathcal E}}

\newcommand{\fnc}{{\mathcal F}}
\newcommand{\mob}{{\mathbf m}}
\newcommand{\crc}{{\mathbb T}}
\newcommand{\intom}{\int_{\mathbb T}}

\newcommand{\id}{\mathrm{id}}
\newcommand{\htk}{{\mathcal G}}
\newcommand{\mk}{\widetilde{\mathcal G}}
\newcommand{\eins}{{\mathbf 1}}

\newcommand{\velo}{\mathbf{v}}
\newcommand{\welo}{\mathbf{w}}
\newcommand{\welost}{\mathbf{w}^s_t}
\newcommand{\rhost}{\rho^s_t}
\newcommand{\mobst}{{\mathbf m}(\rhost)}
\newcommand{\mobstp}{{\mathbf m}'(\rhost)}
\newcommand{\mobstpp}{{\mathbf m}''(\rhost)}
\newcommand{\mrho}{\bar\rho^\eps}
\newcommand{\mrhot}{\bar\rho^\eps_t}
\newcommand{\lrho}{\bar\rho^*}
\newcommand{\mwelo}{\bar{\mathbf{w}}^\eps}
\newcommand{\mwelot}{\bar{\mathbf{w}}^\eps_t}

\newcommand{\kinet}{{\mathcal K}}
\newcommand{\kinint}{\frac{\welost}{\mobst+\mu}}
\newcommand{\psit}{\psi_t}

\newcommand{\partialx}{\partial_x}
\newcommand{\partialxx}{\partial_{xx}}

\newcommand{\kfrac}[2]{\left[\frac{#1}{#2}\right]}

\newcommand{\wed}{{\mathcal W}}
\newcommand{\curves}{{\mathfrak C}}

\newcommand{\BL}{{\mathrm d}_\text{BL}}

\newcommand{\blG}{\mathbb G}
\newcommand{\opG}{\mathbf G}

\title[WED for thin film equations]{Solutions to fourth order degenerate parabolic equations obtained via weighted energy dissipation}
\author{Daniel Matthes}
\author{Vera Pazukhina}

\begin{document}

\begin{abstract}
  We apply the variational method of Weighted Energy Dissipation (WED)
  to obtain a global-in-time approximation of solutions
  to fourth order degenerate parabolic equations of Cahn-Hilliard type.
  Differently from the standard approach to the existence theory,
  WED induces an elliptic regularization in time, not in space.
  The confinement on the phase field is guaranteed already for the approximation,
  a priori estimates follow without modification of Lyapunov functionals,
  and the approximations are weakly differentiable in time
  --- even twice in the case of linear mobility.
  While WED has been widely used for the construction of gradient flows in Hilbert spaces,
  this appears to be the first application of WED to a metric gradient flow
  beyond second order PDEs of Wasserstein type.
\end{abstract}

\maketitle

%%%%%%%%%%%%%%%%%%%%%%%%%%%%%%%%%%%%%%%%%%%%%%%%%%%%%%%
%%%%%%%%%%%%%%%%%%%%%%%%%%%%%%%%%%%%%%%%%%%%%%%%%%%%%%%
\section{Introduction}
%%%%%%%%%%%%%%%%%%%%%%%%%%%%%%%%%%%%%%%%%%%%%%%%%%%%%%%
%%%%%%%%%%%%%%%%%%%%%%%%%%%%%%%%%%%%%%%%%%%%%%%%%%%%%%%
%
% In this paper, we study the elliptic regularization in time by the WED approximation
% for two evolution equations of gradient flow type:
% the thin film equation with linear mobility for the thickness $\rho(t;x)\ge0$ of a thin viscous film over a flat substrate,
% \begin{align}
%   \label{eq:intro-tf}
%   \partial_t\rho = -\partial_x(\rho\,\partial_{xxx}\rho),
% \end{align}
% and the Cahn-Hilliard equation for the phase field parameter $\rho(t;x)\in[0,1]$,
% \begin{align}
%   \label{eq:intro-ch}
%   \partial_t\rho = -\partial_x\big(\mob(\rho)\,\partial_{xxx}\rho\big),
% \end{align}
% with a degenerate uniformly concave mobility $\mob:[0,1]\to\Rp$, i.e., $\mob(0)=\mob(1)=0$ and $\mob''\le-1$.
% We construct differentiable-in-time non-negative global approximations of solutions to \eqref{eq:intro-tf} and \eqref{eq:intro-ch}
% without regularization in space, and we prove convegence to a weak solution in the temporal inviscid limit.
% Our approch is complementary to the one established for abstract metric gradient flows \cite{AGS,RSSS};
% we use elements of that theory, but characterize the regularized solutions directly in terms of PDEs.

%%%%%%%%%%%%%%%%%%%%%%%%%%%%%%%%%%%%%%%%%%%%%%%%%%%%%%%
\subsection{The PDE}
%%%%%%%%%%%%%%%%%%%%%%%%%%%%%%%%%%%%%%%%%%%%%%%%%%%%%%%
%
In this paper, we apply the variational method of \emph{Weighted Energy Dissipation (WED)}
% --- also called parabolic \emph{Weighted Inertia-Dissipation-Energy (WIDE) principle} ---
to approximate non-negative weak solutions $\rho$ to certain degenerate fourth order parabolic PDEs globally in time.
For definiteness (and simplicity), we consider
\begin{align}
  \label{eq:intro-ch}
  \partial_t\rho + \partial_x\big(\mob(\rho)\,\partial_{xxx}\rho\big) = 0
\end{align}
on the one-dimensional torus $\crc$.
In \eqref{eq:intro-ch}, the mobility $\mob:I\to\Rnn$ is a prescribed concave function over the interval $I\subseteq[0,\infty)$ of admissible values for $\rho$,
which falls into one of the following two categories:
\begin{itemize}
\item \emph{Thin film:}
  $\mob(\rho)=\rho$ is linear on $I=[0,\infty)$.
  In this case, \eqref{eq:intro-ch} is a special thin film equation,
  that models the thickness $\rho(t;x)\ge0$ of a thin viscous film over a flat substrate, moving by surface tension.
\item \emph{Cahn-Hilliard:}
  $\mob$ is a Lipschitz continuous uniformly concave function on $I=[0,1]$, with $\mob(0)=\mob(1)=0$.
  In this case, \eqref{eq:intro-ch} is a degenerate Cahn-Hilliard equation for the phase field $\rho(t;x)\in[0,1]$.
\end{itemize}
Existence of weak solutions to \eqref{eq:intro-ch} in these --- and far more general --- cases are classical results,
see e.g. \cite{BF,Gruen} for the thin film equation and \cite{EG} for the Cahn-Hilliard equation.
The typical strategy for proving existence is via regularization of the mobility,
making the problem genuinely parabolic.
The approximations are smooth, but the price to pay is a violation of the constraints $\rho\ge0$ or $\rho\in[0,1]$, respectively,
and the necessity to modify the relevant Lyapunov functionals to obtain uniform a priori estimates.
A complementary approach is via a variational time-discretization
--- see \cite{GO} for a result in the thin film case and \cite{LMS} for the Cahn-Hilliard case ---
in which the constraint is built into the minimization and Lyapunov functionals are inherited by convexity,
but in exchange for an approximation that is discontinuous in time.
Further approaches (all with their pros and cons) to existence, also for more general thin film equations, include
the representation as an obstacle problem \cite{Gobstacle},
the discretization in space \cite{GR},
and the reformulation in terms of Lagrangian maps \cite{Naldi,Osberger}.

To the best of our knowledge,
this is the first time that WED is applied to \eqref{eq:intro-ch}.
The method is fully variational and produces global time-continuous approximations
that preserve mass and the constraint $\rho\in I$ by construction.
The central a priori estimate --- on $\partial_{xx}\rho$ in $L^2$ --- can be obtained by variation along the heat flow.
The approximations are at least once weakly differentiable in time, and even twice in the thin film case.
Apart from the preliminary results on the very particular fourth order DLSS equation in the thesis \cite{Simon},
this is apparently the first application of the WED method
to a specific gradient flow outside of the Hilbertian setting and beyond second order drift-diffusion.

%%%%%%%%%%%%%%%%%%%%%%%%%%%%%%%%%%%%%%%%%%%%%%%%%%%%%%%
\subsection{WED regularization of gradient flows}
%%%%%%%%%%%%%%%%%%%%%%%%%%%%%%%%%%%%%%%%%%%%%%%%%%%%%%%
%
The idea to regularize evolution equations by introduction of an additional second order time derivative dates back at least to the 1960's.
The inclusion of that regularizing term in a variational form is more recent \cite{Ilmanen},
and so are the applications of the WED principle to gradient flows \cite{ContiOrtiz,MielkeOrtiz}.
The interested reader is referred to the comprehensive survey \cite{Uli} for a historical account,
and also for an overview over the countless current applications of the variational elliptic-in-time regularization,
that go far beyond gradient flows and include, for instance, image analysis and the Navier-Stokes equations.

We briefly illustrate the general idea of WED.
For the gradient flow $\dot x=-\nabla E(x)$ of a smooth bounded energy function $E$ on Euclidean space,
the elliptic-in-time regularization with parameter $\eps>0$ is given by
\begin{align}
  \label{eq:ETR-Rd}
  -\eps\ddot x+\dot x=-\nabla E(x).
\end{align}
A global-in-time solution $x_*:[0,\infty)\to\R^d$ to \eqref{eq:ETR-Rd} coincides 
with the unique minimizer $\gamma_*$ of the so-called WED functional,
\begin{align*}
  \wed^\eps(\gamma) = \int_0^\infty \frac{e^{-t/\eps}}{\eps}
  \left[\frac\eps 2\big|\dot\gamma(t)\big|^2 + E\big(\gamma(t)\big)\right]\dd t,
\end{align*}
which is defined on all $H^1$-regular curves $\gamma:[0,\infty)\to\R^d$ with fixed initial datum $\gamma(0)=x(0)$.

On a Riemannian manifold with metric tensor $g$,
the gradient flow equation for $E$ can be written
in local coordinates as $\dot x^k=-\sum_\ell g^{k\ell}(x)\partial_\ell E(x)$.
Minimizers of the corresponding WED functional
\begin{align}
  \label{eq:WED-mf}
  \wed^\eps(\gamma) = \int_0^\infty \frac{e^{-t/\eps}}{\eps}
  \left[\frac\eps 2\sum_{k,\ell} g_{k\ell}(\gamma(t))\dot\gamma^k(t) \dot\gamma^\ell(t) + E\big(\gamma(t)\big)\right]\dd t
  \end{align}
satisfy a variant of \eqref{eq:ETR-Rd}, with the flat acceleration replaced by the covariant derivative,
\begin{align}
  \label{eq:ETR-mf}
  -\eps\left(\ddot x^k + \sum_{i,j}\Gamma^k_{ij}(x)\dot x^i\dot x^j\right)
  + \dot x^k = - \sum_\ell g^{k\ell}(x)\partial_\ell E(x),
\end{align}
where $\Gamma$ is $g$'s Christoffel symbol.
In the smooth finite-dimensional setting above, i.e., both in the flat euclidean or the curved manifold case,
it is elementary to show that the regularized solutions converge to the gradient flow as $\eps\searrow0$.

Naturally, the significance of the WED approach lies in the construction of solutions
to gradient flows in certain \emph{non-smooth infinite-dimensional} generalizations of the situations above.
With its global variational nature, WED opens an angle of attack
to study existence and regularity of solutions in abstract evolution problems,
as well as $\Gamma$-limits \cite{Liero} and optimal control \cite{Fukao}.
Applications to specific PDEs, see e.g. \cite{Audrito1,Audrito2,Duzaar,Bernd,Ulisse},
are so far mostly set in Hilbert spaces,
and thus are infinite dimensional versions of the flat euclidean flow \eqref{eq:ETR-Rd}.
A general approach to the construction of Hilbertian gradient flows
via weighted energy dissipation has been developed in \cite{MielkeStefanelli}.
The natural infinite-dimensional version of the flow \eqref{eq:ETR-mf} on Riemannian manifolds
is the WED regularization of \emph{metric gradient flows}. % see \cite{AGS} for a comprehensive treatment.
A metric counterpart to the Hilbertian approach in \cite{MielkeStefanelli} has been developed recently \cite{RSSS}:
there, it has been shown that the inviscid limit of the WED approximation produces curves of maximal slope
under the same general conditions that also guarantee convergence of the time-discrete minimizing movement scheme,
see \cite{AGS}.
Applications include gradient flows in Banach spaces or in the $L^2$-Wasserstein metric,
but were so far limited to second order drift-diffusion equations.

%%%%%%%%%%%%%%%%%%%%%%%%%%%%%%%%%%%%%%
\subsection{Motivation for this work}
%%%%%%%%%%%%%%%%%%%%%%%%%%%%%%%%%%%%%%
%
The degenerate fourth order PDEs \eqref{eq:intro-ch} fall into the framework of metric gradient flows.
Since the mobility function $\mob$ is assumed to be concave, there is an associated (generalized) transport metric \cite{DNS,LisMar};
for the linear mobility, that metric is the celebrated $L^2$-Wasserstein distance.
Thus the abstract approach from \cite{RSSS} guarantees the convergence of elliptic-in-time regularized solutions of a curve of maximal slope.

There are mainly three reasons why we pursue a different, more hands-on approach in the paper at hand.
The first and foremost is that \cite{RSSS}
is focused on the recovery of a metric gradient flow in the limit $\eps\searrow0$,
and WED minimizers are studied almost exclusively under this perspective;
some finer properties are discussed in the semi-convex case,
but solely on the abstract level of metric derivatives and subdifferentials.
Here, we provide a quite explicit characterization of the evolution equation satisfied by the WED minimizers:
\eqref{eq:tffinal} and \eqref{eq:chfinal} give a rigorous mathematical meaning
to the differential geometric formulation \eqref{eq:ETR-mf} above in the context of \eqref{eq:intro-ch}.
In particular, the Hamilton-Jacobi terms that govern acceleration in the metric space become clearly visible,
and for $\mob(\rho)=\rho$, second order weak differentiability of $\rho$ in time is obtained. 

The second reason is that for more complicated PDEs like \eqref{eq:intro-ch},
the abstract metric theory is not completely satisfactory:
while the existence and approximation of a curve of maximal slope is guaranteed under very general conditions,
the evolution equation satisfied by that curve needs to be identified
% in a auxiliary step that is very specific to the PDE under consideration, namely
by establishing a chain rule, which is very specific to the problem at hand.
% or equivalently by explicitly characterizing the metric slope in functional-analytic terms.
The proof of chain rules can be extremely cumbersome already for second order equations, see \cite{Caillet,Gess}.
To the best of our knowledge, the only chain rule that has ever been rigorously proven for a fourth order PDE
is for the Quantum Drift Diffusion model \cite{GST} --- see also \cite[Example 11.1.10]{AGS} ---
which is the $L^2$-Wasserstein gradient flow of the Fisher information functional.
It is not evident how to transfer that result to the Dirichlet energy, and to the case of nonlinear mobilities $\mob$.

The third reason is that the WED approximation provides a variational tool
to construct also ``non-genuine'' metric gradient flows,
that is PDEs with dissipative structure, but without associated metric.
The primary class of examples are of the form \eqref{eq:intro-ch} with \emph{non-concave} mobilities
like $\mob(r)=r^m$ for $r\ge0$, or $\mob(r)=r^m(1-r)^m$ for $r\in[0,1]$, with some $m>1$.
The approximation of such generalized metric gradient flows by WED
is the aim of a forthcoming paper \cite{Parsch}, which builds on --- but vastly extends --- the methods developed here.

%%%%%%%%%%%%%%%%%%%%%%%%%%%%%%%%%%%%%%%%%%%%%%%%%%%%%%%
\subsection{The PDE as gradient flow}
%%%%%%%%%%%%%%%%%%%%%%%%%%%%%%%%%%%%%%%%%%%%%%%%%%%%%%% 
%
Following the formal geometric concept developed by Otto in \cite{OttoPME} for the $L^2$-Wasserstein metric,
we consider \eqref{eq:intro-ch} as gradient flows of the Dirichlet functional
\begin{align}
  \label{eq:intro-Dirichlet}
  \nrg(\rho) = \frac12\intom \big(\partial_x\rho\big)^2\dd x
\end{align}
on the infinite dimensional Riemannian manifold of densities $\prbr{I}$ over the one-dimensional torus $\crc$
with values in $I$ and total mass $\mathrm m>0$, with $\mathrm m<1$ if $I=[0,1]$.
The Riemannian structure on $\prbr{I}$ is defined as follows:
for a tangent vector at $\rho$, i.e., a function $\dot\rho\in L^1(\crc)$ of vanishing integral,
define its squared norm as the minimal kinetic energy needed to realize $\dot\rho$ with given mobility $\mob(\rho)$,
\begin{align*}
  \|\dot\rho\|_\rho^2 := \inf\left\{\intom\frac{\welo^2}{\mob(\rho)}\dd x\,\middle|\,\dot\rho+\partial_x\welo=0\right\}.
\end{align*}
For sufficiently regular $\rho$ and $\dot\rho$, the above infimum is attained.
%and the corresponding optimal $\welo$ is of gradient type,
%\begin{align}
%  \label{eq:welograd}
%  \welo = \mob(\rho)\partial_x\varphi,
%  \quad \text{that is},\quad
%  \dot\rho = -\partial_x\big(\mob(\rho)\,\partial_x\varphi).
%\end{align}
% Moreover,
The norm polarizes and thus gives rise to a scalar product on tangent vectors.
Proceeding further formally by computing the Christoffel symbols and the associated covariant derivative,
one obtains the following PDE
as an analogue of the elliptic-in-time regularization \eqref{eq:ETR-mf} on Riemannian manifolds:
\begin{align}
  \label{eq:ETR-intro}
  \begin{split}
    \eps\,\partial_x\left(\mob(\rho)\,\left\{\partial_t\left[\frac{\welo}{\mob(\rho)}\right]
        +\partial_x\left[\frac{\mob'(\rho)}2\left(\frac{\welo}{\mob(\rho)}\right)^2\right]\right\}\right)
    % \eps\partial_x\left(\mob(\rho)\,\partial_x\left\{\partial_t\varphi+\frac{\mob'(\rho)}2\big(\partial_x\varphi\big)^2\right\}\right)
    % \left\{\partial_t\left(\frac{\welo}{\mob(\rho)}\right)+\partial_x\left(\frac{\mob'(\rho)}2\left(\frac{\welo}{\mob(\rho)}\right)^2\right)\right\}
    +\partial_t\rho
    = -\partial_x\big(\mob(\rho)\,\partial_{xxx}\rho\big),
  \end{split}
\end{align}
where $\rho$ and $\welo$ are coupled by the continuity equation % \eqref{eq:welograd},
\begin{align}
  \label{eq:strongcont}
  \partial_t\rho + \partial_x\welo = 0.
\end{align}
The associated WED functional, in analogy to \eqref{eq:WED-mf}, is given by
\begin{align}
  \label{eq:WED-intro}
  \wed^\eps(\rho,\welo)
  = \int_0^\infty \frac{e^{-t/\eps}}{\eps}\left[
  \frac\eps2\intom \frac{\welo_t^2}{\mob(\rho_t)}\dd x
  + \nrg(\rho_t)
  \right] \dd t,
\end{align}
where $\rho=(\rho_t)_{t\ge0}$ is a curve of densities $\rho_t\in\prbr{I}$,
and $\welo=(\welo_t)_{t\ge0}$ is a curve of integrable functions $\welo_t\in L^1(\crc)$,
that are connected to one another by means of the continuity equation \eqref{eq:strongcont}.
% The goal of this paper is prove the existence minimizers of \eqref{eq:WED-intro},
% study their relation to the PDE \eqref{eq:ETR-intro},
% and to prove their convergence to weak solutions of \eqref{eq:intro-ch}.

%%%%%%%%%%%%%%%%%%%%%%%%%%%%%%%%%%%%%%%%%%%%%%%%%%%%%%%
\subsection{Main results}
%%%%%%%%%%%%%%%%%%%%%%%%%%%%%%%%%%%%%%%%%%%%%%%%%%%%%%%
%
We begin by summarizing our results for the thin film case,
which are slightly stronger than for the Cahn-Hilliard one further below.
A reason is that the regularizing term in the PDE \eqref{eq:ETR-intro} simplifies significantly for $\mob'\equiv1$,
and thus admits to prove the following:
\begin{theorem}
  \label{thm:tfpde}
  Assume $\mob(r)=r$, and let $\bar\rho_0\in\prbr{\Rnn}$ with $\nrg(\bar\rho_0)<\infty$.
  For each $\eps>0$, the functional $\wed^\eps$ possesses
  a minimizer $(\mrho,\mwelo)$ of regularity
  \[ \mrho\in L^2_\loc\big((0,\infty);H^2(\crc)\big)\cap W^{2,1}_\loc\big((0,\infty);W^{2,\infty}(\crc)^*\big),
    \quad \frac{(\mwelo)^2}{\mrho} \in L^1_\loc\big((0,\infty)\times\crc\big),\]
  that is weakly continuous in time with $\mrho_0=\bar\rho_0$
  and satisfies the PDE \eqref{eq:ETR-intro} in distributional sense:
  \begin{align}
    \label{eq:tffinal}
    \eps\left[-\partial_{tt}\mrho+\partial_{xx}\left(\frac{\big(\mwelo\big)^2}{\mrho}\right)\right]
    + \partial_t\mrho
    = -\partial_{xx}\left(\mrho\,\partial_{xx}\mrho - \frac12\big(\partial_x\mrho\big)^2\right).
  \end{align}
\end{theorem}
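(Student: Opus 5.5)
We obtain the minimizer by the direct method, derive the Euler--Lagrange equation from two families of variations, and read off the time regularity from the equation itself.

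\textbf{Existence.} We minimize $\wed^\eps$ over pairs $(\rho,\welo)$ with $\rho_t\in\prbr{\Rnn}$, $\rho_0=\bar\rho_0$, linked by \eqref{eq:strongcont} in distributional sense. The constant curve $\rho_t\equiv\bar\rho_0$, $\welo\equiv0$ gives the finite value $\nrg(\bar\rho_0)$. For a minimizing sequence, the weighted kinetic energy bounds the Benamou--Brenier action on every $[0,T]$. This yields equicontinuity in the $L^2$-Wasserstein distance, hence in the weak topology tested against Lipschitz functions, together with weak compactness of the fluxes $\welo$ as measures. The weighted bound on $\nrg$ gives $\rho\in L^2_\loc(H^1)$. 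Joint convexity and lower semicontinuity of $(\rho,\welo)\mapsto\int\welo^2/\rho$ and of $\nrg$ then pass to the limit, and the continuity equation is linear and survives. The initial datum is kept by the equicontinuity.

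\textbf{First variation: flow-type perturbations.} We perturb by a time-dependent smooth vector field. Fix $\xi\in C^\infty_c((0,\infty)\times\crc)$ and let $\Phi^s_t=\id+s\xi_t$, $\rho^s_t=(\Phi^s_t)_\#\mrho_t$. The corresponding flux is the push-forward of $\mwelo_t$ plus $\partial_t\xi$ transported with $\mrho_t$. Differentiating $\wed^\eps$ at $s=0$ gives three contributions:
\begin{itemize}
\item the kinetic part contributes $\int\!\!\int e^{-t/\eps}\big[\mwelo\,\partial_t\xi\,/\,\ldots\big]$, which after the standard computation is $\int\!\!\int e^{-t/\eps}\big[\mwelo\partial_t\xi+\frac{(\mwelo)^2}{\mrho}\partial_x\xi\big]$;
\item the Dirichlet energy contributes the known first variation $\int\big(\mrho\partial_{xx}\mrho-\frac12(\partial_x\mrho)^2\big)\partial_{xx}\xi$ integrated in time;
\item the weight $e^{-t/\eps}$, combined with $\eps\partial_t$ hitting it, produces the term $\partial_t\mrho$.
\end{itemize}
This last term is only available once the Dirichlet variation is justified, and that requires $\mrho\in H^2$ for a.e.\ $t$. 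Replacing $\xi$ by $e^{t/\eps}\xi$ and using $\mwelo=$ flux with $-\partial_t\mrho=\partial_x\mwelo$ gives \eqref{eq:tffinal} tested against $\partial_x$ of test functions, which is enough on the torus up to constants fixed by mass.

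\textbf{Key estimate: second variation along the heat flow.} The main obstacle is the $L^2_\loc(H^2)$ bound needed to make sense of the energy variation. We would compare the minimizer with $\rho^s_t=e^{s\eta(t)\Delta}\mrho_t$ and $\welo^s_t=e^{s\eta(t)\Delta}\mwelo_t-s\eta'(t)\partial_x\rho^s_t$, for a cutoff $\eta\ge0$ supported in $(0,\infty)$. Four facts drive the comparison:
\begin{itemize}
\item the heat flow preserves mass and nonnegativity;
\item $\frac{\dn}{\dn s}\nrg=-\eta\int(\partial_{xx}\rho)^2$;
\item by the Wasserstein contractivity of the heat flow on $\crc$ (zero curvature), the kinetic term $\int\welo^2/\rho$ does not increase under heat smoothing, via joint convexity and Jensen;
\item the correction term $s\eta'\partial_x\rho^s$ costs only $O(s)\eta'\int|\partial_x\rho|\,|\welo|/\rho+O(s^2)$, which is controlled by the kinetic energy times the Fisher information. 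On the torus the Fisher information is itself bounded through the entropy dissipation; alternatively we would first cut $\eta'$ against $\eta$.
\end{itemize}
Minimality then gives $\int\eta e^{-t/\eps}\|\partial_{xx}\mrho\|_{L^2}^2\,\dd t<\infty$.

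\textbf{Time regularity.} From the equation, $\partial_t\mrho=-\partial_x\mwelo$ with $\mwelo\in L^1_\loc$, because $|\mwelo|\le\frac12\big((\mwelo)^2/\mrho+\mrho\big)$. The terms $\partial_{xx}\big((\mwelo)^2/\mrho\big)$ and $\partial_{xx}(\ldots)$ lie in $L^1_\loc(W^{2,\infty}(\crc)^*)$. Solving \eqref{eq:tffinal} for $\eps\,\partial_{tt}\mrho$ therefore places it in $L^1_\loc(W^{2,\infty}(\crc)^*)$, which gives the stated $W^{2,1}_\loc$ regularity.
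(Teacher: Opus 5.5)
You follow the paper's route for the existence argument, the spatial inner variation, and for reading off $W^{2,1}_\loc\big((0,\infty);W^{2,\infty}(\crc)^*\big)$ from the equation, and those parts are fine. One small correction: the spatial variation needs only $\mrho_t\in H^1(\crc)$. It produces the form with $\tfrac32(\partial_x\mrho)^2\partial_{xx}\Phi+\mrho\,\partial_x\mrho\,\partial_{xxx}\Phi$, and $H^2$ is used only to integrate by parts into \eqref{eq:tffinal}.

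The genuine gap is in the central $H^2$ estimate, specifically your fourth fact. The flux correction $-s\eta'(t)\,\partial_x\rho^s_t$ generates, at first order in $s$, the term $-s\eta'(t)\intom (\htk^{s\eta(t)}\ast\mwelo_t)\,\partial_x\rho^s_t/\rho^s_t\dd x$. You bound it by $\sqrt{2\kinet(\mrho_t,\mwelo_t)}\,\sqrt{I(\rho^s_t)}$, where $I(f)=\intom f_x^2/f\dd x$ is the Fisher information.
\begin{itemize}
\item Nothing available controls $I(\rho^s_t)$ as $s\searrow0$. A priori $\mrho_t$ is only in $H^1(\crc)$ with finite action, and an $H^1$ density vanishing linearly at one point already has $I=+\infty$.
\item The entropy dissipation $\frac{\dn}{\dn s}\intom\rho^s_t\log\rho^s_t\dd x=-\eta(t)\,I(\rho^s_t)$ bounds only $\int_0^\sigma I(\rho^s_t)\dd s$. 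Inserted into your Cauchy--Schwarz bound, it leaves a contribution of order $\sigma^{-1/2}$ in the difference quotient, so minimality yields no bound on $\partial_{xx}\mrho$.
\item Choosing $\eta=\Psi^2$ so that $|\eta'|\le C\Psi$ does not help by itself. It is useful only if $I(\rho^s_t)$ can be absorbed into the dissipation $\eta(t)\|\partial_{xx}\rho^s_t\|_{L^2}^2$ coming from $\nrg$.
\end{itemize}

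That absorption is the missing idea. The paper proves, in Lemma \ref{lem:villani}, that $\intom f_x^2/f\dd x\le 3\|f_{xx}\|_{L^2}$ for positive $f$ on $\crc$; the proof integrates $(f_x^3/f)_x=-f_x^4/f^2+3f_x^2f_{xx}/f$. It then uses convexity of $s\mapsto\nrg(\rho^s_t)$ to get $\big(\nrg(\mrho_t)-\nrg(\rho^s_t)\big)/s\ge\psi_t\|\partial_{xx}\rho^s_t\|_{L^2}^2$. This places the dissipation at the \emph{same} smoothed density at which the Fisher information appears. With $\psi_t=\eps e^{t/\eps}\Psi_t^2$ and $|\psi_t'|\le e^{t/\eps}\Psi_t(1+2\eps/\tau)$, Young's inequality $3\Psi\|\partial_{xx}\rho^s_t\|\le\frac92+\frac12\Psi^2\|\partial_{xx}\rho^s_t\|^2$ closes the estimate.

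If you prefer an entropy argument, you must keep the sign instead of taking absolute values. Since $\partial_t\rho^s+\partial_x\welo^s=0$, one has exactly
\[ \frac{\dn}{\dn s}\kinet(\rho^s_t,\welo^s_t) = -\eta(t)\intom\rho^s_t\Big(\partial_x\frac{\welo^s_t}{\rho^s_t}\Big)^2\dd x-\eta'(t)\,\frac{\dn}{\dn t}\intom\rho^s_t\log\rho^s_t\dd x . \]
The last term must then be integrated by parts in $t$ against the weight $e^{-t/\eps}$, using that the entropy is bounded in terms of $\nrg$. This step is absent from your proposal. Without it, or the inequality above, you have not established the $H^2$ bound, hence neither the stated regularity nor the form \eqref{eq:tffinal} of the equation.
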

About the inviscid-in-time limit of the WED approximation, we show:
\begin{theorem}
  \label{thm:tflimit}
  Any sequence of minimizers $(\mrho,\mwelo)$ for $\wed^\eps$ with $\eps\searrow0$
  contains a (non-relabeled) subsequence such that 
  \begin{align}
    \label{eq:tflimit9}
    \mrho \to \lrho \quad \text{strongly in $L^2_\loc\big((0,\infty);H^1(\crc)\big)$ and weakly in $L^2_\loc\big((0,\infty);H^2(\crc)\big)$},
  \end{align}
  with a limit $\lrho:[0,\infty)\to\prbr{\Rnn}$
  that is weakly continuous in time with $\lrho(0)=\bar\rho_0$
  and satisfies the following weak form of the thin film equation \eqref{eq:intro-ch}:
  \begin{align}
    \label{eq:tf0weak}
    0 = \int_0^\infty\intom \left(\lrho\,\partial_t\Phi+\frac32\big(\partial_x\lrho\big)^2\,\partial_{xx}\Phi+\lrho\,\partial_x\lrho\,\partial_{xxx}\Phi\right)\dd x\dd t
  \end{align}
  for any test function $\Phi\in C^\infty_c\big((0,\infty)\times\crc\big)$.
\end{theorem}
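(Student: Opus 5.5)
We will derive $\eps$-uniform a priori estimates from the WED functional and then pass to the limit in the weak form of \eqref{eq:tffinal}. That weak form is obtained by testing with $\Phi\in C^\infty_c((0,\infty)\times\crc)$:
\[
0=\int_0^\infty\!\!\intom\Big(-\eps\,\mrho\,\partial_{tt}\Phi+\eps\frac{(\mwelo)^2}{\mrho}\partial_{xx}\Phi+\mrho\,\partial_t\Phi-\big(\mrho\partial_{xx}\mrho-\tfrac12(\partial_x\mrho)^2\big)\partial_{xx}\Phi\Big)\dd x\dd t .
\]
For $H^2$ densities, integrating $\mrho\,\partial_{xx}\mrho\,\partial_{xx}\Phi$ by parts in $x$ gives $-\intom\big((\partial_x\mrho)^2\partial_{xx}\Phi+\mrho\,\partial_x\mrho\,\partial_{xxx}\Phi\big)\dd x$. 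The limiting form of the right-hand side is therefore exactly \eqref{eq:tf0weak}. It remains to show that the two $\eps$-terms vanish and that the quadratic terms converge.

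\textbf{Step 1: uniform estimates.} I would first compare $\wed^\eps(\mrho,\mwelo)$ with a competitor that stays constant, $\rho_t\equiv\bar\rho_0$, $\welo\equiv0$. This gives $\wed^\eps\le\nrg(\bar\rho_0)$. A finer estimate, obtained by rescaling time or from the energy–dissipation structure of the minimizer (presumably established in the proof of Theorem~\ref{thm:tfpde}), should give, for every $T>0$ and uniformly in $\eps$,
\[
\sup_{t\le T}\nrg(\mrho_t)+\int_0^T\!\!\intom\frac{(\mwelo_t)^2}{\mrho_t}\dd x\dd t+\int_0^T\|\partial_{xx}\mrho_t\|_{L^2}^2\dd t\le C(T).
\]
The $H^2$ bound is the central estimate mentioned in the introduction, obtained by varying along the heat flow: perturbing $\mrho$ in the direction of the heat flow decreases $\nrg$ by $\|\partial_{xx}\rho\|^2_{L^2}$, while the change in kinetic energy is controlled because the heat flow is a Wasserstein contraction. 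This is the step I expect to be the main obstacle, since the bound must be uniform in $\eps$ and local in time, with the weight $e^{-t/\eps}/\eps$ handled, for instance via a time-rescaling $t=\eps s$ or a time-localized perturbation. I would first reuse the heat-flow variation from the existence proof while tracking the $\eps$-dependence carefully.

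\textbf{Step 2: compactness.} From $\partial_t\mrho=-\partial_x\mwelo$ and $\intom|\mwelo|\dd x\le\big(\mathrm m\intom(\mwelo)^2/\mrho\,\dd x\big)^{1/2}$ we get $\partial_t\mrho$ bounded in $L^2(0,T;W^{1,\infty}(\crc)^*)$. Aubin–Lions with $H^2\subset\subset H^1\subset (W^{1,\infty})^*$ then yields a subsequence satisfying \eqref{eq:tflimit9}. Weak continuity in time and $\lrho(0)=\bar\rho_0$ follow from an equicontinuity estimate $\BL(\mrho_t,\mrho_s)\le C|t-s|^{1/2}$ together with Arzelà–Ascoli. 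Nonnegativity and mass are preserved in the limit.

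\textbf{Step 3: limit passage.} The linear terms converge by weak convergence. The term $-\eps\int\!\!\int\mrho\,\partial_{tt}\Phi$ is $O(\eps)$ by the $L^\infty_tL^1_x$ bound. The term $\eps\int\!\!\int\frac{(\mwelo)^2}{\mrho}\partial_{xx}\Phi$ is at most $\eps\|\partial_{xx}\Phi\|_\infty C(T)$ by Step 1, so it also vanishes. The quadratic terms $(\partial_x\mrho)^2$ and $\mrho\,\partial_x\mrho$ converge in $L^1_\loc$ by the strong $L^2_\loc H^1$ convergence. This yields \eqref{eq:tf0weak}.
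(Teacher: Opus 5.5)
Your outline follows the paper's proof. The ingredients are the same: the $\eps$-uniform kinetic bound \eqref{eq:kinetbound}; an $\eps$-uniform local $H^2$ bound from a time-localized heat-flow variation, which is Proposition \ref{prp:tfH2}, proved beforehand and simply invoked in the paper; the $\BL$-H\"older estimate \eqref{eq:BLlimit} with Arzel\`a--Ascoli for weak continuity and $\lrho(0)=\bar\rho_0$; compactness in $L^2H^1$; and passage to the limit in the Euler--Lagrange equation, with the $\eps$-terms killed by mass conservation and \eqref{eq:kinetbound}. The only real deviation is the compactness tool. You use classical Aubin--Lions--Simon with $\partial_t\mrho=-\partial_x\mwelo$ bounded in $L^2(0,T;W^{1,\infty}(\crc)^*)$, which gives strong $L^2H^1$ compactness in one step. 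The paper instead uses the Rossi--Savar\'e version with $g=\BL$, obtaining convergence in measure in $H^1$, and then upgrades via Vitali using the $L^3$ interpolation bound. Both routes work.

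Step 1, however, claims more than is proven or needed, and should be corrected.
\begin{itemize}
\item There is no $\sup_{t\le T}\nrg(\mrho_t)$ bound. The paper only shows that $\nrg-\eps\kinet$ is nonincreasing, i.e.\ $\nrg(\mrho_t)\le\nrg(\bar\rho_0)+\eps\kinet(\mrho_t,\mwelo_t)$, together with the integrated bound \eqref{eq:potenzbound}. You never use the sup bound anyway.
\item The $H^2$ estimate is only available away from $t=0$. The heat-flow time must be $s\psi(t)$ with $\psi=\eps e^{t/\eps}\Psi^2$ vanishing near $t=0$. So \eqref{eq:tfH2} holds only on $[2\tau,T-2\tau]$, with constant $K(1+(\eps/\tau)^2)T$, which is $\eps$-uniform only for $\eps\lesssim\tau$. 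Aubin--Lions therefore has to be run on such subintervals, followed by a diagonal argument. This is exactly why \eqref{eq:tflimit9} is only local in $(0,\infty)$.
\item Your heat-flow sketch misses the term that makes the estimate nontrivial. Because the heat-flow time depends on $t$, the perturbed flux is $\mk^s_t\ast\mwelo_t-s\psi'_t\,\partial_x\mk^s_t\ast\mrho_t$. The cross term between the two pieces is first order in $s$. It is bounded by $|\psi'_t|$ times the kinetic energy plus the Fisher information $\intom(\partial_x\rho^s_t)^2/\rho^s_t\dd x$. The Wasserstein-contraction property does not control this term. The paper closes the estimate only by bounding the Fisher information by $3\|\partial_{xx}\rho^s_t\|_{L^2}$ (Lemma \ref{lem:villani}) and absorbing it via Young's inequality.
\end{itemize}

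Finally, there is a harmless sign slip. Testing \eqref{eq:tffinal} gives $-\mrho\,\partial_t\Phi+(\mrho\partial_{xx}\mrho-\frac12(\partial_x\mrho)^2)\partial_{xx}\Phi$ next to the $\eps$-terms, the opposite of what you wrote. Since the $\eps$-terms vanish, the limit equation is unaffected.
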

We are unable to prove a direct analogue of Theorem \ref{thm:tfpde}
for the Cahn-Hilliard equation \eqref{eq:intro-ch} with strictly concave $\mob$;
the main reason is that the a priori estimates are too weak to 
give a meaning to the quotient $\welo^2/\mob(\rho)$.
We shall therefore use a modified WED functional $\widehat\wed^\eps$
in which the denominator of the kinetic term $\welo^2/\mob(\rho)$ is replaced by the uniformly positive expression $\mob(\rho)+\mu$,
with a constant $\mu$ that tends to zero with $\eps$ as $\eps=o(\mu^3)$.
Formally, this appears to be a parabolic regularization of the problem,
but is actually quite different in the analysis, because the constraint $\rho\in I=[0,1]$ is still enforced,
which potentially reduces the smoothness of $\mrho$ near points where $\mrho=0$ or $\mrho=1$.
\begin{theorem}
  \label{thm:chpde}
  Let $\mob:[0,1]\to\R$ be uniformly concave and Lipschitz with $\mob(0)=\mob(1)=0$ and $-\mob''\ge1$. Let $\bar\rho_0\in\prbr{[0,1]}$ with $\nrg(\bar\rho_0)<\infty$.
  For each $\eps>0$, the functional $\widehat\wed^{\eps}$ possesses
  a minimizer $(\mrho,\mwelo)$ of regularity
  \[ \mrho\in L^2_\loc\big((2\eps,\infty);H^2(\crc)\big),\quad \partial_t\mrho,\,\mwelo\in L^2_\loc\big((2\eps,\infty)\times\crc\big), \]
  that is weakly continuous in time with $\mrho_0=\bar\rho_0$
  and satisfies the following perturbed version of the PDE \eqref{eq:ETR-intro}: 
  \begin{equation}
    \label{eq:chfinal}
    \begin{split}
     0=&\int_0^\infty\intom \Big\{\partial_{xx}\mrho\,\partial_x\big(\mob(\mrho)\xi\big) + \frac{\mob(\mrho)}{\mob(\mrho)+\mu}\mwelo\xi\Big\} \dd x\dd t \\
      &+ \eps \int_0^\infty\intom\left\{\frac{\mwelo}{\mob(\mrho)+\mu}\partial_t\big(\mob(\mrho)\xi\big)
        + \frac{\mob'(\mrho)}2\left(\frac{\mwelo}{\mob(\mrho)+\mu}\right)^2\partial_x\big(\mob(\mrho)\xi\big)\right\}\dd x\dd t
    \end{split}
  \end{equation}
  for every test function $\xi\in C^\infty_c\big((2\eps,\infty)\times\crc\big)$.
\end{theorem}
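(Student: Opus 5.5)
We prove Theorem \ref{thm:chpde} in three stages: existence of a minimizer by the direct method, a priori regularity, and the Euler--Lagrange equation \eqref{eq:chfinal} obtained from two families of admissible variations.

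\emph{Existence.} We work on the set of pairs $(\rho,\welo)$ with $\rho_t\in\prbr{[0,1]}$ that solve $\partial_t\rho+\partial_x\welo=0$ distributionally and satisfy $\rho_0=\bar\rho_0$. The modified functional is
\[\widehat\wed^\eps(\rho,\welo)=\int_0^\infty\frac{e^{-t/\eps}}{\eps}\Big[\frac\eps2\intom\frac{\welo_t^2}{\mob(\rho_t)+\mu}\dd x+\nrg(\rho_t)\Big]\dd t.\]
The constant curve $\rho_t\equiv\bar\rho_0$ with $\welo\equiv0$ gives a finite value. Along a minimizing sequence we obtain the following bounds:
\begin{itemize}
\item Since $\mob+\mu\le\|\mob\|_\infty+\mu$, the flux $\welo$ is bounded in weighted $L^2_\loc$.
\item $\rho$ is bounded in $L^2_\loc(H^1)$.
\item The continuity equation gives equicontinuity in a dual norm.
\end{itemize}
An Aubin--Lions argument then yields $\rho^n\to\rho$ strongly in $L^2_\loc(L^2)$ and $\welo^n\rightharpoonup\welo$ weakly. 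The limit still satisfies the constraint and the continuity equation. Lower semicontinuity holds because $(w,r)\mapsto w^2/(\mob(r)+\mu)$ is jointly lsc and convex in $w$, and because $\nrg$ is weakly lsc.

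\emph{Regularity and the $H^2$ estimate.} We vary the minimizer along the heat flow: set $\rho^s_t:=h_{\sigma(t)s}*\mrho_t$, where $h$ is the heat kernel and $\sigma$ is a time cutoff vanishing near $0$, and move $\mwelo$ accordingly. This preserves mass and the constraint $[0,1]$. The derivative of $\nrg$ at $s=0$ is $-\intom(\partial_{xx}\rho)^2$. The derivative of the kinetic term is the hard part. One needs convexity of $(w,r)\mapsto w^2/(\mob(r)+\mu)$ together with $-\mob''\ge1$, as in the Daneri--Savar\'e / Lisini--Marigonda contractivity of the heat flow in the $\mob$-transport distance. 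Because $\mu>0$ destroys exact geodesic convexity, we expect an error term of size $C\mu^{-1}$ times the kinetic energy, or similar. The cutoff $\sigma$ also contributes terms from $\partial_t\sigma$.

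Minimality ($s\ge0$ one-sided) then gives $\int\sigma\|\partial_{xx}\mrho\|^2\lesssim$ energy. This is where $\eps=o(\mu^3)$ is used, and where the restriction $t>2\eps$ (the support of the time cutoff) enters. Since $\mob+\mu\ge\mu$, we get $\mwelo\in L^2_\loc$, hence $\partial_t\mrho=-\partial_x\mwelo$ in $H^{-1}$. To reach $L^2$ we rewrite the Euler--Lagrange equation: it expresses $\mwelo$ through $\partial_x(\mob\,\partial_{xx}\rho)$ plus $\eps$-terms. I am less sure of this step. I would try to bootstrap it from the equation once it has been derived.

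\emph{Euler--Lagrange equation.} For a test function $\xi$ supported in $(2\eps,\infty)\times\crc$, we perturb through the flux: define $\rho^s$ by solving
\[\partial_s\rho^s+\partial_x\big(\mob(\rho^s)\,\xi\,e^{t/\eps}\big)=0\]
in $s$ (the weight $e^{t/\eps}$ is inserted to cancel the WED weight). This flow keeps $\rho^s\in[0,1]$ because $\mob$ vanishes at $0$ and $1$, and it preserves mass. The flux is transported consistently, $\welo^s=\welo+s\,\partial_t(\mob(\rho)\xi e^{t/\eps})+\dots$, so the continuity equation holds to first order. Differentiating at $s=0$ gives the following contributions:
\begin{itemize}
\item From $\nrg$, integrating by parts: $\int\partial_{xx}\rho\,\partial_x(\mob\xi)$.
\item From the kinetic term, the variation of the flux gives the $\frac{\welo}{\mob+\mu}\partial_t(\mob\xi)$ term.
\item From the kinetic term, the variation of the denominator gives the $\frac{\mob'}{2}\big(\frac{\welo}{\mob+\mu}\big)^2\partial_x(\mob\xi)$ term.
\item Differentiating the weight $e^{-t/\eps}/\eps$ in time produces the zeroth-order term $\frac{\mob}{\mob+\mu}\welo\xi$.
\end{itemize}
Rigorous differentiation requires $\mob$ to be Lipschitz together with the regularity from the previous step, so that each term is integrable. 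For instance, $\welo^2/(\mob+\mu)^2\le\mu^{-2}\welo^2$ is in $L^1$, and $\partial_x(\mob(\rho)\xi)$ is in $L^\infty$ in $x$ thanks to $\rho\in H^2\subset W^{1,\infty}$. Since $s$ may take either sign, we obtain equality \eqref{eq:chfinal}. Weak continuity of the minimizer and $\mrho_0=\bar\rho_0$ follow from the continuity equation with $\welo\in L^2$.

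The main obstacle is the heat-flow variation of the kinetic term under the $\mu$-modification. A sharp enough bound, controlled by $\eps/\mu^3$, is what gives the $H^2$ regularity on which everything else depends.
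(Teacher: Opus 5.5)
There is a genuine gap, and it sits at the step you flag as uncertain: you never obtain $\partial_t\mrho\in L^2_\loc$, and your fallback is circular.

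\textbf{What the heat-flow variation should give.} Your heat-flow variation only extracts the $H^2$ bound. You treat the variation of the kinetic term as an error to be absorbed, and you predict a $C\mu^{-1}$ loss of convexity. But $\mob+\mu$ is still concave, so nothing is lost there, and the kinetic variation is in fact a \emph{source} of two further coercive terms. After integrating by parts, it produces
\[
I_1=\eps\Psi^2\intom(\mob(\rhost)+\mu)\Big[\partial_x\Big(\frac{\welost}{\mob(\rhost)+\mu}\Big)\Big]^2\dd x\ge0,
\qquad
I_2=\eps\Psi^2\intom\frac{-\mob''(\rhost)}{2}\Big(\frac{\welost}{\mob(\rhost)+\mu}\Big)^2(\partial_x\rhost)^2\dd x .
\]
Using $-\mob''\ge1$, $|\mob'|\le1$ and $I_1\ge\frac\mu4 I_1$, these yield the bounds on $\frac{\eps\mu}8\|\partial_x\mwelo\|_{L^2}^2$ and $\frac\eps4\|\mwelo\,\partial_x\mrho\|_{L^2}^2$ in \eqref{eq:regularity}. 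The only real error term is the cutoff contribution $I_3$. It is controlled via $\mob(r)\ge r(1-r)/2$ and Lemma \ref{lem:villani}. The $\partial_x\mwelo$ bound gives $\partial_t\mrho=-\partial_x\mwelo\in L^2_\loc$ (Corollary \ref{cor:wxL2}) \emph{before} any Euler--Lagrange equation is available. Also, $\eps=o(\mu^3)$ plays no role in this theorem; it is only used for Theorem \ref{thm:chlimit}.

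\textbf{Why bootstrapping from \eqref{eq:chfinal} is circular.} Admissibility requires the flux $\welo^s=\mwelo+\int_0^s\partial_t\big(\mob(\rho^r)\,\xi e^{t/\eps}\big)\dd r$ exactly; first order is not enough. Its kinetic energy is finite only if $\partial_t\mob(\rho^r)\in L^2$, which essentially means $\partial_t\mrho\in L^2$. Otherwise $\widehat\wed^\eps(\rho^s,\welo^s)=+\infty$ and minimality yields nothing.

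\textbf{Integrability of \eqref{eq:chfinal} with your regularity.} Even granting admissibility, the terms are not integrable under your assumptions. The factor $\partial_t(\mob(\mrho)\xi)$ contains $\mob'(\mrho)\partial_x\mwelo$, which is not defined for $\mwelo\in L^2$ only. Likewise, $\|\partial_x\mrho_t\|_{L^\infty}\in L^2_t$ and $\|\mwelo_t\|_{L^2}^2\in L^1_t$ do not control $\int\!\!\int\mwelo^2|\partial_x\mrho|$. The paper needs exactly $\partial_x\mwelo,\ \mwelo\,\partial_x\mrho\in L^2$ here.

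\textbf{The transport perturbation itself.} The flow $\partial_s\rho^s+\partial_x(\mob(\rho^s)\Xi)=0$ is a scalar conservation law in $(s,x)$. Its classical lifespan depends on $\|\partial_x\mrho_t\|_\infty$, which is not uniform in $t$, so differentiating at $s=0$ is not justified. The paper instead adds $\delta\psi\,\partial_{xx}\rho^s$ and $\delta(\psi\,\partial_{xx}\welo^s-\psi'\partial_x\rho^s)$. This gives smooth, global-in-$s$ perturbations with values strictly in $(0,1)$. It then absorbs the errors in $A^s_t$ at $s>0$ with the coercive term $\delta\Psi^2B^s_t$. Next it passes to the limit $s_n\searrow0$ using \eqref{eq:thesearestrong}--\eqref{eq:thesearestrongweak}, which again rely on the $L^2$ bounds above. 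Only then does it send $\delta\to0$.
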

In contrast to Theorem \ref{thm:tfpde},
we are not able to obtain a weak second time derivative of $\rho$,
because of the incongruence between the strictly positive denominator $\mob(\rho)+\mu$
and the degenerate prefactor $\mob(\rho)$ of the test function $\xi$.
The closest feasible result would be a weak first time derivative of the quotient $\mob(\rho)\welo/(\mob(\rho)+\mu)$. 
Still, the passage to the limit $\eps\to0$ works as above:
\begin{theorem}
  \label{thm:chlimit}
  Any sequence with $\eps\searrow0$ contains a (non-relabeled) sub-sequence such that 
  \begin{align*}
    \mrho \to \lrho \quad \text{strongly in $L^2_\loc\big((0,\infty);H^1(\crc)\big)$ and weakly in $L^2_\loc\big((0,\infty);H^2(\crc)\big)$},
  \end{align*}
  with a limit $\lrho:[0,\infty)\to\prbr{[0,1]}$ that is weakly continuous in time with $\lrho(0)=\bar\rho_0$
  and satisfies the following weak form of \eqref{eq:intro-ch}:
  \begin{align}
    \label{eq:tf0weak_m}
    0 = \int_0^\infty\intom \big(-\lrho\,\partial_t\Phi+\partial_{xx}\lrho \,\partial_x(\mob(\lrho)\,\partial_x\Phi)\big)\dd x\dd t
  \end{align}
  for any test function $\Phi\in C^\infty_c\big((0,\infty)\times\crc\big)$.
\end{theorem}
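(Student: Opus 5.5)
The proof has four steps: (i) $\eps$-uniform a priori estimates, (ii) compactness, (iii) passing to the limit in \eqref{eq:chfinal} with a well-chosen test function, and (iv) identifying the initial datum.

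\textbf{A priori estimates.} Since $\eps=o(\mu^3)$, we may take $\mu=\mu(\eps)\to0$. The first estimate comes from comparing $\widehat\wed^\eps(\mrho,\mwelo)$ with the constant curve $\bar\rho_0$, which has zero kinetic energy. This gives $\widehat\wed^\eps(\mrho,\mwelo)\le\nrg(\bar\rho_0)$. The standard WED argument then yields an energy--dissipation bound: the averaged energy $t\mapsto\int_t^\infty\frac{e^{-(s-t)/\eps}}{\eps}\nrg(\mrho_s)\dd s$ is controlled by $\nrg(\bar\rho_0)$, and $\int_0^T\intom(\mwelo)^2/(\mob(\mrho)+\mu)$ is bounded uniformly in $\eps$ on compact time intervals. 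Second, the $L^2_\loc((0,\infty);H^2)$ bound for $\partial_{xx}\mrho$ is the central estimate announced in the introduction, obtained by varying along the heat flow. I would use it in the form already established for Theorem \ref{thm:chpde}, making sure its constants are uniform in $\eps$ on intervals $[\delta,T]$ once $2\eps<\delta$. This uses $-\mob''\ge1$, in the sense that the entropy-type functional $\intom G(\rho)$ with $G''=1/\mob$ decreases along the heat flow. Third, choosing $\xi=\partial_x\zeta$ in \eqref{eq:chfinal} shows that $\partial_t\mrho=-\partial_x\mwelo$ is bounded in $L^2_\loc$ with values in $(W^{1,\infty})^*$, since $|\mwelo|\le(\mob+\mu)^{1/2}\cdot\big(\mwelo/(\mob+\mu)^{1/2}\big)$ lies in $L^2$.

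\textbf{Compactness.} The bounds in $L^2_\loc(H^2)$ and on the time derivative in a dual space allow an Aubin--Lions argument. It gives strong convergence in $L^2_\loc((0,\infty);H^1)$ and weak convergence in $L^2_\loc(H^2)$, and also weak continuity of $\lrho$ together with $\lrho\in[0,1]$ and mass conservation. Since $H^1\subset C^0$ on $\crc$, $\mob(\mrho)\to\mob(\lrho)$ uniformly on a.e. time slice and strongly in $L^2_\loc$.

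\textbf{Limit in the equation.} I would test \eqref{eq:chfinal} with $\xi=\partial_x\Phi$, where $\Phi$ is supported in $(\delta,T)$ and $2\eps<\delta$. The first term, $\partial_{xx}\mrho\,\partial_x(\mob(\mrho)\partial_x\Phi)$, is weak times strong, since $\partial_x\mob(\mrho)=\mob'(\mrho)\partial_x\mrho$ converges strongly in $L^2$; it passes to the limit. For the second term I write $\frac{\mob}{\mob+\mu}\mwelo=\mwelo-\frac{\mu}{\mob+\mu}\mwelo$. The correction has $L^1$ norm at most $\mu^{1/2}\|\mwelo/(\mob+\mu)^{1/2}\|_{L^2}\to0$. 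The remaining piece $\int\mwelo\partial_x\Phi=-\int\mrho\,\partial_t\Phi$ follows from the continuity equation and gives $-\lrho\partial_t\Phi$ in the limit. This reproduces exactly \eqref{eq:tf0weak_m}.

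\textbf{Main obstacle.} The hard step is showing that the $\eps$-terms vanish. The first, $\eps\int\frac{\mwelo}{\mob+\mu}\partial_t(\mob\,\partial_x\Phi)$, involves $\mob'(\mrho)\partial_t\mrho\,\partial_x\Phi$. By Cauchy--Schwarz it is bounded by $\eps\mu^{-1/2}\|\mwelo/(\mob+\mu)^{1/2}\|_{L^2}\big(\|\partial_t\mrho\|_{L^2}+1\big)$. Here I need a bound on $\partial_t\mrho$ in $L^2$ that degenerates in $\eps$ no worse than $\mu^{-1/2}$. One route is $\partial_t\mrho=-\partial_x\mwelo$, with $\partial_x\mwelo$ controlled through the $\mu$-dependent regularity from Theorem \ref{thm:chpde}. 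The second term, with its quadratic factor, is bounded by $\eps\mu^{-1}\|\mwelo/(\mob+\mu)^{1/2}\|_{L^2}^2\|\partial_x(\mob\,\partial_x\Phi)\|_{L^\infty}$. This step is delicate because $\partial_x\mob(\mrho)$ is only in $L^\infty_x$ via $H^2\subset W^{1,\infty}$ in $L^2_t$. I would therefore use Hölder in time together with an $L^\infty_t$ bound on the kinetic energy density: the WED Euler--Lagrange equation gives $\frac{d}{dt}$ of the kinetic energy controlled by the energy, making it locally bounded. Both bounds become $o(1)$ precisely because $\eps=o(\mu^3)$. I expect that tracking the exact powers of $\mu$ in the regularity from Theorem \ref{thm:chpde} is the crux. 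Finally, $\lrho(0)=\bar\rho_0$ follows from the uniform $(W^{1,\infty})^*$-Hölder estimate $\|\mrho_t-\mrho_s\|\le C|t-s|^{1/2}$ near $t=0$, which comes from the kinetic bound weighted by $e^{-t/\eps}/\eps$ together with $\mrho_0=\bar\rho_0$.
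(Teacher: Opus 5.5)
Most of your outline is sound: the compactness argument, the limit of $\partial_{xx}\mrho\,\partial_x(\mob(\mrho)\partial_x\Phi)$, and the initial datum are fine. So is your treatment of $\frac{\mob(\mrho)}{\mob(\mrho)+\mu}\mwelo$ as $\mwelo-\frac{\mu}{\mob(\mrho)+\mu}\mwelo$ with the $\mu^{1/2}$ bound on the correction. That last step is more robust than arguing that the quotient tends to $1$ pointwise, which is not justified on $\{\lrho\in\{0,1\}\}$.

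\textbf{The gap.} It lies in the step you call the crux: showing that the quadratic $\eps$-term $\eps\int_0^T\intom\frac{\mob'(\mrho)}2\big(\frac{\mwelo}{\mob(\mrho)+\mu}\big)^2\partial_x(\mob(\mrho)\xi)$ vanishes.
\begin{itemize}
\item Your bound $\frac{\eps}{\mu}\|\partial_x(\mob(\mrho)\xi)\|_{L^\infty}\int\hat\kinet$ requires $\partial_x\mrho\in L^\infty_{t,x}$, which you do not have.
\item The $\eps$-uniform $L^\infty_t$ bound on the kinetic energy that you invoke to compensate is not available. The inner variation in time only gives \eqref{eq:almostfundamental}, $\partial_t[\nrg-\eps\hat\kinet]=-2\hat\kinet$, with $\nrg-\eps\hat\kinet$ nonnegative and nonincreasing.
\item This yields $\eps\hat\kinet(\mrhot,\mwelot)\le\nrg(\mrhot)$, a pointwise bound that costs a factor $1/\eps$. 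Since $\partial_t\nrg$ is uncontrolled, nothing like ``$\frac{d}{dt}\hat\kinet$ is controlled by the energy'' follows.
\item Your estimate, even if it held, would only need $\eps/\mu\to0$. So your computation does not support the claim that $\eps=o(\mu^3)$ is what makes it vanish.
\end{itemize}

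\textbf{The missing ingredient.} You need the $\eps$-weighted bounds in \eqref{eq:regularity} (Proposition \ref{prp:regularity}). The heat-flow variation that produces the $H^2$ estimate also gives, on $[2\eps,T-2\eps]$, bounds on $\eps\int\!\intom(\mwelo\,\partial_x\mrho)^2$ and $\eps\mu\int\!\intom(\partial_x\mwelo)^2$. These constants depend only on $T$ and $\nrg(\bar\rho_0)$. Here $-\mob''\ge1$ enters through the term $I_2$ of the kinetic variation, not through an entropy $\intom G(\rho)$ with $G''=1/\mob$.

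With these bounds, split $\partial_x(\mob(\mrho)\xi)=\mob(\mrho)\partial_x\xi+\mob'(\mrho)\partial_x\mrho\,\xi$. The first piece is $O(\eps)$ by \eqref{eq:kinetbound}, since $\mob/(\mob+\mu)\le1$. Cauchy--Schwarz bounds the second piece by
\[
\frac\eps2\|\xi\|_\infty\Big(\int_0^T\!\!\intom\frac{(\mwelo)^2}{\mob(\mrho)+\mu}\dd x\dd t\Big)^{1/2}\Big(\int_0^T\!\!\intom\frac{(\mwelo\,\partial_x\mrho)^2}{(\mob(\mrho)+\mu)^3}\dd x\dd t\Big)^{1/2}
\le\frac{\eps^{1/2}}{\mu^{3/2}}\|\xi\|_\infty\Big(\int_0^T\hat\kinet(\mrhot,\mwelot)\dd t\Big)^{1/2}\Big(\eps\int_0^T\!\!\intom(\mwelo\,\partial_x\mrho)^2\dd x\dd t\Big)^{1/2},
\]
which is $O\big((\eps/\mu^3)^{1/2}\big)$. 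This is exactly where $\eps=o(\mu^3)$ is used.

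\textbf{Your first $\eps$-term.} The same proposition also settles it. You get $\|\partial_t\mrho\|_{L^2}=\|\partial_x\mwelo\|_{L^2}\lesssim(\eps\mu)^{-1/2}$. That is far worse than the $\mu^{-1/2}$ you say you need, but it suffices: your Cauchy--Schwarz bound becomes $O(\eps^{1/2}/\mu)\to0$. Theorem \ref{thm:chpde} records only qualitative regularity and cannot supply these rates.
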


%%%%%%%%%%%%%%%%%%%%%%%%%%%%%%%%%%%%%%%%%%%%%%%%%%%%%%%
\subsection{Plan of the paper}
%%%%%%%%%%%%%%%%%%%%%%%%%%%%%%%%%%%%%%%%%%%%%%%%%%%%%%%
%
In Section \ref{sct:WED}, we prove existence of minimizers
of the WED functional in the specific context at hand,
and derive several general a priori estimates in analogy to \cite{RSSS},
like $\eps$-uniform integrability of kinetic and potential energy.
In Section \ref{sct:tf}, we specialize to the thin film case $\mob(\rho)=\rho$:
we obtain Euler-Lagrange equations and regularity estimates for the WED minimizers,
and finally pass to the limit $\eps\searrow0$.
In Section \ref{sct:ch}, the analogous program is carried out
in the Cahn-Hilliard case with uniformly concave $\mob$;
this is much more challenging on the technical level.

%%%%%%%%%%%%%%%%%%%%%%%%%%%%%%%%%%%%%%%%%%%%%%%%%%%%%%%
\subsection{Notation}
%%%%%%%%%%%%%%%%%%%%%%%%%%%%%%%%%%%%%%%%%%%%%%%%%%%%%%%
%
\begin{center}
\begin{tabular}{l l}
 $\crc$ & one-dimensional torus\\
 $\prb$ & non-negative Borel measures on $\crc$ of total mass $\mathrm{m}$ \\ 
 $\prbr{I}$ & absolutely continuous measures from $\prb$ with values in $I$\\
 $\BL$ &  bounded Lipschitz distance on $\prb$
\end{tabular}
\end{center}
% More precisely,
% \begin{align*}
%   \prbr{I}=\left\{ \rho\in L^1(\crc)\,\middle|\, \rho(x)\in I \text{ for almost every } x\in\crc, \intom\rho(x)\dd x=\mathrm{m}\right\}
% \end{align*}
% where $I=\Rnn$ in case $\mob(\rho)=\rho$ or $I=[0,1]$ corresponding to a more general concave mobility $\mob$.
% 
$\BL$ is defined for any $\mu_0,\mu_1\in\prb$ as
\begin{align*}
  \BL(\mu_0,\mu_1) = \sup\left\{\intom f\,\big(\mu_1-\mu_2\big)\dd x \,
  \middle|\, f\in C^{0,1}(\crc),\ |f(x')-f(x)|\le|x'-x|\ \text{for all $x,x'\in\crc$}\right\}.
\end{align*}
The bounded Lipschitz distance is a metric on $\prb$,
and the metric space $(\prb,\BL)$ is complete.
% For absolutely continuous $\rho_0,\rho_1\in\prbr{I}$ it is understood that
% \begin{align*}
%   \BL(\rho_0,\rho_1) = \BL(\rho_0\Leb,\rho_1\Leb). 
% \end{align*}

%%%%%%%%%%%%%%%%%%%%%%%%%%%%%%%%%%%%%%%%%%%%%%%%%%%%%%%
%%%%%%%%%%%%%%%%%%%%%%%%%%%%%%%%%%%%%%%%%%%%%%%%%%%%%%%
\section{The WED method for diffusion equations}
\label{sct:WED}
%%%%%%%%%%%%%%%%%%%%%%%%%%%%%%%%%%%%%%%%%%%%%%%%%%%%%%%
%%%%%%%%%%%%%%%%%%%%%%%%%%%%%%%%%%%%%%%%%%%%%%%%%%%%%%%
%
Throughout this section, the mobility function $\mob$ is either
defined on $I=[0,\infty)$ and of the simple linear form $\mob(r)=r$, or
defined on $I=[0,1]$, non-negative and strictly concave;
we \emph{do not} require degeneracy $\mob(0)=\mob(1)=0$ in this case.
This generalization will be important when we need to regularize the mobility
in the form $\mob+\mu$ with $\mu>0$ in Section \ref{sct:ch}.
In any case, the densities $\rho\in L^1(\crc)$ under consideration
are always subject to the condition $\rho(x)\in I$.

%%%%%%%%%%%%%%%%%%%%%%%%%%%%%%%%%%%%%%%%%%%%%%%%%%%%%%%
\subsection{WED functional}
%%%%%%%%%%%%%%%%%%%%%%%%%%%%%%%%%%%%%%%%%%%%%%%%%%%%%%%
%
Define the \emph{kinetic energy density} $\kinet:\prbr{I}\times L^1(\crc)\to\Rinf$ by
\begin{align*}
  \kinet(\rho,\welo)
  =\frac{1}{2} \intom \kfrac{\welo^2}{\mob(\rho)}\dd x,
  \quad\text{where}\quad
  \kfrac{w^2}{\mob(r)}
  =
  \begin{cases}
    \frac{w^2}{\mob(r)} & \text{if $\mob(r)>0$}, \\
    0 & \text{if $\mob(r)=0$ and $w=0$}, \\
    +\infty & \text{otherwise}.
    % \text{if $r=0$ and $w\neq0$}.
  \end{cases}
\end{align*}
\begin{remark}
\label{rmk:0by0}
The definition of $\kfrac{\cdot}{\cdot}:\Rnn\times\Rnn\to\Rinf$ is made such that
the expression $\kfrac{w^2}{\mob(r)}$ is lower semi-continuous and jointly convex in $(r,w)$;
here the concavity of $\mob$ is essential.
By definition, $\kinet(\rho,\welo)$ is non-negative, but possibly $+\infty$.
Finiteness $\kinet(\rho,\welo)<\infty$ implies
that $\rho\in I$ a.e.\ on $\crc$, and that $\welo=0$ a.e.\ on $\{\mob(\rho) = 0\}$.
We remark that there exists a lower semi-continuous extension of $\kinet$ to pairs $(\mu,\omega)$
of general Borel measures $\mu$ and Radon measures $\omega$ on $\crc$;
this is the starting point for the metric theory in \cite{DNS}.
\end{remark}
The rigorous definition of the energy $\nrg:\prbr{I}\to\Rinf$ from \eqref{eq:intro-Dirichlet} is
\begin{align*}
  \nrg(\rho) =
  \begin{cases}
    \frac12\intom \big(\partial_x\rho\big)^2\dd x & \text{if $\rho\in H^1(\crc)$}, \\
    +\infty & \text{otherwise}.
  \end{cases}
\end{align*}
This functional is convex and lower semi-continuous with respect to weak convergence.

For definition of the WED functional, we first need to introduce an appropriate class of admissible curves.
Specifically, for a given initial datum $\bar\rho_0\in\prbr{I}$ with $\nrg(\bar\rho_0)<\infty$,
we consider pairs $(\rho,\welo)$ of measurable maps $\Rnn\ni t\mapsto(\rho(t),\welo(t))\in L^1(\crc)\times L^1(\crc)$
with the following properties:
\begin{enumerate}
\item[(C1)] $\rho(t)\in\prbr{I}$ for each $t\ge0$,
\item[(C2)] $\Rnn\ni t\to\rho(t)$ is continuous in $\BL$, with $\rho(0)=\bar\rho_0$,
\item[(C3)] $\Rnn\ni t\to\welo(t)$ is locally integrable, and
\item[(C4)] the continuity equation \eqref{eq:strongcont} holds in the sense of distributions, 
i.e., for all $\theta\in C^1_c((0,\infty)\times\crc)$:
  \begin{align}
    \label{eq:weakcont}
%    \intom \theta(t_2;x)\rho(t_2;x)\dd x - \intom \theta(t_1;x)\rho(t_1;x)\dd x
    0
    = \int_0^\infty\intom\big(\partial_t\theta\,\rho+\partial_x\theta\,\welo\big)\dd x\dd t.
  \end{align}
  % with arbitrary $0\le t_1<t_2<\infty$.
\end{enumerate}
Accordingly, the class of admissible curves is defined by
\begin{align*}
  \curves(\bar\rho_0) = \left\{(\rho,\welo):\Rnn\to L^1(\crc)\times L^1(\crc)
  \,\middle|\,
  \text{$\rho$ and $\welo$ satisfy (C1) to (C4)} \right\},
\end{align*}
and on $\curves(\bar\rho_0)$, we define the WED functional $\wed^\eps:\curves(\bar\rho_0)\to\Rinf$ by
\begin{align*}
  \wed^\eps(\rho,\welo)
  = \int_0^\infty \big[\eps\kinet\big(\rho(t),\welo(t)\big) + \nrg\big(\rho(t)\big)\big] \dd\mu^\eps(t)
\end{align*}
where $\mu^\eps$ is a Borel measure on $(0, \infty)$ with density $\frac{e^{-t/\eps}}{\eps}$.

%%%%%%%%%%%%%%%%%%%%%%%%%%%%%%%%%%%%%%%%%%%%%%%%%%%%%%%
\subsection{Existence of minimizers}
%%%%%%%%%%%%%%%%%%%%%%%%%%%%%%%%%%%%%%%%%%%%%%%%%%%%%%%
%
\begin{proposition}
  \label{prp:x1}
  For any initial datum $\bar\rho_0\in \prbr{I}\cap H^1(\crc)$ and any $\eps>0$,
  there exists a minimizer $(\mrho,\mwelo)\in\curves(\bar\rho_0)$ of $\wed^\eps$.
\end{proposition}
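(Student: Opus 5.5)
We prove Proposition \ref{prp:x1} by the direct method of the calculus of variations on $\curves(\bar\rho_0)$. First, the infimum is finite: the constant curve $\rho(t)\equiv\bar\rho_0$, $\welo\equiv0$ is admissible, since (C1)–(C4) hold trivially. It gives $\wed^\eps=\nrg(\bar\rho_0)<\infty$, because $\mu^\eps$ is a probability measure. Moreover $\wed^\eps\ge0$. So we take a minimizing sequence $(\rho^n,\welo^n)\in\curves(\bar\rho_0)$ with $\wed^\eps(\rho^n,\welo^n)\le C$ for all $n$.

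\textbf{Step 1 (a priori bounds).} On any bounded interval $[0,T]$ the weight $e^{-t/\eps}/\eps$ is bounded below by $e^{-T/\eps}/\eps$. This gives bounds uniform in $n$ on
\[\int_0^T\kinet(\rho^n_t,\welo^n_t)\dd t \quad\text{and}\quad \int_0^T\nrg(\rho^n_t)\dd t .\]
Since $\intom\mob(\rho)\dd x$ is bounded (trivially if $I=[0,1]$, and equal to the mass $\mathrm m$ if $\mob(r)=r$), Cauchy–Schwarz yields
\[\int_0^T\|\welo^n_t\|_{L^1}\dd t\le\int_0^T\Big(2\kinet(\rho^n_t,\welo^n_t)\intom\mob(\rho^n_t)\dd x\Big)^{1/2}\dd t\le C_T .\]
The same argument applied to $\int_s^t$ gives the equicontinuity estimate $\BL(\rho^n_s,\rho^n_t)\le C_T|t-s|^{1/2}$. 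To obtain it, test \eqref{eq:weakcont} with a 1-Lipschitz $f$ times a cutoff in time, and use $\int_s^t\|\welo^n_r\|_{L^1}\dd r\le C_T|t-s|^{1/2}$.

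\textbf{Step 2 (compactness).} The set $\prb$ on the compact torus is compact in $\BL$. An Arzel\`a–Ascoli argument on $[0,T]$, followed by a diagonal argument in $T$, gives a subsequence with $\rho^n\to\mrho$ locally uniformly in time in $\BL$. Hence (C2) holds with $\mrho_0=\bar\rho_0$. For the fluxes, we view $\welo^n$ as measures on $[0,T]\times\crc$ with bounded total variation and extract a weak-$*$ limit $\omega$. The constraint $\rho\in I$ is closed under weak convergence, so $\mrho_t\in\prbr{I}$ for every $t$. Absolute continuity of $\mrho_t$ holds at least for a.e.\ $t$, where $\liminf_n\nrg(\rho^n_t)<\infty$ by Fatou. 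For the remaining $t$ we redefine $\mrho$ via the $\BL$-limit; this limit is a measure with density bounded by $1$ if $I=[0,1]$. In the thin-film case we instead use that $\mrho$ can be taken in $H^1$ for a.e.\ $t$, and we must check absolute continuity at every $t$. We handle this by noting that the $\BL$-continuous curve with finite kinetic energy inherits absolute continuity; failing that, we simply accept the representative.

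\textbf{Step 3 (lower semicontinuity and admissibility).} The map
\[(\rho,\omega)\mapsto\int\!\!\int\kfrac{\omega^2}{\mob(\rho)}\,e^{-t/\eps}\dd x\dd t\]
is jointly convex and lower semicontinuous under weak-$*$ convergence of measures, by Remark \ref{rmk:0by0} and the Reshetnyak/Benamou–Brenier type lsc results for its measure extension (as in \cite{DNS}). Its finiteness forces $\omega\ll\mrho\dd x\dd t$, with $\mwelo\in L^1_\loc$, which gives (C3). The energy term is lower semicontinuous by Fatou together with the weak lower semicontinuity of $\nrg$ at a.e.\ $t$. The continuity equation (C4) passes to the limit directly, since it is linear. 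Thus $(\mrho,\mwelo)\in\curves(\bar\rho_0)$ and $\wed^\eps(\mrho,\mwelo)\le\liminf_n\wed^\eps(\rho^n,\welo^n)$, so the limit is a minimizer.

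The main obstacle is Step 3, specifically the joint lower semicontinuity of the kinetic term along sequences converging only weakly as measures. It requires the convex-duality representation
\[\kfrac{w^2}{\mob(r)}=\sup_{a+\frac{b^2}{2}\mob'\text{-type constraint}}\{ar+bw\},\]
with the sup taken over suitable affine minorants; this is where the concavity of $\mob$ is used. It also requires checking that no mass of $\welo^n$ concentrates in the limit.
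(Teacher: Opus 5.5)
Your proof is correct in substance, but it handles the kinetic term by a different mechanism than the paper.

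\textbf{How the two routes differ.} The paper first upgrades the $\BL$-convergence of $\rho_n$ to strong convergence in $L^1((0,T)\times\crc)$. It does this with the generalized Aubin--Lions theorem (Theorem~\ref{thm:savare} with $\fnc=\nrg$, $g=\BL$), so that the weights $\eta_n=\frac{e^{-t/\eps}}{\eps}\mob(\rho_n)$ converge. It then applies \cite[Theorem 5.4.4]{AGS} to $\velo_n=\welo_n/\mob(\rho_n)\in L^2(\eta_n)$, which gives the limit flux $\mwelo=\mob(\mrho)\bar\velo^\eps$ and the lower semicontinuity at once.

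You skip strong compactness and use only narrow convergence of $\rho^n$ and weak convergence of the fluxes. You pay for this with the joint convexity of $(r,w)\mapsto\kfrac{w^2}{\mob(r)}$, i.e.\ the concavity of $\mob$, which is what the Benamou--Brenier/DNS-type lower semicontinuity needs. Your route is shorter and closer to the metric theory. The paper's step uses only continuity of $\mob$, so it survives for non-concave mobilities, at the cost of invoking the energy for compactness.

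\textbf{Your ``main obstacle'' is milder than you fear.} No duality formula is needed and no concentration can occur. For $E\subseteq[0,T]\times\crc$,
\[ \int_E|\welo^n|\dd x\dd t\le\Big(2\int_0^T\kinet(\rho^n_t,\welo^n_t)\dd t\Big)^{1/2}\Big(\int_E\mob(\rho^n)\dd x\dd t\Big)^{1/2}, \]
and the last factor is uniformly small in $|E|$. In the Cahn--Hilliard case $\mob(\rho^n)$ is bounded. In the thin-film case $\rho^n$ is bounded in $L^2((0,T)\times\crc)$ by the interpolation $\|\rho\|_{L^2}^2\le\nrg(\rho)+B$. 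Hence $\welo^n$ is equi-integrable and Dunford--Pettis gives weak $L^1$ compactness. Lower semicontinuity of the convex integral functional under weak convergence then follows from Fatou and Mazur.

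\textbf{One point in Step 2 is not settled.} In the thin-film case, (C1) requires $\mrho_t$ to be absolutely continuous for \emph{every} $t$, and neither of your justifications works.
\begin{itemize}
\item Finite kinetic energy alone does not give it: the static curve $\delta_{x_0}$ with zero flux is $\BL$-continuous and has zero action in the measure sense.
\item You cannot ``redefine'' $\mrho$ at exceptional times, because a $\BL$-continuous curve has only one continuous representative, so (C2) would be lost.
\end{itemize}
Any argument would have to combine the kinetic bound with the time-integrability of $\nrg(\mrho_t)$. For $I=[0,1]$ your reasoning is fine, since narrow limits of densities bounded by $1$ are again such densities. The paper's proof does not address this point either; it simply asserts that the limit curve is $\prbr{I}$-valued. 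So this is a loose end you share with the paper rather than a defect of your route.
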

\begin{remark}
  Existence of minimizers could alternatively be concluded
  by means of the abstract machinery from \cite{RSSS} from general properties of $\nrg$
  in combination with the characterization of transport metric with concave mobility $\mob$ \cite{DNS}.
  We give a more elementary proof here, which makes no use of the metric structure.
\end{remark}
\begin{proof}
  First note that the functional $\wed^\eps$ has a finite infimum:
  as integral over a non-negative quantity, it is bounded from below,
  and it is also proper, since the trivial curve $(\rho,\welo)\equiv(\bar\rho_0,0)$
  belongs to $\curves(\bar\rho_0)$ and has value $\wed^\eps(\bar\rho_0,0)=\nrg(\bar\rho_0)$,
  which is finite by hypothesis.
  Let $(\rho_n,\welo_n)$ be a minimizing sequence, and set
  \begin{align*}
    \underline\nrg := \lim_{n\to\infty}\wed^\eps(\rho_n,\welo_n) \ge0,
    \quad
    \overline\nrg := \sup_{n}\wed^\eps(\rho_n,\welo_n) < \infty.
  \end{align*}
  The goal of the rest of the proof is to show the existence
  of some curve $(\bar\rho^\eps,\bar\welo^\eps)\in\curves(\bar\rho_0)$
  --- appearing as limit of $(\rho_n,\welo_n)$ is a suitable sense ---
  which satisfies
  \begin{align}
    \label{eq:xgoal}
    \wed^\eps(\bar\rho^\eps,\bar\welo^\eps) \le \underline\nrg.
  \end{align}

  \emph{Construction of $\bar\rho^\eps$.}
  We shall prove that there is a subsequence $(\rho_{n_k})_k$
  and a $\BL$-continuous locally uniform limit curve $\bar\rho^\eps:\Rnn\to\prbr{I}$, i.e.,
  \begin{align}
    \label{eq:xBLconv}
    \lim_{n_k\to\infty}\sup_{t\in[0,T]}\BL\big(\rho_{n_k}(t),\bar\rho^\eps(t)\big) = 0
      \quad \text{for each $T>0$}.
  \end{align}
  For that curve $\bar\rho^\eps$, it will further follow that 
  \begin{align}
    \label{eq:epsL1conv}
    &\rho_{n_k}\to\bar\rho^\eps \quad\text{in $L^1\big((0,T)\times\crc\big)$, for each $T>0$, and} \\
    \label{eq:rhoconv2}
    &\int_0^\infty \nrg\big(\bar\rho^\eps(t)\big)\dd\mu^\eps(t)
      \le\liminf_{n_k\to\infty}\int_0^\infty \nrg\big(\rho_{n_k}(t)\big)\dd\mu^\eps(t).
  \end{align}
  The key to the above is to show that the curves $\Rnn\ni t\mapsto\rho_n(t)$
  are $n$-uniformly locally H\"older continuous in $\BL$,
  \begin{align}
    \label{eq:xBLhoelder}
    \BL\big(\rho_n(t_1),\rho_n(t_0)\big) \le C_T|t_1-t_0|^{1/2} \quad \text{for all $t_0,t_1\in[0,T]$},
  \end{align}
  with a constant $C_T$ just depending on $T>0$.
  Observe that
  \begin{align}
    \label{eq:wL1}
    \intom|\welo_n(t)|\dd x \le \Big[2\kinet\big(\rho_n(t),\welo_n(t)\big)\Big]^{1/2}\left[\intom \mob(\rho_n(t)) \dd x\right]^{1/2}\le \Big[2M\,\kinet\big(\rho_n(t),\welo_n(t)\big)\Big]^{1/2}
  \end{align}
  for some $M>0$ uniform in $n$:
  one may take $M=\mathrm{m}$ in the linear case $\mob(\rho)=\rho$,
  and $M=\|\mob\|_\infty^{1/2}$ in the bounded case.
  % Here we have used H\"older's inequality and definition of $\kinet$.
    Thanks to the above integrability of $\welo_n$, and since $\rho_n$ is weakly continuous with respect to time, we can follow standard arguments --- see e.g. \cite[Chapter 8]{AGS} ---
    to upgrade from the distributional form \eqref{eq:weakcont} of the continuity equation to a stronger form that admits in particular $C^{1,0}$-test functions over a compact time interval $[t_0,t_1]\subset[0,T]$ that do not necessarily vanish at the initial and terminal point.
    Choosing a time-independent test function $\phi\in C^{0,1}(\crc)$ on with $\|\partial_x\phi\|_{L^\infty}\le1$, we thus obtain, thanks to \eqref{eq:wL1}
  \begin{equation}
    \label{eq:BLbyK}
    \begin{split}
      \intom \phi\big(\rho_n(t_1)-\rho_n(t_0)\big)\dd x
      &= \int_{t_0}^{t_1}\intom\partial_x\phi\,\welo_n(t)\dd x\dd t \\
      & \le \int_{t_0}^{t_1}\|\partial_x\phi\|_{L^\infty}\intom \big|\welo_n(t)\big|\dd x\dd t \\
      & \le \int_{t_0}^{t_1}\big[2M\,\kinet\big(\rho_n(t),\welo_n(t)\big)\big]^{1/2}\dd t \\
      & \le (t_1-t_0)^{1/2}\left(2M\,\eps e^{T/\eps}\int_0^\infty\kinet\big(\rho_n(t),\welo_n(t)\big)\dd\mu^\eps(t)\right)^{1/2} \\
      &\le \big(2M\,\eps e^{T/\eps}\overline\nrg\big)^{1/2}(t_1-t_0)^{1/2},
    \end{split}
  \end{equation}
  and this is \eqref{eq:xBLhoelder}.
  % In particular,

  The locally uniform convergence \eqref{eq:xBLconv} to a continuous limit curve $\bar\rho^\eps:\Rnn\to(\prb,\BL)$
  is now obtained by means of the the Arzela-Ascoli theorem,
  see e.g.\ \cite[Proposition 3.3.1]{AGS} for an appropriate formulation in metric spaces.
  Indeed, \eqref{eq:xBLhoelder} implies equi-continuity of the curves $\rho_n$ on each interval $[0,T]$,
  and since $\crc$ is a compact set, the complete metric space $(\prb,\BL)$ is sequentially compact.
  From here, the convergence \eqref{eq:epsL1conv} in $L^1$ follows by interpolation
  of the locally uniform convergence \eqref{eq:xBLconv}
  with the $n$-uniform bound on $\rho_n$ in $L^2([0,T];H^1(\crc))$, i.e., from
  \begin{align*}
    \overline\nrg \ge \int_0^T\nrg(\rho)\dd\mu^\eps(t) \ge \frac{e^{-T/\eps}}{2\eps}\int_0^T\intom \big(\partial_x\rho\big)^2 \dd x \dd t.
  \end{align*}
  One can apply, for instance, a generalized version of the Aubin-Lions compactness theorem,
  see Theorem \ref{thm:savare} in the Appendix,
  using $\fnc:=\nrg$ and $g(\rho,\rho'):=\BL(\rho,\rho')$.
  This yields convergence in $L^1(\crc)$ in measure with respect to $t\in[0,T]$,  
  and since $\bar\rho^\eps$ is $t$-uniformly bounded in $L^1(\crc)$,
  also the strong convergence in $L^1((0,T)\times\crc)$.
  Moreover, since $\nrg$ is lower semi-continuous with respect to $\BL$-convergence,
  the limit in \eqref{eq:rhoconv2} follows by Fatou's lemma using that $\nrg$ is non-negative.
  The convergence with respect to $\BL$ implies in particular that
  \begin{align*}
    \bar\rho^\eps(0) = \lim_{n_k\to\infty}\rho_{n_k}(0) = \bar\rho_0.
  \end{align*}
  Thus, $\bar\rho^\eps$ satisfies properties (C1) and (C2).
  % 
  % For ease of notation, and without loss of generality,
  % we assume that the convergence \eqref{eq:xBLconv} is actually true for the entire sequence $(\rho_n)_{n\ge1}$.
  \medskip
     
  \emph{Construction of $\bar\welo^\eps$.}
  We shall now prove that there is a further subsequence $(\rho_{n_\ell},\welo_{n_\ell})_{\ell\ge1}$
  and a locally integrable limit $\bar\welo^\eps:\Rnn\to L^1(\crc)$
  such that
  \begin{align}
    \label{eq:weloconv1}
    &\welo_{n_\ell} \to \bar\welo^\eps \quad \text{as distribution on $\Rnn\times\crc$, and} \\
    \label{eq:weloconv2}
    &\int_0^\infty\kinet(\bar\rho^\eps(t),\bar\welo^\eps(t))\dd\mu^\eps(t)
    \le \liminf_{\ell\to\infty}\int_0^\infty\kinet(\rho_{n_\ell}(t),\welo_{n_\ell}(t))\dd\mu^\eps(t).
  \end{align}  
  Define an auxiliary density $\eta_n$ on $\Rnn\times\crc\to\Rnn$ by
  \begin{align*}
    \eta_n(t;x) = \frac{e^{-t/\eps}}{\eps}\mob(\rho_n(t;x)).
  \end{align*}
  From $L^1$-convergence \eqref{eq:epsL1conv} of $\rho_n$ in space and time,
  it follows that $\eta_n$ converges in each $L^1([0,T]\times\crc)$ to 
  % The convergence \eqref{eq:xBLconv} in combination with the tightness estimate
  % \begin{align*}
  %   \int_T^\infty \intom\dd\eta_n(t;x) = \int_T^\infty\intom \mob(\rho_n(t;x)) \frac{e^{-t/\eps}}{\eps}\dd x\dd t \le M\frac{e^{-T/\eps}}{\eps}
  %   \quad \text{for all $T>0$}
  % \end{align*}
  % with $M>0$ from \eqref{eq:wL1}
  % is sufficient to conclude that the narrow convergence of $\eta_n$
  % to the density $\eta^\eps$ given by
  \begin{align*}
    \eta^\eps(t;x) = \frac{e^{-t/\eps}}\eps \mob(\bar\rho^\eps(t;x)).
  \end{align*}  
  Further, introduce $\velo_n:\Rnn\times\crc\to\R$ by
  \begin{align*}
    \velo_n(t;x) =
    \begin{cases}
      \frac{\welo_n(t;x)}{\mob(\rho_n(t;x))} & \text{if $\mob(\rho_n(t;x))>0$}, \\
      0 & \text{if $\mob(\rho_n(t;x))=0$}.
    \end{cases}
  \end{align*}
  Since $\wed^\eps(\rho_n,\welo_n)\le\overline\nrg$,
  we have $\welo_n=0$ almost everywhere on $\{\mob(\rho_n)=0\}$,
  see Remark \ref{rmk:0by0},
  and thus $\welo_n = \mob(\rho_n)\velo_n$ a.e. on $\Rnn\times\crc$,
  and therefore also
  \begin{align*}
    \kfrac{\welo_n^2}{\mob(\rho_n)} = \velo_n^2\,\mob(\rho_n) \quad \text{a.e. on $\Rnn\times\crc$}.
  \end{align*}
  By definition of $\kinet$, we may thus conclude that
  \begin{align*}
    \overline\nrg
    \ge \int_0^\infty \kinet\big(\rho_n(t),\welo_n(t)\big)\dd\mu^\eps(t)
    = \frac12 \int_{\Rnn\times\crc}\velo_n^2(t;x)\dd\eta_n(t;x).
  \end{align*}
  The abstract convergence result \cite[Theorem 5.4.4]{AGS} for products of functions with measures
  provides the existence of a limiting function $\bar\velo^\eps\in L^2(\Rnn\times\crc;\eta^\eps)$
  and a subsequence $(\rho_{n_\ell},\welo_{n_\ell})_{\ell\ge1}$ such that
  \begin{align}
    \label{eq:weloconv1a}
    &\mob(\rho_{n_\ell})\,\velo_{n_\ell} \to \mob(\bar\rho^\eps)\,\bar\velo^\eps \quad\text{in the sense of distributions on  $\Rnn\times\crc$}, \\
    \label{eq:weloconv2a}
    &\int_{\Rnn\times\crc}\big(\bar\velo^\eps\big)^2\dd\eta^\eps \le \liminf_{\ell\to\infty}\int_{\Rnn\times\crc}\velo_{n_\ell}^2\dd\eta_{n_\ell}.
  \end{align}
  Define $\bar\welo^\eps:=\mob(\bar\rho^\eps)\,\bar\velo^\eps$, so in particular $\bar\welo^\eps=0$ on $\{\mob(\bar\rho^\eps)=0\}$.
  To verify (C3), i.e., that $\bar\welo^\eps\in \Lloc^1(\Rnn;L^1(\crc))$ for each $T>0$,
  simply observe that 
  \begin{align*}
    \int_0^T\intom |\bar\welo^\eps|\dd x\dd t
    &= \int_0^T\intom \mob(\bar\rho^\eps)|\bar\velo^\eps|\dd x\dd t \\
    &\le \eps e^{T/\eps}\int_{\Rnn\times\crc}|\bar\velo^\eps|\dd\eta^\eps 
    \le \eps e^{T/\eps} \left(M\int_{\Rnn\times\crc}\big(\bar\velo^\eps\big)^2\dd\eta^\eps\right)^{1/2}
      \le \eps e^{T/\eps}\sqrt{2M\,\overline\nrg}.
  \end{align*}
  by \eqref{eq:weloconv2a} just above.
  Now \eqref{eq:weloconv1} follows directly from \eqref{eq:weloconv1a} and the definition of $\velo$,
  and likewise, \eqref{eq:weloconv2} follows from \eqref{eq:weloconv2a},
  using that $\kfrac{(\bar\welo^\eps)^2}{\mob(\bar\rho^\eps)}=(\bar\velo^\eps)^2\,\mob(\bar\rho^\eps)$.
  \medskip
  
  \emph{Verification of (C4).}
  Each pair $(\rho_n,\welo_n)$ satisfies the distributional form \eqref{eq:weakcont} of the continuity equation.
  Since both $\rho_n$ and $\welo_n$ converge to their respective limits $\bar\rho^\eps$ and $\bar\welo^\eps$ as distributions,
  so we conclude \eqref{eq:weakcont} also for $(\mrho,\mwelo)$.
  \medskip
  
  \emph{Verification of \eqref{eq:xgoal}.}
  The limit property \eqref{eq:xgoal} is an immediate consequence
  of the estimates \eqref{eq:rhoconv2} and \eqref{eq:weloconv2},
  obtained in the construction of $\bar\rho^\eps$ and $\bar\welo^\eps$, respectively.
\end{proof}

%%%%%%%%%%%%%%%%%%%%%%%%%%%%%%%%%%%%%%%%%%%%%%%%%%%%%%%
\subsection{General a priori estimates}
%%%%%%%%%%%%%%%%%%%%%%%%%%%%%%%%%%%%%%%%%%%%%%%%%%%%%%%
%
The goal of this section is to prove the following a priori estimate that
holds in great generality for the WED approximations.
\begin{proposition}
  \label{prp:generalapriori}
  Let an initial datum $\bar\rho_0$ with $\nrg(\bar\rho_0)<\infty$,
  a finite time horizon $T>0$ be given.
  For any regularization $\eps>0$ and any global minimizer $(\bar\rho^\eps,\bar\welo^\eps)$ of $\wed^\eps$,
  the following bounds hold:
  \begin{align}
    \label{eq:trivialbound}
    \wed^\eps(\bar\rho^\eps,\bar\welo^\eps)&\le\nrg(\bar\rho_0), \\
    \label{eq:kinetbound}
    \int_0^T \kinet\big(\bar\rho^\eps_t,\bar\welo^\eps_t\big)\dd t &\le \frac12\nrg(\bar\rho_0), \\
    \label{eq:potenzbound}
    \int_0^T \big[\nrg\big(\bar\rho^\eps_t\big)-\nrg(\bar\rho_0)\big]\dd t &\le \frac\eps 2\nrg(\bar\rho_0).
  \end{align}
\end{proposition}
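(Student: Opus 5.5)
The plan is to compare the minimizer with explicit competitors in $\curves(\bar\rho_0)$, and then turn minimality on tail intervals into a differential inequality for a tail functional. First, \eqref{eq:trivialbound} follows at once from minimality against the stationary curve $(\bar\rho_0,0)$. As in the proof of Proposition \ref{prp:x1}, this curve is admissible and satisfies $\wed^\eps(\bar\rho_0,0)=\nrg(\bar\rho_0)$.

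Next I would localize this comparison in time. Write $K(t):=\kinet(\bar\rho^\eps_t,\bar\welo^\eps_t)$ and $E(t):=\nrg(\bar\rho^\eps_t)$. For $t_0\ge0$ define the tail functional
\begin{align*}
  G(t_0):=\int_{t_0}^\infty \frac{e^{-(t-t_0)/\eps}}{\eps}\big[\eps K(t)+E(t)\big]\dd t .
\end{align*}
Take any $t_0$ with $E(t_0)<\infty$ and consider the competitor that follows the minimizer on $[0,t_0]$ and is frozen at $\bar\rho^\eps_{t_0}$ with zero flux afterwards. It satisfies (C1)--(C4), since BL-continuity and the distributional continuity equation glue across $t_0$. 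Minimality then gives $e^{-t_0/\eps}G(t_0)\le e^{-t_0/\eps}E(t_0)$, that is,
\begin{align*}
  G(t_0)\le E(t_0)\quad\text{for a.e. } t_0 .
\end{align*}
Moreover, $G$ is absolutely continuous with $\eps G'=G-\eps K-E$ a.e. Hence $G'\le -K\le0$: the function $G$ is nonincreasing, $G\ge0$, and $G(0)=\wed^\eps(\bar\rho^\eps,\bar\welo^\eps)\le\nrg(\bar\rho_0)$.

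From this, integrating $-G'\ge K$ over $[0,T]$ gives $\int_0^T K\le G(0)-G(T)\le\nrg(\bar\rho_0)$. For the energy, monotonicity gives $G(t)\le G(0)\le\nrg(\bar\rho_0)$, so
\begin{align*}
  E(t)-\nrg(\bar\rho_0)\le E(t)-G(t)=-\eps G'(t)-\eps K(t)\le-\eps G'(t),
\end{align*}
and integrating yields $\int_0^T[E-\nrg(\bar\rho_0)]\le\eps\,\nrg(\bar\rho_0)$. Both bounds have the correct structure but are off by the factor $\tfrac12$.

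The main obstacle is recovering the sharp constant $\tfrac12$. To get it, I would additionally exploit minimality under time reparametrization of the tail, following \cite{RSSS}. For $\lambda>0$, replace the curve on $[t_0,\infty)$ by $\rho(t_0+\lambda(t-t_0))$ with flux $\lambda\welo(t_0+\lambda(t-t_0))$; this is admissible. Setting the derivative at $\lambda=1$ to zero gives the equipartition-type identity
\begin{align*}
  \int_{t_0}^\infty e^{-(t-t_0)/\eps}\Big[\Big(\tfrac{t-t_0}{\eps}-1\Big)\frac{\eps K+E}{\eps}+2K\Big]\dd t=0 .
\end{align*}
Combined with the ODE for $G$, this identity should express $\int_{t_0}^\infty e^{-(t-t_0)/\eps}K\,\dd t$ in terms of $G(t_0)-E(t_0)$ and its derivative. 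I expect it to improve the inequality $G'\le -K$ to $G'\le-2K$, which gives \eqref{eq:kinetbound}, and correspondingly to halve the constant in \eqref{eq:potenzbound}. Making this rigorous requires justifying differentiation in $\lambda$ under the integral, and I would use \eqref{eq:trivialbound} together with dominated convergence for that. I am uncertain that the bookkeeping produces exactly $\tfrac12$ in both estimates.
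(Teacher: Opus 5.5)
Your first steps are sound. The stationary competitor gives \eqref{eq:trivialbound}. The frozen-tail competitor gives $G\le E$ a.e., hence $G'\le -K$, and from this you correctly obtain \eqref{eq:kinetbound} and \eqref{eq:potenzbound} with constants $\nrg(\bar\rho_0)$ and $\eps\nrg(\bar\rho_0)$. That is a factor $2$ short of the statement, though. The step meant to produce the sharp constants is only announced (``should express'', ``I expect''), so as written the proposal does not prove the proposition.

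The frozen-tail comparison only yields the inequality $G\le E$. What is needed is the equality $G=E-\eps K$, and it follows from your reparametrization identity in two lines. Set $L(t_0):=\int_{t_0}^\infty e^{-(t-t_0)/\eps}K\dd t$ and $H(t_0):=\int_{t_0}^\infty e^{-(t-t_0)/\eps}\frac{t-t_0}{\eps}\,\frac{\eps K+E}{\eps}\dd t$; then your identity reads $G=H+2L$ for every $t_0$. By Fubini, $H(t_0)=\frac1\eps\int_{t_0}^\infty e^{-(s-t_0)/\eps}G(s)\dd s$, so $\eps H'=H-G$, while $\eps L'=L-\eps K$. Therefore
\begin{align*}
  \eps G' = \eps H' + 2\eps L' = (H+2L-G) - 2\eps K = -2\eps K ,
\end{align*}
and comparing with $\eps G'=G-\eps K-E$ gives $G=E-\eps K$ a.e. Now $2\int_0^T K\dd t=G(0)-G(T)\le G(0)=\wed^\eps(\bar\rho^\eps,\bar\welo^\eps)\le\nrg(\bar\rho_0)$. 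Since $G$ is nonincreasing, $E(t)-\nrg(\bar\rho_0)\le E(t)-G(t)=\eps K(t)$, which integrates to \eqref{eq:potenzbound}. The constants come out exactly as claimed. This is the paper's identity $\partial_t(\nrg-\eps\kinet)=-2\kinet$, and your $G$ is the continuous representative of its $f$.

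The justification you give for differentiating at $\lambda=1$ is also insufficient. For $\lambda>1$ the reparametrized tail carries the weight $e^{-(u-t_0)/(\lambda\eps)}$, which decays more slowly than $e^{-u/\eps}$. So \eqref{eq:trivialbound} does not even guarantee that the competitor has finite value, and minimality then gives no information from that side. (Even $H(t_0)<\infty$ is not implied by \eqref{eq:trivialbound} alone.) You can repair this with your own first step. Monotonicity of $G$ gives $\int_t^{t+1}(\eps K+E)\dd u\le\eps e^{1/\eps}G(t)\le\eps e^{1/\eps}\nrg(\bar\rho_0)$ for all $t\ge0$, which provides a dominating function for $\lambda\in[\frac12,2]$ and yields the two-sided derivative.

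The paper avoids this issue by using compactly supported time changes $t\mapsto t+s\sigma(t)$ with $\sigma\in C^\infty_c(\Rp)$. These need only local integrability and give $\partial_t(\nrg-\eps\kinet)=-2\kinet$ distributionally in one step. Nonnegativity of $f$ then comes from $e^{-t/\eps}f$ being nonincreasing and integrable, so no frozen-tail competitor is needed. In your route, that competitor serves only to control the tail.
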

The corresponding estimates in the Hilbertian setting are classical, see e.g. \cite{MielkeOrtiz,MielkeStefanelli}.
A sophisticated version for the context of abstract metric gradient flows is given in \cite[Corollary 4.4]{RSSS}.
Since our setting is more general than the first approach, but does not require the full machinery of the second,
we provide a proof of Proposition \ref{prp:generalapriori} by standards methods,
following the strategy from \cite{Simon}.
\begin{proof}
  The bound \eqref{eq:trivialbound} follows immediately
  by comparison of the minimal value $\wed^\eps(\bar\rho^\eps,\bar\welo^\eps)$
  with the value generated by the admissible curve $(\rho^*,\welo^*)$
  with $\rho^*_t=\bar\rho_0$ and $\welo^*_t=0$ for all $t>0$.
  Since $\kinet(\rho^*,\welo^*)\equiv0$, it follows that
  $\wed^\eps(\rho^*,\welo^*)=\nrg(\bar\rho_0)$.

  The proof of \eqref{eq:kinetbound} is more involved.
  We start by showing that, in the sense of distributions,
  \begin{align}
    \label{eq:almostfundamental}
    \partial_t\big[\nrg(\bar\rho^\eps)-\eps\kinet(\bar\rho^\eps,\bar\welo^\eps)\big] = -2\kinet(\bar\rho^\eps,\bar\welo^\eps).
  \end{align}
  This is obtained by means of the so-called inner variation in time:
  let $\sigma\in C^\infty_c(\Rp)$,
  then the map $T_s:\Rnn\to\Rnn$ with $T_s(t)=t+s\sigma(t)$ is a diffeomorphism
  for all sufficiently small $s>0$.
  Define accordingly a perturbation $(\rho^s,\welo^s)$ of the minimizers $(\bar\rho^\eps,\bar\welo^\eps)$ by
  \begin{align*}
    \rho^s(t;x) = \bar\rho^\eps\big(T_s^{-1}(t);x\big),
    \quad
    \welo^s(t;x) = \frac{\bar\welo^\eps\big(T_s^{-1}(t);x\big)}{\dot T_s\big(T_s^{-1}(t)\big)}.
  \end{align*}
  The perturbed curves still lie in $\curves(\bar\rho_0)$;
  in particular, the continuity equation is satisfied since, in weak sense,
  \begin{align*}
    \partial_t\rho^s(t;x)
    = \frac{\partial_t\bar\rho^\eps\big(T_s^{-1}(t);x\big)}{\dot T_s\big(T_s^{-1}(t)\big)}
    = - \frac{\partial_x\bar\welo^\eps\big(T_s^{-1}(t);x\big)}{\dot T_s\big(T_s^{-1}(t)\big)}
    = - \partial_x\welo^s(t;x).
  \end{align*}
  Plugging into $\wed^\eps$, we obtain via a change of variables $t=T_s(t')$:
  \begin{align*}
    F(s)&:=\wed^\eps\big(\rho^s,\welo^s\big) \\
    &= \int_0^\infty \big[\eps\kinet\big(\rho^s(t),\welo^s(t)\big) + \nrg\big(\rho^s(t)\big)\big]\dd\mu^\eps(t)\\
    &= \int_0^\infty \left[\frac\eps{\dot T_s^2}\kinet(\bar\rho^\eps,\bar\welo^\eps)+\nrg(\bar\rho^\eps)\right]\circ T_s^{-1}(t) \frac{e^{-t/\eps}}{\eps}\dd t \\
    &= \int_0^\infty \left[\frac\eps{\dot T_s(t')}\kinet\big(\bar\rho^\eps(t'),\bar\welo^\eps(t')\big)+\dot T_s(t')\,\nrg\big(\bar\rho^\eps(t')\big)\right]\frac{e^{-T_s(t')/\eps}}{\eps}\dd t'.
  \end{align*}
  Observe that $\dot T_s(t')=1+s\dot\sigma(t')$ and $\partial_s\exp(-T_s(t')/\eps)=-\exp(-T_s(t')/\eps)\sigma(t')/\eps$.
  By dominated convergence, $F(s)$ is continuously differentiable at $s=0$, with $F(0)=\wed^\eps(\bar\rho^\eps,\bar\welo^\eps)$, and,
  since $(\bar\rho^\eps,\bar\welo^\eps)$ is a minimizer,
  \begin{align*}
    0 = F'(0)
    = \int_0^\infty\left(
    \dot\sigma\big[ -\eps\kinet(\bar\rho^\eps,\bar\welo^\eps)+\nrg(\bar\rho^\eps)\big]
    -\frac{\sigma}{\eps} \big[\eps\kinet(\bar\rho^\eps,\bar\welo^\eps)+\nrg(\bar\rho^\eps)\big]\right)
    \frac{e^{-t'/\eps}}{\eps}\dd t'.
  \end{align*}
  This formula is simplified by introducing $\psi\in C^\infty_c(\Rp)$ by $\sigma(t') = \eps e^{t'/\eps}\psi(t')$.
  Since
  \begin{align*}
    \dot\sigma(t') = e^{t'/\eps}\big(\eps\,\dot\psi(t')+\psi(t')\big),
  \end{align*}
  we obtain
  \begin{align*}
    \int_0^\infty (-\dot\psi)\big[ \eps\kinet(\bar\rho^\eps,\bar\welo^\eps)-\nrg(\bar\rho^\eps)\big]\dd t'
    = 2\int_0^\infty\psi\, \kinet(\bar\rho^\eps,\bar\welo^\eps)\dd t',
  \end{align*}
  which is the weak formulation of \eqref{eq:almostfundamental}.
  
  Since $\kinet(\bar\rho^\eps,\bar\welo^\eps)$ is integrable with respect to $\mu^\eps$,
  and thus locally integrable with respect to the Lebesgue measure on $\Rnn$,
  the derivative is actually in $\Lloc^1(\Rnn)$.
  Hence, $f:=\nrg(\bar\rho^\eps)-\eps\kinet(\bar\rho^\eps,\bar\welo^\eps)\in \Wloc^{1,1}(\Rnn)$
  and in particular, we may assume that $f$ is continuous,
  and is non-increasing by \eqref{eq:almostfundamental}.
  Moreover,
  \begin{align*}
    \partial_t\big(e^{-t/\eps}f(t)\big)
    = \frac{e^{-t/\eps}}\eps \big( \eps\partial_tf(t) - f(t)\big)
    = -\frac{e^{-t/\eps}}\eps\big[\eps\kinet(\bar\rho^\eps,\bar\welo^\eps)+\nrg(\bar\rho^\eps)\big] \le 0
  \end{align*}
  in the sense of weak derivatives, thus $t\mapsto e^{-t/\eps}f(t)$ is continuous and non-increasing as well.
  Since
  \begin{align*}
    \frac1\eps\int_0^\infty e^{-t/\eps}\big|f(t)\big|\dd t \le \wed^\eps(\bar\rho^\eps,\bar\welo^\eps),
  \end{align*}
  the monotone function $t\mapsto e^{-t/\eps}f(t)$ is in $L^1(\Rnn)$.
  Thus, it is non-negative, with
  \begin{align*}
    e^{-t/\eps}f(t) \searrow 0 \quad\text{as $t\to\infty$},
  \end{align*}
  so that we may conclude non-negativity of $f$ itself. It holds trivially $f(0) \le \nrg(\bar\rho_0)$, thus for each $T>0$
  \begin{align*}
    2\int_0^T\kinet(\bar\rho^\eps_t,\bar\welo^\eps_t)\dd t
    = f(0) - f(T)
    \le \nrg(\bar\rho_0),
  \end{align*}
  which is \eqref{eq:kinetbound}.
  The proof of \eqref{eq:potenzbound} now follows easily using the monotonicity of $f$:
  the inequality $f(t)\le f(0)$ for any $t\ge0$ implies
  \begin{align*}
    \nrg\big(\bar\rho^\eps(t)\big) - \nrg(\bar\rho_0) \le \eps\kinet\big(\bar\rho^\eps(t),\bar\welo^\eps(t)\big) ,
  \end{align*}
  so integration with respect to $t\in[0,T]$ and an application of \eqref{eq:kinetbound} leads to \eqref{eq:potenzbound}.
\end{proof}

% %%%%%%%%%%%%%%%%%%%%%%%%%%%%%%%%%%%%%%%%%%%%%%%%%%%%%%%
% \subsection{Application of the WED regularization}
% %%%%%%%%%%%%%%%%%%%%%%%%%%%%%%%%%%%%%%%%%%%%%%%%%%%%%%%
% %
% The $\eps$-elliptic regulatization of \eqref{eq:tf} is given by
% \begin{align}
%   \label{eq:WEDtf}
%   -\eps\left(\partial_{tt}\rho+\partial_{xx}\left[\frac{\welo^2}{\rho}\right]\right)+\partial_t\rho
%   = -\partial_x\big(\rho\,\partial_{xxx}\rho\big).
% \end{align}
% Notice that for sufficiently smooth $\rho$, the fourth order operator can be rewritten in the form
% \begin{align*}
%   \partial_x\big(\rho\,\partial_{xxx}\rho\big)
%   = \partial_{xxx}\big( \rho\,\partial_x\rho) - \partial_{xx}\left(\frac32(\partial_x\rho)^2\right).
% \end{align*}

%%%%%%%%%%%%%%%%%%%%%%%%%%%%%%%%%%%%%%%%%%%%%%%%%%%%%%%
%%%%%%%%%%%%%%%%%%%%%%%%%%%%%%%%%%%%%%%%%%%%%%%%%%%%%%%
\section{Thin film case --- linear mobility}
\label{sct:tf}
%%%%%%%%%%%%%%%%%%%%%%%%%%%%%%%%%%%%%%%%%%%%%%%%%%%%%%%
%%%%%%%%%%%%%%%%%%%%%%%%%%%%%%%%%%%%%%%%%%%%%%%%%%%%%%%
In this section, we study the WED approximation to \eqref{eq:intro-ch}
with linear mobility $\mob(\rho)=\rho$, i.e.,
\begin{equation}
  \label{eq:hs}
  \partial_t \rho_t + \partial_x(\rho_t\, \partial_{xxx}\rho_t) =0.
\end{equation}
By homogeneity, there is no loss of generality in assuming unit mass, $\mathrm m=1$,
i.e., $\rho_t$ is a probability density for each $t\ge0$.
Throughout this section and the next, we use a subscript $t$ to indicate evaluation at time $t$. 

%%%%%%%%%%%%%%%%%%%%%%%%%%%%%%%%%%%%%%%%%%%%%%%%%%%%%%%
\subsection{Euler-Lagrange Equation}
%%%%%%%%%%%%%%%%%%%%%%%%%%%%%%%%%%%%%%%%%%%%%%%%%%%%%%%
%
\begin{proposition}
  \label{prp:tfepsweak}
  For a given $\eps>0$, let $(\mrho,\mwelo)$ be a corresponding minimizer of $\wed^\eps$.
  Then the following weak form of the $\eps$-regularized evolution equation \eqref{eq:tf0weak} holds:
  \begin{equation}
    \label{eq:tfepsweak}
    0= \int_0^{\infty}\intom\left(
      \eps\left\{-\partial_{tt}\Phi_t\,\mrho_t
      + \partial_{xx}\Phi_t\,\frac{(\mwelo_t)^2}{\mrho_t}\right\}
      - \partial_t\Phi_t\,\mrho_t
      - \frac{3}{2}\partial_{xx} \Phi_t\,(\partial_x \mrho_t)^2
      -\partial_{xxx}\Phi_t\,\mrho_t\,\partial_x\mrho_t
    \right) \dd x \dd t
  \end{equation}
  for any test function $\Phi\in C^\infty_c(\Rp\times\crc)$.
\end{proposition}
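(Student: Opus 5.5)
We derive \eqref{eq:tfepsweak} as the Euler--Lagrange equation of $\wed^\eps$ with respect to an \emph{outer variation}: the minimizer is transported along a smooth flow in space. Fix $\Phi\in C^\infty_c(\Rp\times\crc)$ and set $\zeta:=\partial_x\Phi$. For small $s$ consider the flow maps $X^s_t(x)=x+s\,\zeta_t(x)$, which are diffeomorphisms of $\crc$ for $|s|$ small, and define the perturbed curve by push-forward,
\begin{align*}
  \rho^s_t = (X^s_t)_\#\mrho_t,
  \qquad
  \welo^s_t = \Big[\big(\mwelo_t + s\,(\partial_t\zeta_t)\,\mrho_t\big)\cdot\partial_xX^s_t\Big]\circ (X^s_t)^{-1}\,\big/\,\partial_xX^s_t\circ(X^s_t)^{-1}.
\end{align*}
The push-forward of the flux $\mwelo_t+s\,\partial_t\zeta_t\,\mrho_t$ is exactly what makes $(\rho^s,\welo^s)$ satisfy the continuity equation. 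Indeed, the Eulerian velocity of a transported point picks up the extra contribution $s\,\partial_t\zeta$. Since $\zeta$ has compact support in time, $\rho^s_0=\bar\rho_0$. Mass and non-negativity are preserved, so $(\rho^s,\welo^s)\in\curves(\bar\rho_0)$. Since $\mwelo=0$ a.e.\ where $\mrho=0$ by Remark~\ref{rmk:0by0}, we may write $\mwelo=\mrho\,\bar{\mathbf v}$. In Lagrangian variables the kinetic integrand becomes
\[
\intom \frac{(\welo^s)^2}{\rho^s}\dd x=\intom \mrho\,\big(\bar{\mathbf v}\,\partial_xX^s+s\,\partial_t\zeta\big)^2\dd x ,
\]
where we use $\bar{\mathbf v}\circ$ transported: the velocity of $\rho^s$ is $\partial_tX^s+(\partial_xX^s)\bar{\mathbf v}$ evaluated at the preimage.

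The key steps are as follows. \emph{(i)} Compute the $s$-derivative at $s=0$ of the kinetic part. This gives
\[
\eps\int\!\!\int\big(\mrho\bar{\mathbf v}^2\partial_x\zeta+\mrho\bar{\mathbf v}\,\partial_t\zeta\big)\dd x\,\dd\mu^\eps
=\eps\int\!\!\int\Big(\tfrac{(\mwelo)^2}{\mrho}\partial_{xx}\Phi+\mwelo\,\partial_{tx}\Phi\Big)\dd x\,\dd\mu^\eps .
\]
\emph{(ii)} Compute the derivative of the energy. With $\rho^s\circ X^s\,\partial_xX^s=\mrho$ we get $\nrg(\rho^s)=\tfrac12\int (\partial_x\mrho/\partial_xX^s - \mrho\,\partial_{xx}X^s/(\partial_xX^s)^2)^2/\partial_xX^s$. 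Differentiating yields
\[
-\tfrac32\int(\partial_x\mrho)^2\partial_x\zeta-\int\mrho\,\partial_x\mrho\,\partial_{xx}\zeta ,
\]
which is exactly the thin-film nonlinearity. This needs $\mrho_t\in H^1$ only, which holds $\mu^\eps$-a.e.\ by \eqref{eq:trivialbound}. \emph{(iii)} Use $F'(0)=0$, which follows from minimality. Differentiability under the integral is justified by dominated convergence: all $s$-derivatives are bounded by $C(\kinet+\nrg)$ on the compact support of $\Phi$, which is integrable by Proposition~\ref{prp:generalapriori}.

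\emph{(iv)} The weight $e^{-t/\eps}/\eps$ is then removed as in the proof of Proposition~\ref{prp:generalapriori}. We apply the above with $\Phi$ replaced by $\eps e^{t/\eps}\Phi$, which is still admissible. Then $\partial_t(\eps e^{t/\eps}\Phi)=e^{t/\eps}(\eps\partial_t\Phi+\Phi)$, and the term $\eps\,\mwelo\,\partial_{tx}(\cdot)$ produces $\eps^2\mwelo\,\partial_{tx}\Phi+\eps\,\mwelo\,\partial_x\Phi$ against $\dd t$. Finally, use the continuity equation \eqref{eq:weakcont} with the test functions $\partial_t\Phi$ and $\Phi$. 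This converts $\int\!\!\int\mwelo\,\partial_{tx}\Phi=-\int\!\!\int\mrho\,\partial_{tt}\Phi$ and $\int\!\!\int\mwelo\,\partial_x\Phi=-\int\!\!\int\mrho\,\partial_t\Phi$. After dividing by $\eps$, all terms assemble to \eqref{eq:tfepsweak}, with signs to be checked against the convention $\partial_t\rho+\partial_x\welo=0$.

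The main obstacle is step (i)/(iii): making rigorous that the perturbed pair is admissible and that $s\mapsto\wed^\eps(\rho^s,\welo^s)$ is differentiable. The difficulty is that $\mrho$ only lies in $L^2_{\mathrm{loc}}H^1$ and $\mwelo$ only has finite weighted kinetic energy, while $\rho^s$ may vanish. I would handle this by working entirely in Lagrangian variables. The change of variables $y=X^s_t(x)$ turns both integrands into explicit polynomial/rational expressions in $s$ with coefficients $\mrho\bar{\mathbf v}^2$, $\mrho\bar{\mathbf v}$, $\mrho$, $(\partial_x\mrho)^2$, $\mrho\partial_x\mrho$, all of which are integrable. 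Smooth dependence on $s$ is then elementary. The continuity equation for the perturbed pair is verified by testing with $\theta\circ X^s$.
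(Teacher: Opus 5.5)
Your proposal follows the paper's proof essentially step by step. You push $\mrho_t$ forward along a spatial map generated by $\partial_x\Phi_t$: the paper uses the ODE flow of $\partial_x\varphi_t$, while you use the linear map $x+s\,\partial_x\Phi_t$, which only makes the $s$-dependence explicitly rational. You then correct the flux by the Eulerian velocity $\partial_tX^s_t$ of the map and compute the first variations of $\kinet$ and $\nrg$. Finally you substitute $\eps e^{t/\eps}\Phi$ and use the continuity equation with $\Phi$ and $\partial_t\Phi$, exactly as in \eqref{eq:tfpreweak}--\eqref{eq:phiPhi}.

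One formula needs correcting. As written, your definition of $\welo^s_t$ collapses to $(\mwelo_t+s\,\partial_t\zeta_t\,\mrho_t)\circ(X^s_t)^{-1}$. Testing with $\theta\circ X^s_t$ shows that this pair violates (C4) by the term $s^2\,\partial_t\zeta\,\partial_x\zeta\,\mrho$, so it is not an admissible competitor. The flux that actually satisfies the continuity equation is $\welo^s_t=\big(\mwelo_t+s\,\partial_t\zeta_t\,\mrho_t/\partial_xX^s_t\big)\circ(X^s_t)^{-1}$, as in the paper. This is also the flux that matches your own velocity $\partial_tX^s+\partial_xX^s\,\bar\velo$ and your Lagrangian kinetic integrand. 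Since the two formulas differ only at order $s^2$, your first-variation computations are unaffected.

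A smaller point concerns the domination in step (iii). Bound by $C(1+\kinet+\nrg)$ rather than $C(\kinet+\nrg)$, using $\intom\mrho_t\dd x=1$ and $\intom\mrho_t^2\dd x\le\nrg(\mrho_t)+B$. The second bound is needed for the $\mrho^2(\partial_{xx}X^s)^2$ term in the energy, which is missing from your list of coefficients.
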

The idea is to perform an inner variation of $(\mrho,\mwelo)$ in space, similarly as in \cite{JKO}.
To that end, let $\varphi\in C^\infty(\Rp\times\crc)$ be given;
in \eqref{eq:phiPhi} further below,
$\varphi$ will be related to the test function $\Phi$ in \eqref{eq:tfepsweak} above.
For any fixed $t>0$,
let $X_t^{(\cdot)}:\R\times\crc\to\crc$ be the flow map of $\partial_x\varphi_t$,
that is
\begin{align*}
  \partial_sX_t^s = \partial_x\varphi_t\circ X_t^s, \quad X_t^0 = \id.
\end{align*}
For later reference, we note that standard ODE arguments yield that
\begin{align}
  \label{eq:Xapprox}
  \partial_xX^s_t = 1+s\,\partial_{xx}\varphi_t + O(s^2),
  \quad
  \partial_{xx}X^s_t = s\,\partial_{xxx}\varphi_t + O(s^2),
  \quad
  \partial_tX^s_t = s\,\partial_t\partial_x\varphi_t + O(s^2),
\end{align}
where the implicit constants in $O(s^2)$ are uniformly bounded with respect to $s\in(0,1)$ and $t>0$,
thanks to smoothness and compact support of $\varphi$.
Next, define according pertubations $(\rho^s,\welo^s)$ of the minimizer $(\mrho,\mwelo)$ for $t>0$ by
\begin{align*}
  \rho^s_t:=X_t^s\#\mrho_t = \frac{\mrho_t}{\partial_xX_t^s}\circ(X_t^s)^{-1},\quad
  \welo^s_t:=\left(\mwelo_t+\partial_tX_t^s\frac{\mrho_t}{\partial_xX_t^s}\right)\circ(X_t^s)^{-1}. 
\end{align*}
\begin{lemma}
  For each $s\in\R$, we have $(\rho^s,\welo^s)\in\curves(\bar\rho_0)$.
\end{lemma}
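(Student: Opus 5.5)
We verify the four properties (C1)--(C4) for $(\rho^s,\welo^s)$ in turn, using that $\varphi$ (as needed for \eqref{eq:phiPhi}) has compact support in $\Rp\times\crc$. Then $X^s_t=\id$ for $t$ near $0$ and outside a compact time interval, and $X^s_t$ is a smooth diffeomorphism of $\crc$ depending smoothly on $(s,t)$. A key quantitative input is that $\partial_xX^s_t$ is bounded above and below by positive constants, uniformly in $t$ and in $s$ on compacts. This follows from $\partial_s\log\partial_xX^s_t=\partial_{xx}\varphi_t\circ X^s_t$, which gives $e^{-|s|\,\|\partial_{xx}\varphi\|_\infty}\le\partial_xX^s_t\le e^{|s|\,\|\partial_{xx}\varphi\|_\infty}$.

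\emph{(C1).} Since $\rho^s_t$ is the push-forward of $\mrho_t$ under a diffeomorphism, it is non-negative, absolutely continuous, and of mass $\mathrm m$. The set $I=\Rnn$ is preserved, so no constraint issue arises here.

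\emph{(C2).} For $t$ near $0$ we have $\rho^s_t=\mrho_t$, so the initial condition is kept. Continuity in $\BL$ follows by splitting
\[
\BL(\rho^s_{t},\rho^s_{t'})\le\BL\big(X^s_t\#\mrho_t,X^s_t\#\mrho_{t'}\big)+\BL\big(X^s_t\#\mrho_{t'},X^s_{t'}\#\mrho_{t'}\big).
\]
The first term is at most $\mathrm{Lip}(X^s_t)\,\BL(\mrho_t,\mrho_{t'})$, because composing a $1$-Lipschitz $f$ with $X^s_t$ changes its Lipschitz constant by a bounded factor. The second is at most $\mathrm m\,\|X^s_t-X^s_{t'}\|_\infty$, which tends to $0$ by smooth dependence on $t$.

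\emph{(C3).} Changing variables $y=X^s_t(x)$, $\dd y=\partial_xX^s_t\dd x$, gives
\[
\intom|\welo^s_t|\dd y\le \|\partial_xX^s_t\|_\infty\intom|\mwelo_t|\dd x+\|\partial_tX^s_t\|_\infty\,\mathrm m .
\]
This is locally integrable in $t$, and measurability in $t$ is inherited from that of $(\mrho,\mwelo)$ and the smoothness of $X$.

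\emph{(C4), the main step.} For $\theta\in C^1_c((0,\infty)\times\crc)$, define $\tilde\theta_t:=\theta_t\circ X^s_t$, which again lies in $C^1_c$. Change variables in $\int\!\!\int(\partial_t\theta\,\rho^s+\partial_x\theta\,\welo^s)$ using $\rho^s_t\,\dd y=\mrho_t\dd x$ and $\welo^s_t\circ X^s_t\,\partial_xX^s_t\dd x=\dd y$-measure. This yields
\[
\int_0^\infty\!\!\intom\Big((\partial_t\theta\circ X^s_t)\,\mrho_t+(\partial_x\theta\circ X^s_t)\big(\mwelo_t\,\partial_xX^s_t+\partial_tX^s_t\,\mrho_t\big)\Big)\dd x\dd t .
\]
The chain rule gives $\partial_t\tilde\theta=\partial_t\theta\circ X+\partial_x\theta\circ X\,\partial_tX$ and $\partial_x\tilde\theta=\partial_x\theta\circ X\,\partial_xX$. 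The integrand therefore equals $\partial_t\tilde\theta\,\mrho+\partial_x\tilde\theta\,\mwelo$, whose integral vanishes by (C4) for $(\mrho,\mwelo)$.

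The only delicate point is that $\tilde\theta$ is merely $C^1$ if $\theta$ is, which is exactly the regularity \eqref{eq:weakcont} allows. Since $X$ is smooth, compact support is preserved.
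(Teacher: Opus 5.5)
Your proposal is correct and follows the paper's proof. For (C4) you use exactly the same change of variables $x\mapsto X^s_t(x)$, followed by the chain-rule identification of the integrand with $\partial_t(\theta\circ X^s)\,\mrho+\partial_x(\theta\circ X^s)\,\mwelo$. You also spell out (C1)--(C3) via the two-sided bound on $\partial_xX^s_t$, the $\BL$ splitting and the $L^1$ estimate on $\welo^s_t$, which the paper simply dismisses as inherited from the smoothness and compact support of $\varphi$.
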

\begin{proof}
  By smoothness and compact support of $\varphi$, it is easily seen that
  properties (C1)--(C3) are inherited from $(\mrho,\mwelo)$ to the perturbation $(\rho^s,\welo^s)$.
  It remains to verify (C4):
  for a test function $\theta\in C^\infty_c(\Rp\times\crc)$,
  \begin{align*}
    \int_0^\infty\intom \big(\partial_t\theta_t\,\rho^s_t+\partial_x\theta_t\,\welo^s_t\big)\dd x\dd t
    &= \int_0^\infty\intom \big(\partial_t\theta_t\,\rho^s_t+\partial_x\theta_t\,\welo^s_t\big)\circ X^s_t\,\partial_xX^s_t\dd x\dd t \\
    &= \int_0^\infty\intom \big(\partial_t\theta_t\circ X^s_t\,\mrho_t + \partial_x\theta_t\circ X^s_t\,(\partial_xX^s_t\,\mwelo_t + \partial_tX^s_t\,\mrho_t)\big)\dd x\dd t \\
    &= \int_0^\infty\intom \big( \partial_t(\theta_t\circ X^s_t)\,\mrho_t + \partial_x(\theta_t\circ X^s_t)\,\mwelo_t\big)\dd x\dd t
      = 0,
  \end{align*}
  using that $(\mrho,\mwelo)$ satisfies (C4),
  and since $\theta\circ X^s\in C^\infty_c(\Rp\times\crc)$.
\end{proof}
\begin{lemma}
  There is a remainder $\hat R^s_t$, which is uniformly bounded for $s\in(0,1)$ and $t>0$,
  such that at almost every $t>0$,
  \begin{align}
    \label{eq:tfvarykinet}
    \frac{\kinet(\rho^s_t,\welo^s_t)-\kinet(\mrho_t,\mwelo_t)}s
    = \intom \left(\partial_{xx}\varphi_t\,\frac{(\mwelo_t)^2}{\mrho_t}+\partial_t\partial_x\varphi_t\,\mwelo_t\right)\dd x
    + s\big[1+\kinet(\mrho_t,\mwelo_t)\big]\,\hat R^s_t
  \end{align}
  holds for all $s\in(0,1)$.
\end{lemma}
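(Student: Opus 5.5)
The plan is to compute $\kinet(\rho^s_t,\welo^s_t)$ exactly by a change of variables $y=X^s_t(x)$, which pulls everything back to the unperturbed pair. Since $\rho^s_t\circ X^s_t=\mrho_t/\partial_xX^s_t$ and $\welo^s_t\circ X^s_t=\mwelo_t+\partial_tX^s_t\,\mrho_t/\partial_xX^s_t$, and $\dd y=\partial_xX^s_t\dd x$, we get
\begin{align*}
  2\kinet(\rho^s_t,\welo^s_t)
  = \intom \frac{\big(\partial_xX^s_t\,\mwelo_t+\partial_tX^s_t\,\mrho_t\big)^2}{\partial_xX^s_t\,\mrho_t}\,\partial_xX^s_t\dd x
  = \intom\left(\partial_xX^s_t\,\kfrac{(\mwelo_t)^2}{\mrho_t} + 2\partial_tX^s_t\,\mwelo_t + \frac{(\partial_tX^s_t)^2}{\partial_xX^s_t}\,\mrho_t\right)\dd x .
\end{align*}
This identity is exact. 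The only care needed is on $\{\mrho_t=0\}$, where $\mwelo_t=0$ a.e.\ because $\kinet(\mrho_t,\mwelo_t)<\infty$ for a.e.\ $t$ (Remark \ref{rmk:0by0} and Proposition \ref{prp:generalapriori}). There the integrand is $0$ on both sides, consistent with the convention for $\kfrac{\cdot}{\cdot}$. We restrict to those a.e.\ $t$ at which $\kinet$ is finite; the statement is trivial otherwise.

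Next I subtract $2\kinet(\mrho_t,\mwelo_t)$, divide by $2s$, and insert the expansions \eqref{eq:Xapprox}. Write $\partial_xX^s_t=1+s\,\partial_{xx}\varphi_t+s^2a^s_t$ and $\partial_tX^s_t=s\,\partial_t\partial_x\varphi_t+s^2b^s_t$, with $a^s_t,b^s_t$ bounded uniformly in $s\in(0,1)$, $t>0$, $x$. This gives
\[ \frac{\kinet(\rho^s_t,\welo^s_t)-\kinet(\mrho_t,\mwelo_t)}{s}
 = \frac12\intom\Big((\partial_{xx}\varphi_t+s\,a^s_t)\kfrac{(\mwelo_t)^2}{\mrho_t} + 2(\partial_t\partial_x\varphi_t+s\,b^s_t)\mwelo_t + s\,\frac{(\partial_t\partial_x\varphi_t+s b^s_t)^2}{\partial_xX^s_t}\mrho_t\Big)\dd x. \]
The leading terms give $\frac12\intom\partial_{xx}\varphi_t\,(\mwelo_t)^2/\mrho_t\dd x+\intom\partial_t\partial_x\varphi_t\,\mwelo_t\dd x$. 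Note the factor $\tfrac12$ from the definition of $\kinet$: the displayed main term in \eqref{eq:tfvarykinet} should carry it, or equivalently $\varphi$ is normalized so that it is absorbed. I will keep the factor consistently and only adjust constants. Everything else is $s$ times a remainder, which must be bounded by $C[1+\kinet(\mrho_t,\mwelo_t)]$.

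The three remainder pieces are handled as follows.
\begin{itemize}
\item The first is $\frac12\intom a^s_t\,(\mwelo_t)^2/\mrho_t\dd x\le\|a\|_\infty\kinet(\mrho_t,\mwelo_t)$.
\item The second, $\intom b^s_t\mwelo_t\dd x$, is bounded by $\|b\|_\infty\|\mwelo_t\|_{L^1}\le\|b\|_\infty\sqrt{2\kinet}$ via \eqref{eq:wL1} with unit mass. This in turn is at most $\|b\|_\infty(1+\kinet)$.
\item The third uses that $\partial_xX^s_t\ge1/2$ for $s$ small. Indeed, $\partial_xX^s_t$ is the exponential of $\int_0^s\partial_{xx}\varphi_t\circ X^\sigma_t\dd\sigma$, hence uniformly bounded away from zero for $s\in(0,1)$. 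The piece is then bounded by $C\intom\mrho_t\dd x=C$.
\end{itemize}
Collecting these defines $\hat R^s_t$ with $|\hat R^s_t|\le C$, independent of $s\in(0,1)$ and $t$.

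The main obstacle is justifying uniformity of \eqref{eq:Xapprox}, and of the lower bound on $\partial_xX^s_t$, in $t$. Since $\varphi$ is smooth with compact support in time, these follow from the linearized flow equations: $\partial_s\partial_xX=\partial_{xx}\varphi\circ X\,\partial_xX$ and $\partial_s\partial_tX=(\partial_t\partial_x\varphi)\circ X+\partial_{xx}\varphi\circ X\,\partial_tX$, together with Gronwall. These give explicit $C^2$ bounds on $X$ depending only on finitely many derivatives of $\varphi$, so I expect this step to be routine.
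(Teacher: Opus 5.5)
Your strategy is the paper's: pull back by $X^s_t$, expand with \eqref{eq:Xapprox}, and absorb the $O(s)$ terms into $1+\kinet(\mrho_t,\mwelo_t)$ using unit mass and $\|\mwelo_t\|_{L^1}\le\sqrt{2\kinet(\mrho_t,\mwelo_t)}$. However, the central computation contains an error that changes the result.

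Your first expression for $2\kinet(\rho^s_t,\welo^s_t)$ is correct. Because the Jacobian cancels the $\partial_xX^s_t$ in the denominator, it equals $\intom\big(\partial_xX^s_t\,\mwelo_t+\partial_tX^s_t\,\mrho_t\big)^2/\mrho_t\dd x$. The expanded integrand you then write is instead that of $\big(\partial_xX^s_t\,\mwelo_t+\partial_tX^s_t\,\mrho_t\big)^2/(\partial_xX^s_t\,\mrho_t)$. The trailing factor $\partial_xX^s_t$ was dropped, so your ``exact'' identity is false. The correct expansion is
\begin{align*}
  2\kinet(\rho^s_t,\welo^s_t)
  = \intom\left((\partial_xX^s_t)^2\kfrac{(\mwelo_t)^2}{\mrho_t} + 2\,\partial_xX^s_t\,\partial_tX^s_t\,\mwelo_t + (\partial_tX^s_t)^2\,\mrho_t\right)\dd x .
\end{align*}
Since $(\partial_xX^s_t)^2=1+2s\,\partial_{xx}\varphi_t+O(s^2)$, the factor $2$ cancels the $\frac12$ in the definition of $\kinet$. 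The leading term is therefore exactly the one in \eqref{eq:tfvarykinet}, with coefficient $1$ in front of $\partial_{xx}\varphi_t\,(\mwelo_t)^2/\mrho_t$.

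Consequently, your conclusion that the statement should carry an extra $\frac12$ on that term is wrong. The repair you suggest could not work in any case: both terms of the main part are linear in $\varphi$, so no normalization of $\varphi$ changes their relative weight. ``Adjusting constants'' would simply produce a different Euler--Lagrange equation. The coefficient $1$ is what yields $\eps\,\partial_{xx}\big((\mwelo)^2/\mrho\big)$ in \eqref{eq:tffinal}. This agrees with \eqref{eq:ETR-intro} for $\mob'\equiv1$: with $v=\welo/\rho$ and $\partial_t\rho+\partial_x\welo=0$, one has $\partial_x\big(\rho\,\partial_t v+\rho\, v\,\partial_x v\big)=-\partial_{tt}\rho+\partial_{xx}(\rho v^2)$.

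Once the Jacobian is restored, your remainder estimates go through as written. The third piece even simplifies to $\frac1{2s}\intom(\partial_tX^s_t)^2\mrho_t\dd x=O(s)$ by unit mass, so the lower bound on $\partial_xX^s_t$ is no longer needed.
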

\begin{proof}
  Let $t>0$ be such that $\kinet(\mrho_t,\mwelo_t)<\infty$, i.e., $(\mwelo_t)^2/\mrho_t\in L^1(\crc)$.
  For any $s\in(-1,1)$, a change of variables $x=X_t^s(y)$ yields
  \begin{align*}
    \kinet(\rho^s_t,\welo^s_t)
    &= \intom \frac{(\welo^s_t)^2}{2\rho_t^s}\dd y \\
    &= \intom \frac{\big(\welo^s_t\circ X_t^s\big)^2}{2\rho_t^s\circ X_t^s}\partial_xX_t^s\dd y \\
    &= \intom \frac{\big(\partial_xX_t^s\,\mwelo_t+\partial_tX_t^s\mrho_t\big)^2}{2\mrho_t} \dd x \\
    &=\intom  \left\{(\partial_xX_t^s)^2\frac{(\mwelo_t)^2}{2\mrho_t}
      + \frac12(\partial_tX_t^s)^2\mrho_t
      + \partial_tX_t^s\,\partial_xX_t^s\,\mwelo_t\right\}\dd x.
  \end{align*}
  The expansion \eqref{eq:Xapprox} implies
  \begin{align*}
    (\partial_xX^s_t)^2 - 1 = 2s\,\partial_{xx}\varphi_t+O(s^2),
    \quad
    (\partial_tX^s_t)^2 = O(s^2),
    \quad
    \partial_tX^s_t\,\partial_xX_t^s = s\,\partial_t\partial_x\varphi_t + O(s^2),
  \end{align*}
  again with implicit constants that are uniformly bounded with respect to $s\in(0,1)$ and $t>0$.
  We thus obtain
  \begin{align*}
    &\left|\frac{\kinet(\rho^s_t,\welo^s_t)-\kinet(\mrho_t,\mwelo_t)}s
    - \intom \left(\partial_{xx}\varphi_t\,\frac{(\mwelo_t)^2}{\mrho_t}+\partial_t\partial_x\varphi_t\,\mwelo_t\right)\dd x \right| \\
    &\qquad = \left| \intom \left(O(s) \frac{(\mwelo_t)^2}{\mrho_t} + O(s)\mrho_t + O(s) \welo_t^s\right)\dd x\right|
    \le Cs\big[1+\kinet(\mrho_t,\mwelo_t)\big],
  \end{align*}
  with some constant $C$ that depends on $\varphi$ but is uniform in $s\in(0,1)$ and $t>0$.
  Above, we have used that $\mrho_t$ has unit mass and
  \begin{align*}
    \intom |\mwelo_t|\dd x
    \le \left(\intom \frac{(\mwelo_t)^2}{\mrho_t}\dd x\right)^{1/2}\left(\intom\mrho_t\dd x\right)^{1/2}
    \le \kinet(\mrho_t,\mwelo_t) + 1.
  \end{align*}
  Since $\kinet(\mrho_t,\mwelo_t)<\infty$ for almost every $t>0$, the claim follows.
\end{proof}
\begin{lemma}
  There is a remainder $\check R^s_t$, which is uniformly bounded for $s\in(0,1)$ and $t>0$,
  such that at almost every $t>0$,
  \begin{align}
    \label{eq:tfvarynrg}
    \frac{\nrg(\rho^s_t)-\nrg(\mrho_t)}s
    = -\intom \left(\frac32\partial_{xx}\varphi_t\,\big(\partial_x\mrho_t\big)^2+\partial_{xxx}\varphi_t\,\mrho_t\,\partial_x\mrho_t\right)\dd x
    + s\big[1+\nrg(\mrho_t)\big]\,\check R^s_t
  \end{align}
  holds for all $s\in(0,1)$.
\end{lemma}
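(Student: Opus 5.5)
We proceed in parallel to the proof of \eqref{eq:tfvarykinet}. Fix $t>0$ with $\nrg(\mrho_t)<\infty$, i.e.\ $\mrho_t\in H^1(\crc)$. Since $\rho^s_t\circ X^s_t=\mrho_t/\partial_xX^s_t$, differentiating in $y$ and using the chain rule gives
\begin{align*}
  (\partial_x\rho^s_t)\circ X^s_t
  = \frac{1}{\partial_xX^s_t}\,\partial_x\!\left(\frac{\mrho_t}{\partial_xX^s_t}\right)
  = \frac{\partial_x\mrho_t}{(\partial_xX^s_t)^2}-\frac{\mrho_t\,\partial_{xx}X^s_t}{(\partial_xX^s_t)^3}.
\end{align*}
Changing variables $x=X^s_t(y)$ in $\nrg(\rho^s_t)$ yields an exact representation in terms of $\mrho_t$:
\begin{align*}
  \nrg(\rho^s_t)=\frac12\intom\left(\frac{\partial_x\mrho_t}{(\partial_xX^s_t)^{3/2}}-\frac{\mrho_t\,\partial_{xx}X^s_t}{(\partial_xX^s_t)^{5/2}}\right)^2\dd x .
\end{align*}
For $|s|<1$ (after possibly shrinking the admissible range, or using the uniform bounds on the flow), $\partial_xX^s_t$ is bounded away from $0$ uniformly in $t$, so every coefficient is smooth in $s$ with uniformly controlled expansions.

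We expand the squared integrand using \eqref{eq:Xapprox}. The term $(\partial_xX^s_t)^{-3}(\partial_x\mrho_t)^2$ contributes $(1-3s\,\partial_{xx}\varphi_t+O(s^2))(\partial_x\mrho_t)^2$. The cross term $-2(\partial_xX^s_t)^{-4}\partial_{xx}X^s_t\,\mrho_t\partial_x\mrho_t$ contributes $-2s\,\partial_{xxx}\varphi_t\,\mrho_t\partial_x\mrho_t+O(s^2)\mrho_t|\partial_x\mrho_t|$. The last term $(\partial_{xx}X^s_t)^2\mrho_t^2(\partial_xX^s_t)^{-5}$ is $O(s^2)\mrho_t^2$. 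After the factor $\frac12$, dividing by $s$ gives exactly the stated first-order term
\begin{align*}
  -\frac32\partial_{xx}\varphi_t(\partial_x\mrho_t)^2-\partial_{xxx}\varphi_t\,\mrho_t\partial_x\mrho_t,
\end{align*}
plus a remainder bounded by $Cs\intom\big((\partial_x\mrho_t)^2+\mrho_t|\partial_x\mrho_t|+\mrho_t^2\big)\dd x$. The implicit constants come from \eqref{eq:Xapprox} and are uniform in $s\in(0,1)$ and $t>0$.

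The main obstacle is controlling this remainder by $1+\nrg(\mrho_t)$. Here we use that $\mrho_t$ is a probability density in $H^1(\crc)$. Since it has unit mass, $\mrho_t$ attains a value $\le1$ somewhere, so
\begin{align*}
  \|\mrho_t\|_{L^\infty}\le 1+\|\partial_x\mrho_t\|_{L^1}\le 1+\sqrt{2\nrg(\mrho_t)}.
\end{align*}
Hence $\intom\mrho_t^2\dd x\le\|\mrho_t\|_{L^\infty}\le C(1+\nrg(\mrho_t))$. By Cauchy--Schwarz, $\intom\mrho_t|\partial_x\mrho_t|\dd x\le\tfrac12\intom\mrho_t^2\dd x+\nrg(\mrho_t)$ is bounded likewise. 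Defining $\check R^s_t$ as the remainder divided by $s[1+\nrg(\mrho_t)]$ then gives a uniformly bounded quantity. Because $\nrg(\mrho_t)<\infty$ for a.e.\ $t$ by Proposition~\ref{prp:generalapriori}, this proves the claim.
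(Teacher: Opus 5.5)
Your proposal is correct and follows the paper's proof essentially step by step: the same change of variables $x=X^s_t(y)$, the same exact three-term representation of $\nrg(\rho^s_t)$, the same first-order expansion via \eqref{eq:Xapprox}, and the same control of the remainder by $1+\nrg(\mrho_t)$. The only cosmetic difference is that you bound $\intom\mrho_t^2\dd x$ explicitly through $\|\mrho_t\|_{L^\infty}\le 1+\sqrt{2\nrg(\mrho_t)}$ using unit mass, whereas the paper cites the interpolation inequality \eqref{eq:L2H1L1}; both give the same conclusion.
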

\begin{proof}
  Let $t>0$ be such that $\nrg(\mrho_t)<\infty$, i.e., $\partial_x\mrho_t\in L^2(\crc)$.
  Note that, by interpolation,
  \begin{align}
    \label{eq:L2H1L1}
    \intom (\mrho_t)^2\dd x
    \le \frac12\intom\big(\partial_x\mrho_t\big)^2\dd x + B\intom \mrho_t\dd x
    = \nrg(\mrho_t) + B,
  \end{align}
  for some universal constant $B$, and in particular $\mrho_t\in H^1(\crc)$.  
  Now, by means of the chain rule for weak derivatives, one obtains
  \begin{align*}
    \partial_x\mrho_t
    = \partial_x\big(\partial_xX_t^s\,\rho^s_t\circ X_t^s\big)
    = (\partial_xX_t^s)^2(\partial_x\rho_t^s)\circ X_t^s + \partial_{xx}X_t^s\,\rho_t^s\circ X_t^s,
  \end{align*}
  and therefore
  \begin{align*}
    (\partial_x\rho_t^s)\circ X_t^s
    = \frac1{(\partial_xX_t^s)^{2}}\,\partial_x\mrho_t - \frac{\partial_{xx}X_t^s}{(\partial_xX_t^s)^3}\mrho_t.
  \end{align*}
  With a change of variables $x=X_t^s(y)$, it follows that
  \begin{align*}
    \nrg(\rho_t^s)
    &=\frac12\intom \big(\partial_x\rho^s_t\big)^2\dd y\\
    &= \frac12\intom \big((\partial_x\rho^s_t)\circ X_t^s\big)^2\partial_xX_t^s\dd y \\
    &= \frac12\intom \left(\frac1{(\partial_xX_t^s)^{2}}\,\partial_x\mrho_t - \frac{\partial_{xx}X_t^s}{(\partial_xX_t^s)^3}\mrho_t\right)^2\partial_xX_t^s\dd y \\
    &= \frac12\intom \left\{\frac1{(\partial_xX_t^s)^{3}}\,(\partial_x\mrho_t)^2
      + \frac{(\partial_{xx}X_t^s)^2}{(\partial_xX_t^s)^{5}}(\mrho_t)^2
      - 2\frac{\partial_{xx}X_t^s}{(\partial_xX_t^s)^{4}}\mrho_t\,(\partial_x\mrho_t)\right\}\dd x.
  \end{align*}
  The expansion \eqref{eq:Xapprox} implies that
  \begin{align*}
    \frac1{(\partial_xX^s_t)^{3}} - 1 = -3s\,\partial_{xx}\varphi_t + O(s^2),
    \quad
    \frac{(\partial_{xx}X^s_t)^2}{(\partial_xX^s_t)^{5}} = O(s^2),
    \quad
    - 2\frac{\partial_{xx}X^s_t}{(\partial_xX^s_t)^{4}} = -2s\partial_{xxx}\varphi_t + O(s^2),
  \end{align*}
  again with implicit constants that are uniformly bounded with respect to $s\in(-1,1)$ and $t>0$.
  We thus obtain
  \begin{align*}
    & \left|\frac{\nrg(\rho^s_t)-\nrg(\mrho_t)}s
    + \intom \left(\frac32\partial_{xx}\varphi_t\,\big(\partial_x\mrho_t\big)^2+\partial_{xxx}\varphi_t\,\mrho_t\,\partial_x\mrho_t\right)\dd x \right| \\
    &\qquad = \left|\intom\left(O(s) \,(\partial_x\mrho_t)^2 + O(s)\, (\mrho_t)^2 + O(s)\, \mrho_t\,\partial_x\mrho_t\right)\dd x\right| 
    \le Cs\big[1+\nrg(\mrho_t)\big]
  \end{align*}
  with some constant $C$ that depends on $\varphi$ but is uniform in $s\in(0,1)$ and $t>0$.
  Above, we have used the interpolation \eqref{eq:L2H1L1}, which implies in particular that
  \begin{align*}
    \intom \mrho_t\,\partial_x\mrho_t\dd x
    \le\left(\intom\big(\partial_x\mrho_t\big)^2\dd x\right)^{1/2}\left(\intom\big(\mrho_t\big)^2\dd x\right)^{1/2}
    \le  \nrg(\mrho_t)+\intom\big(\mrho_t\big)^2\dd x
    \le 2\nrg(\mrho_t) + B.
  \end{align*}
  Since $\nrg(\mrho_t)<\infty$ for almost every $t>0$, the claim follows.
\end{proof}
\begin{lemma}
  We have that
  \begin{align}
    \label{eq:tfpreweak}
    0 = \int_0^\infty\frac{e^{-t/\eps}}\eps \intom\left[
    \eps\left(\partial_{xx}\varphi_t\,\frac{(\mwelo_t)^2}{\mrho_t}+\partial_t\partial_x\varphi_t\,\mwelo_t\right)
    - \left(\frac32\partial_{xx}\varphi_t\,\big(\partial_x\mrho_t\big)^2+\partial_{xxx}\varphi_t\,\mrho_t\,\partial_x\mrho_t\right)
    \right]\dd x \dd t.
  \end{align}
\end{lemma}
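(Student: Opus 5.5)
The identity \eqref{eq:tfpreweak} is the first variation of $s\mapsto\wed^\eps(\rho^s,\welo^s)$ at $s=0$. Since $(\rho^s,\welo^s)\in\curves(\bar\rho_0)$ for every $s$ (by the lemma above) and $(\rho^0,\welo^0)=(\mrho,\mwelo)$ is a minimizer, the difference quotient
\[
D(s):=\frac{\wed^\eps(\rho^s,\welo^s)-\wed^\eps(\mrho,\mwelo)}{s}
\]
is non-negative for $s\in(0,1)$ and non-positive for $s\in(-1,0)$. The plan is to show that $D(s)$ converges to the right-hand side of \eqref{eq:tfpreweak} (call it $L$) as $s\to0^\pm$. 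This forces $L\ge0$ and $L\le0$, hence $L=0$.

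First, write $D(s)$ as the $\mu^\eps$-integral over $t$ of
\[
\eps\,\frac{\kinet(\rho^s_t,\welo^s_t)-\kinet(\mrho_t,\mwelo_t)}{s}+\frac{\nrg(\rho^s_t)-\nrg(\mrho_t)}{s}.
\]
Insert the expansions \eqref{eq:tfvarykinet} and \eqref{eq:tfvarynrg}. The leading terms are exactly the integrand of \eqref{eq:tfpreweak}. The remainders contribute at most
\[
|s|\int_0^\infty\Big(\eps\big[1+\kinet(\mrho_t,\mwelo_t)\big]|\hat R^s_t|+\big[1+\nrg(\mrho_t)\big]|\check R^s_t|\Big)\dd\mu^\eps(t)\le C|s|\big(1+\wed^\eps(\mrho,\mwelo)\big)\le C|s|\big(1+\nrg(\bar\rho_0)\big),
\]
using the uniform bounds on $\hat R,\check R$, the fact that $\mu^\eps$ is a probability measure, and \eqref{eq:trivialbound}. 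This tends to zero as $s\to0$.

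The leading terms must also be $\mu^\eps$-integrable, so that the limit expression makes sense. Each is bounded by $C\big(1+\kinet(\mrho_t,\mwelo_t)+\nrg(\mrho_t)\big)$; this uses the estimates $\intom|\mwelo_t|\le\kinet+1$ and \eqref{eq:L2H1L1} from the preceding proofs. That bound is $\mu^\eps$-integrable, again by \eqref{eq:trivialbound}.

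The main obstacle is that the lemmas \eqref{eq:tfvarykinet} and \eqref{eq:tfvarynrg} are stated only for $s\in(0,1)$, while the sign argument also needs $s<0$. I would first check that their proofs go through verbatim for $s\in(-1,0)$. The expansions \eqref{eq:Xapprox} hold for $|s|<1$, and $\partial_xX^s_t$ stays bounded away from zero for small $|s|$, so the constants remain uniform. If that check failed, the fallback is to replace $\varphi$ by $-\varphi$: it generates the flow $X^{-s}$ and flips the sign of the linear term, which recovers the same conclusion using $s>0$ only. Finally, $\varphi$ should be taken with compact support in time as well as smooth. This is needed for the uniform $O(s^2)$ constants in \eqref{eq:Xapprox}, and it is exactly what the earlier lemmas assume.
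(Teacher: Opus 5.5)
Your argument is correct and is essentially the paper's proof. Minimality of $(\mrho,\mwelo)$ combined with the expansions \eqref{eq:tfvarykinet} and \eqref{eq:tfvarynrg} gives one inequality as $s\searrow0$, because the remainders integrate to $O(s)$ against $\mu^\eps$ thanks to $\wed^\eps(\mrho,\mwelo)<\infty$. The reverse inequality follows from linearity in $\varphi$ by replacing $\varphi$ with $-\varphi$, which is exactly your fallback. Your primary route via $s<0$ is equivalent, since the flow of $-\partial_x\varphi_t$ at time $s$ is $X_t^{-s}$, and the paper's computations behind the expansions are in fact carried out for $s\in(-1,1)$. Your explicit check that the leading-order integrand is $\mu^\eps$-integrable is a detail the paper leaves implicit.
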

\begin{proof}
  We combine the equations \eqref{eq:tfvarykinet} and \eqref{eq:tfvarynrg} as follows:
  by minimality of $(\mrho,\mwelo)$ for $\wed^\eps$, we have for every $s\in(0,1)$ that
  \begin{align*}
    0& \le \frac{\wed^\eps(\rho^s,\welo^s)-\wed(\mrho,\mwelo)}s \\
     &\le \int_0^\infty \left(\eps\frac{\kinet(\rho^s_t,\welo^s_t)-\kinet(\mrho_t,\mwelo_t)}s+\frac{\nrg(\rho^s_t)-\nrg(\mrho_t)}s\right)\dd\mu^\eps(t) \\
     &= \int_0^\infty\frac{e^{-t/\eps}}\eps \intom\left[
    \eps\left(\partial_{xx}\varphi_t\,\frac{(\mwelo_t)^2}{\mrho_t}+\partial_t\partial_x\varphi_t\,\mwelo_t\right)
    - \left(\frac32\partial_{xx}\varphi_t\,\big(\partial_x\mrho_t\big)^2+\partial_{xxx}\varphi_t\,\mrho_t\,\partial_x\mrho_t\right)
       \right]\dd x \dd t \\
    &\qquad + s\,\int_0^\infty\Big(\eps\hat R\big[1+\kinet(\mrho_t,\mwelo_t)\big]+\check R\big[1+\nrg(\mrho_t)\big]\Big)\dd\mu^\eps(t).
  \end{align*}
  The integral expression following $s$ in the last line is finite and independent of $s$,
  since $\hat R$, $\check R$ are uniformly bounded for $s\in(0,1)$ and $t>0$,
  since $\wed^\eps(\mrho,\mwelo)<\infty$, and since $\mu^\eps$ is a finite measure.
  Thus, we can simply pass to the limit $s\searrow0$,
  which yields \eqref{eq:tfpreweak} with ``$\le$'' in place of ``$=$'';
  equality is obtained by observing that the expression on the right-hand side is linear in $\varphi$,
  and that the inequality holds with $-\varphi$ in place of $\varphi$ as well.
\end{proof}
To finish the proof of Proposition \ref{prp:tfepsweak},
choose the test function $\varphi$ in the form
\begin{align}
  \label{eq:phiPhi}
  \varphi(t,x):=\eps e^{t/\eps}\Phi(t,x),
\end{align}
with $\Phi\in C^\infty_c(\Rp\times\crc)$ being the desired test function for \eqref{eq:tfepsweak}.
\begin{proof}[Proof of Proposition \ref{prp:tfepsweak}]
  Simply subsitute $\varphi=\eps e^{t/\eps}\Phi$ in \eqref{eq:tfpreweak}.
  The only term that needs further work is the one with $\partial_t\partial_x\varphi$.
  Using that
  \begin{align*}
    \partial_t\varphi = e^{t/\eps}(\Phi+\eps\,\partial_t\Phi),
  \end{align*}
  and the weak form \eqref{eq:weakcont} of the continuity equation $\partial_t\mrho+\partial_x\mwelo=0$,
  it follows that
  \begin{align*}
    \int_0^\infty e^{-t/\eps}\intom \partial_x\partial_t\varphi_t\,\mwelo_t \dd x\dd t
    &= \int_0^\infty \intom \partial_x\big(\Phi_t+\eps\,\partial_t\Phi_t\big)\,\mwelo_t\dd x\dd t\\&
    = -\int_0^\infty \intom\big(\partial_t\Phi_t+\eps\,\partial_{tt}\Phi_t\big)\,\mrho_t\dd x\dd t.
  \end{align*}
  This yields the desired weak form \eqref{eq:tfepsweak}.
\end{proof}

%%%%%%%%%%%%%%%%%%%%%%%%%%%%%%%%%%%%%%%%%%%%%%%%%%%%%%%
\subsection{Compactness estimate}
%%%%%%%%%%%%%%%%%%%%%%%%%%%%%%%%%%%%%%%%%%%%%%%%%%%%%%%
%
\begin{proposition}
  \label{prp:tfH2}
  Any minimizer $(\mrho,\mwelo)$ of $\wed^\eps$ satisfies $\mrho\in L^2_\loc((0,\infty);H^2(\crc))$,
  and there is an $\eps$-uniform constant $K$ such that
  for each $T>4$ and each $\tau\in(0,1)$,
  \begin{align}
    \label{eq:tfH2}
    \int_{2\tau}^{T-2\tau} \intom \big(\partial_{xx}\mrho_t\big)^2\dd x \dd t \le K\big(1+(\eps/\tau)^2\big)T.
  \end{align}
\end{proposition}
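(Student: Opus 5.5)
The plan is to vary the minimizer $(\mrho,\mwelo)$ \emph{along the heat flow in space}, localized in time, and to read off $\int\!\!\int(\partial_{xx}\mrho)^2$ from the first variation of $\nrg$. Let $H_r$ denote the heat semigroup on $\crc$. For a smooth $\eta\ge0$ with compact support in $(0,\infty)$ and $s>0$, I set
\begin{align*}
  \rho^s_t := H_{s\eta(t)}\mrho_t,\qquad
  \welo^s_t := H_{s\eta(t)}\mwelo_t - s\dot\eta(t)\,\partial_x H_{s\eta(t)}\mrho_t .
\end{align*}
Formally $\partial_t\rho^s = H(\partial_t\mrho) + s\dot\eta\,\partial_{xx}H\mrho = -\partial_x\welo^s$, so (C4) holds; this can be checked against test functions exactly as in the lemma on $(\rho^s,\welo^s)\in\curves(\bar\rho_0)$. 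Properties (C1)--(C3) are clear: the heat flow preserves mass and the constraint $\rho\ge0$, and the perturbation equals the minimizer near $t=0$. Minimality then gives $0\le\big(\wed^\eps(\rho^s,\welo^s)-\wed^\eps(\mrho,\mwelo)\big)/s$, and I expand each piece as $s\searrow0$.

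\emph{Energy part.} Along the heat flow, $\frac{\dn}{\dn r}\nrg(H_r\rho)=-\intom(\partial_{xx}H_r\rho)^2\dd x$. Hence $(\nrg(\rho^s_t)-\nrg(\mrho_t))/s\le -\eta(t)\intom(\partial_{xx}H_{s\eta}\mrho_t)^2\dd x$ after averaging over $r\in[0,s\eta]$, using monotonicity of $r\mapsto\|\partial_{xx}H_r\rho\|_{L^2}$. By Fatou and lower semicontinuity, in the limit this produces $-\int\eta\intom(\partial_{xx}\mrho_t)^2$, which in particular shows $\mrho\in L^2_\loc H^2$.

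\emph{Kinetic part.} Expanding the square,
\[
  \frac{(\welo^s)^2}{\rho^s}=\frac{(H\mwelo)^2}{H\mrho}-2s\dot\eta\,H\mwelo\,\partial_x\log H\mrho+s^2\dot\eta^2\frac{(\partial_xH\mrho)^2}{H\mrho}.
\]
\begin{itemize}
\item First term: since $H_r$ averages translations and $(r,w)\mapsto w^2/r$ is jointly convex (Remark \ref{rmk:0by0}), Jensen gives $\kinet(H\mrho,H\mwelo)\le\kinet(\mrho,\mwelo)$, so it contributes $\le0$.
\item Last term: the Fisher information of a heat-regularized density satisfies $\intom(\partial_xH_r\rho)^2/H_r\rho\le C/r$ (mass one). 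Hence this term is $O(s\,\dot\eta^2/\eta)$ after dividing by $s$. I will choose $\eta$ so that $\dot\eta^2/\eta$ is locally bounded (e.g.\ a squared cutoff times an exponential), so it vanishes as $s\to0$.
\item Cross term: in the limit it becomes $-2\dot\eta\intom\mwelo\,\partial_x\log\mrho$. Using the continuity equation, $\intom\mwelo\,\partial_x\log\mrho=\frac{\dn}{\dn t}\intom\mrho\log\mrho$. Integrating by parts in time therefore moves everything onto the entropy $S(\mrho_t)$ with weight $\partial_t(e^{-t/\eps}\dot\eta)$.
\end{itemize}
Justifying this step rigorously (performing it at level $s>0$, where $H\mrho>0$ is smooth, and then letting $s\to0$) is the main obstacle I expect.

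\emph{Choice of weight and bookkeeping.} I take $\eta=\eps e^{t/\eps}\zeta(t)$, where $\zeta$ is a cutoff with $\zeta=1$ on $[2\tau,T-2\tau]$, support in $[\tau,T-\tau]$, $|\zeta'|\le C/\tau$ and $|\zeta''|\le C/\tau^2$. Then $e^{-t/\eps}\dot\eta=\zeta+\eps\zeta'$, and the resulting inequality reads
\[
  \int\zeta\intom(\partial_{xx}\mrho)^2\le C\eps\int(|\zeta'|+\eps|\zeta''|)\,|S(\mrho_t)|\dd t.
\]
I bound the entropy by $-1/e\le\rho\log\rho\le\rho^2$ together with \eqref{eq:L2H1L1}, which gives $|S(\mrho_t)|\le C(1+\nrg(\mrho_t))$. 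Integrating over the transition layers of length $\tau$, I use \eqref{eq:potenzbound} and \eqref{eq:kinetbound} (more precisely the monotonicity $\nrg(\mrho_t)\le\nrg(\bar\rho_0)+\eps\kinet$ from the proof of Proposition \ref{prp:generalapriori}). This yields $\int_{\text{layer}}(1+\nrg)\le C(\tau+\eps)$, so the right-hand side is $\le C\eps(1+\eps/\tau)(1+\eps/\tau)$.

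If instead the direct Jensen term turns out to be too crude, I would keep its exact $s$-derivative, which should be $\le0$ as the dissipation of the kinetic action along the heat flow. Either way, since $T>4$ and $\tau<1$, this gives \eqref{eq:tfH2} with a $K$ depending only on $\nrg(\bar\rho_0)$.
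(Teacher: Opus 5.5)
Your strategy (flow interchange with the entropy $S(\rho)=\intom\rho\log\rho\dd x$) is genuinely different from the paper's and can be made to work. As written, however, the two non-Jensen pieces of the kinetic term are mishandled, and this is exactly where $\eps$-uniformity is decided.

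Write $\mathcal I(\rho)=\intom(\partial_x\rho)^2/\rho\dd x$. Your last term contributes $\frac{s^2}{2}\dot\eta^2\,\mathcal I(H_{s\eta}\mrho_t)\le Cs\,\dot\eta^2/\eta$ to $\kinet(\rho^s_t,\welo^s_t)$. So \emph{after} division by $s$ it is $O(\dot\eta^2/\eta)$, not $O(s\,\dot\eta^2/\eta)$, and your argument does not make it vanish. It enters with the weight $\eps\cdot e^{-t/\eps}/\eps=e^{-t/\eps}$, and for $\eta=\eps e^{t/\eps}\zeta$ the resulting bound is
\[
  C\int\frac{(\zeta+\eps\zeta')^2}{\eps\,\zeta}\dd t\;\ge\;C\,\frac{T-4\tau}{\eps},
\]
which destroys $\eps$-uniformity. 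Rescaling $\eta$ does not help, since $\eta\sim\eps e^{t/\eps}$ is dictated by the normalization of the energy term. (The term does vanish, but only if you use more, e.g.\ $\mrho_t\in H^1(\crc)$ together with Lemma~\ref{lem:villani}.) Second, the cross term cannot be sent to its limit at fixed $t$: the expression $\intom\mwelo_t\,\partial_x\log\mrho_t\dd x$ requires $\mathcal I(\mrho_t)<\infty$, which is not available a priori.

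Both problems disappear if you keep $s>0$ and work with the flux $\welo^s$ of the perturbed curve instead of $H\mwelo$. We have $H\mwelo_t=\welo^s_t+s\dot\eta\,\partial_x\rho^s_t$, and $\intom\welo^s_t\,\partial_x\log\rho^s_t\dd x=\frac{\dn}{\dn t}S(\rho^s_t)$ by the continuity equation for $(\rho^s,\welo^s)$. Expanding $(H\mwelo_t)^2/\rho^s_t$ then gives the exact identity
\[
  \kinet(\rho^s_t,\welo^s_t)=\kinet(H\mrho_t,H\mwelo_t)-s\dot\eta\,\frac{\dn}{\dn t}S(\rho^s_t)-\frac{s^2}{2}\dot\eta^2\,\mathcal I(\rho^s_t).
\]
That is, rewriting your cross term through $\partial_t\rho^s$ produces $-s^2\dot\eta^2\mathcal I(\rho^s_t)$, which overcompensates the quadratic term. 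With Jensen, the kinetic difference quotient is bounded by $-\dot\eta\,\frac{\dn}{\dn t}S(\rho^s_t)$, and you integrate by parts in time at level $s$. Here your bound $\mathcal I(\rho^s_t)\le C/(s\eta)$ is exactly what makes $\dot\eta\,\frac{\dn}{\dn t}S(\rho^s_t)$ locally integrable when $\dot\eta^2/\eta$ is bounded. The boundary terms at the edges of $\mathrm{supp}\,\zeta$ need a short extra argument, using that $\zeta+\eps\zeta'$ vanishes there and that $\kinet(\mrho_t,\mwelo_t)$ is integrable. Since $e^{-t/\eps}\dot\eta=\zeta+\eps\zeta'$, this yields
\[
  \int\zeta\intom\big(\partial_{xx}\rho^s_t\big)^2\dd x\dd t\le\int(\zeta'+\eps\zeta'')\,S(\rho^s_t)\dd t,
\]
\emph{without} your prefactor $\eps$; with that prefactor, the estimate would force $\partial_{xx}\mrho\to0$ as $\eps\searrow0$. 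Now use $-1/e\le S(\rho^s_t)\le S(\mrho_t)\le\nrg(\mrho_t)+B$, with $B$ from \eqref{eq:L2H1L1}, together with your layer estimate. The right-hand side is then at most $C(1+\eps/\tau)^2$ uniformly in $s$, which implies \eqref{eq:tfH2}.

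Compare this with the paper. It bounds the cross term by $2rab\le|r|(a^2+b^2)$, which costs $|\psi'_t|\,\kinet(\mrho_t,\mwelo_t)$ plus an order-one Fisher-information term. It controls the latter by Lemma~\ref{lem:villani} and absorbs it via Young's inequality, which is where the factor $T$ comes from. Your corrected route replaces that pointwise-in-$t$ estimate by a time chain rule for $S$ along $\rho^s$. In exchange it avoids Lemma~\ref{lem:villani} and gives a $T$-independent bound, reflecting that $S$ is a Lyapunov functional of the thin-film flow with dissipation $\intom(\partial_{xx}\rho)^2\dd x$.
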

The prove uses a family of perturbations $(\rho^s,\welo^s)$ of the minimizer $(\mrho,\mwelo)$ by means of the heat flow.
Recall the definition of the periodic heat kernel $\htk^{(\cdot)}:\Rp\times\crc\to\crc$:
\begin{align*}
  \htk^\tau(z) = \tau^{-1/2}\htk\big(\tau^{-1/2}z\big),
  \quad\text{with}\quad
  \htk(\zeta) =(4\pi)^{-1/2}\sum_{m\in\Z} \exp\left(-\frac14(\zeta-m)^2\right).
\end{align*}
Let $\psi\in C^\infty_c(\Rp)$ be a test function and introduce for every $t>0$ the modified kernels $\mk_t^{(\cdot)}$ by
\begin{align}
  \label{eq:Gtilde}
  \mk_t^s = \htk^{s\psi(t)}.
\end{align}  
By the properties of the heat semi-group,
\begin{align}
  \label{eq:convol009}
  \partial_s\mk_t^s = \psi_t\,\partial_{zz}\mk_t^s, \quad
  \partial_t\mk_t^s = s\psi_t'\,\partial_{zz}\mk_t^s.
\end{align}
Now define the following perturbation $(\rho^s_t,\welo^s_t)$ of the minimizer $(\mrho,\mwelo)$
at any given $t\ge0$:
\begin{align*}
  \rho_t^s = \mk_t^s\ast\mrho_t, \quad
  \welo_t^s = \mk_t^s\ast\mwelo_t - s\psi'_t\,\partial_x\mk_t^s\ast\mrho_t.
\end{align*}
\begin{lemma}
  \label{lem:heatperturb1}
  $(\rho^s,\welo^s)\in\curves$ for each $s>0$.
\end{lemma}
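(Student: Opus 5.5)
We verify (C1)--(C4) for the pair $(\rho^s,\welo^s)$ in turn; the only step with real content is (C4), the continuity equation.

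\emph{(C1)--(C3).} The kernel $\mk_t^s=\htk^{s\psi(t)}$ is a non-negative probability density on $\crc$ whenever $s\psi(t)>0$. It equals the Dirac mass where $\psi(t)=0$, and in particular near $t=0$, since $\psi$ has compact support in $\Rp$.

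For (C1), convolution preserves non-negativity and unit mass. In the case $I=[0,1]$ it also preserves the upper bound $\rho\le1$, because it averages against a probability kernel. Hence $\rho^s_t\in\prbr{I}$.

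For (C2), note first that $\rho^s_t=\mrho_t$ for $t$ near $0$, so $\rho^s_0=\bar\rho_0$. Continuity in $\BL$ then follows from two facts. The map $t\mapsto\mk^s_t$ is smooth in $t$ where $\psi>0$, and equals the identity otherwise. Convolution with a probability kernel is $1$-Lipschitz in $\BL$: if $f$ is $1$-Lipschitz, then so is $\mk\ast f$ (using the reflected kernel). Therefore
\[
\BL(\rho^s_{t_1},\rho^s_{t_0})\le \BL(\mrho_{t_1},\mrho_{t_0})+\BL\big(\mk^s_{t_1}\ast\mrho_{t_0},\mk^s_{t_0}\ast\mrho_{t_0}\big),
\]
and the last term tends to zero as $t_1\to t_0$.

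For (C3), the first part of $\welo^s_t$ satisfies $\|\mk^s_t\ast\mwelo_t\|_{L^1}\le\|\mwelo_t\|_{L^1}$. For the second part, where $\psi_t>0$,
\[
\|s\psi'_t\,\partial_x\mk^s_t\ast\mrho_t\|_{L^1}\le s|\psi'_t|\,\|\partial_x\mk^s_t\|_{L^1}\le C\,s|\psi'_t|\,(s\psi_t)^{-1/2}.
\]
This needs care: $(s\psi_t)^{-1/2}$ blows up where $\psi_t\to0$ at the edges of $\operatorname{supp}\psi$. I would avoid this by noting that $\psi'_t\,\partial_x\mk^s_t\ast\mrho_t=\psi'_t\,\mk^s_t\ast\partial_x\mrho_t$, which is bounded in $L^1$ by $|\psi'_t|\,\|\partial_x\mrho_t\|_{L^2}$. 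The right-hand side is locally integrable in time because $\int\nrg(\mrho_t)\dd\mu^\eps<\infty$. Alternatively, the bound $|\psi'|^2\le C\psi$, valid for a suitable choice of $\psi$ (e.g. $\psi$ a square of a test function), gives integrability directly.

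\emph{(C4).} Take $\theta\in C^1_c(\Rp\times\crc)$ and compute
\[
\int_0^\infty\intom\big(\partial_t\theta_t\,\rho^s_t+\partial_x\theta_t\,\welo^s_t\big)\dd x\dd t .
\]
Move the convolutions onto the test function. The kernel is symmetric, so with $\tilde\theta_t:=\mk^s_t\ast\theta_t$ we get
\[
\mk^s_t\ast\partial_t\theta_t=\partial_t\tilde\theta_t-(\partial_t\mk^s_t)\ast\theta_t=\partial_t\tilde\theta_t-s\psi'_t\,\partial_{zz}\mk^s_t\ast\theta_t ,
\]
using \eqref{eq:convol009}. In the flux term, the piece $-s\psi'_t\,\partial_x\mk^s_t\ast\mrho_t$ paired with $\partial_x\theta_t$ gives, after one integration by parts, $+s\psi'_t\intom(\partial_{xx}\mk^s_t\ast\theta_t)\,\mrho_t\dd x$. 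This exactly cancels the extra time-derivative term above. What remains is
\[
\int_0^\infty\intom\big(\partial_t\tilde\theta_t\,\mrho_t+\partial_x\tilde\theta_t\,\mwelo_t\big)\dd x\dd t ,
\]
which vanishes by (C4) for $(\mrho,\mwelo)$.

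For this last step $\tilde\theta$ must be an admissible test function. It is $C^1$ with compact support in time, since $\mk^s_t$ depends smoothly on $t$ where $\psi>0$. Near the boundary of $\{\psi>0\}$ the regularity of $t\mapsto\htk^{s\psi(t)}\ast\theta_t$ must be checked. Heat-semigroup smoothing gives $\partial_t\tilde\theta_t=\mk^s_t\ast\partial_t\theta_t+s\psi'_t\,\mk^s_t\ast\partial_{zz}\theta_t$, which is continuous because $\theta$ is smooth; the approximation argument behind \eqref{eq:weakcont} can absorb any remaining lack of regularity.

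The main obstacle is not the algebra, which is exact thanks to \eqref{eq:convol009}, but this regularity and integrability bookkeeping at times where $\psi_t$ degenerates to zero. The two devices above handle it: moving derivatives onto $\theta$ or $\mrho$ rather than the kernel, and using the finiteness of $\int\nrg(\mrho_t)\dd\mu^\eps$.
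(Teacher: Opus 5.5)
Your proof is correct and follows the paper's route. (C1)--(C3) are inherited from $(\mrho,\mwelo)$, and (C4) follows by moving the even kernel $\mk^s_t$ and the odd kernel $\partial_x\mk^s_t$ onto $\theta$ and using \eqref{eq:convol009}, so that the extra terms cancel and one is left with the continuity equation for $(\mrho,\mwelo)$ tested against $\mk^s_t\ast\theta_t$. Your extra bookkeeping near the edge of $\mathrm{supp}\,\psi$ goes beyond what the paper gives, and the blow-up you feared does not in fact occur, since any non-negative smooth $\psi$ satisfies $|\psi'|^2\le 2\|\psi''\|_\infty\psi$.
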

\begin{proof}[Proof of Lemma \ref{lem:heatperturb1}]
  By the smoothing properties of the heat semi-group,
  we have that $\rho_t^s,\,\welo_t^s\in C^\infty(\crc)$ and moreover also $\rho_t^s>0$,
  for arbitrary $s>0$, at every $t>0$ where $\psi(t)>0$,
  It is readily seen that $(\rho^s,\welo^s)$ satisfies properties (C1)--(C3) because $(\mrho,\mwelo)$ does.
  To prove (C4), let $\theta\in C^\infty_c(\Rp\times\crc)$,
  and observe that by the elementary properties of convolution,
  and in particular since $z\mapsto\mk_t^s(z)$ is even while $z\mapsto\partial_x\mk_t^s(z)$ is odd,
  \begin{align*}
    & \int_0^\infty\intom \big(\partial_t\theta_t\,\rho^s_t+\partial_x\theta_t\,\welo^s_t\big)\dd x\dd t \\
    &\qquad = \int_0^\infty\intom \big((\mk^s_t\ast\partial_t\theta_t)\,\mrho_t + (\mk^s_t\ast\partial_x\theta_t)\,\mwelo_t
      - (s\psi'_t\,\partial_x\mk^s_t\ast\partial_x\theta_t)\,\mrho_t\big)\dd x\dd t \\
    &\qquad = \int_0^\infty\intom \big(\partial_t(\mk^s_t\ast\theta_t)\mrho_t + \partial_x(\mk^s_t\ast\theta_t)\,\mwelo_t\big)\dd x\dd t
      = 0,
  \end{align*}
  where we we used the relation \eqref{eq:convol009} and the fact that $(\mrho,\mwelo)$ satisfies the continuity equation.
\end{proof}
We are now going to estimate the difference quotient
\begin{align*}
  \frac1s\big(\wed^\eps(\rho^s,\welo^s)-\wed^\eps(\mrho,\mwelo)\big)
\end{align*}
in several steps.
\begin{lemma}
  \label{lem:tfperturb2}
  Let $t>0$ so that $\kinet(\mrho_t,\mwelo_t)<\infty$.
  Then, for every $s>0$,
  %\begin{align}
  %  \label{eq:tfperturb2}
  %  \frac{\kinet(\rho^s_t,\welo^s_t)-\kinet(\mrho_t,\mwelo_t)}s
  %  \le |\psi_t'|\kinet(\mrho_t,\mwelo_t)
  %  + 2|\psi_t'|\big(1+s|\psi_t'|\big)\intom \big(\partial_x\sqrt{\rho_t^s}\big)^2\dd x.
  %\end{align}
  \begin{align}
    \label{eq:tfperturb2}
    \frac{\kinet(\rho^s_t,\welo^s_t)-\kinet(\mrho_t,\mwelo_t)}s
    \le |\psi_t'|\kinet(\mrho_t,\mwelo_t)
    + \frac32|\psi_t'|\big(1+s|\psi_t'|\big)\left(\intom \big(\partial_{xx}\rho_t^s\big)^2\dd x\right)^{1/2}.
  \end{align}
\end{lemma}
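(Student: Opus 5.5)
The plan is to rewrite the perturbed flux in terms of $\rho^s_t$ alone, expand the kinetic energy, and control three resulting terms separately. Since convolution commutes with differentiation, $\partial_x\mk^s_t\ast\mrho_t=\partial_x\rho^s_t$, and so
\begin{align*}
  \welo^s_t = \mk^s_t\ast\mwelo_t - s\psi'_t\,\partial_x\rho^s_t .
\end{align*}
Recall that $\rho^s_t>0$ is smooth wherever $s\psi(t)>0$; if $\psi(t)=0$, the perturbation is trivial and there is nothing to prove. Expanding the square gives
\begin{align*}
  \kinet(\rho^s_t,\welo^s_t)
  = \frac12\intom \frac{(\mk^s_t\ast\mwelo_t)^2}{\mk^s_t\ast\mrho_t}\dd x
  - s\psi'_t\intom \frac{(\mk^s_t\ast\mwelo_t)\,\partial_x\rho^s_t}{\rho^s_t}\dd x
  + \frac{s^2(\psi'_t)^2}2\intom\frac{(\partial_x\rho^s_t)^2}{\rho^s_t}\dd x .
\end{align*}

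\emph{First term.} By Remark \ref{rmk:0by0}, the map $(r,w)\mapsto\kfrac{w^2}{r}$ is jointly convex, lower semi-continuous and $1$-homogeneous. Since $\mk^s_t$ is a probability kernel, Jensen's inequality applied pointwise in $x$ to the averages $\big(\mk^s_t\ast\mrho_t,\mk^s_t\ast\mwelo_t\big)(x)$, followed by integration over $\crc$, gives that the first term is at most $\kinet(\mrho_t,\mwelo_t)$. In other words, the heat flow does not increase kinetic energy.

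\emph{Cross term.} I will apply Young's inequality in the form $2|ab|\le a^2+b^2$, with
\[ a=\frac{\mk^s_t\ast\mwelo_t}{\sqrt{\rho^s_t}}, \qquad b=\frac{\partial_x\rho^s_t}{\sqrt{\rho^s_t}} . \]
This bounds the cross term by
\[ \frac{s|\psi'_t|}2\intom a^2\dd x+\frac{s|\psi'_t|}2\intom b^2\dd x . \]
The $a^2$-part is at most $s|\psi'_t|\,\kinet(\mrho_t,\mwelo_t)$, by the Jensen bound just obtained. Dividing by $s$ and collecting terms then leaves
\begin{align*}
  \frac{\kinet(\rho^s_t,\welo^s_t)-\kinet(\mrho_t,\mwelo_t)}s
  \le |\psi'_t|\,\kinet(\mrho_t,\mwelo_t)
  + \frac{|\psi'_t|}2\big(1+s|\psi'_t|\big)\intom\frac{(\partial_x\rho^s_t)^2}{\rho^s_t}\dd x .
\end{align*}

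It therefore remains to show the Fisher-information bound
\[ \intom\frac{(\partial_x\rho)^2}{\rho}\dd x\le 3\,\|\partial_{xx}\rho\|_{L^2} \]
for smooth positive $\rho$ of unit mass on $\crc$. I expect this to be the main obstacle, in particular getting the constant $3$. The first thing I will try is Cauchy--Schwarz against the unit mass,
\[ \intom\frac{(\partial_x\rho)^2}{\rho}\dd x\le\Big(\intom\frac{(\partial_x\rho)^4}{\rho^3}\dd x\Big)^{1/2}, \]
followed by an integration by parts of $\intom(\partial_x\rho)^4\rho^{-3}\dd x$ using $\partial_x(\rho^{-2})=-2\rho^{-3}\partial_x\rho$. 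This yields
\[ \intom\frac{(\partial_x\rho)^4}{\rho^3}\dd x=\frac32\intom\frac{(\partial_x\rho)^2\,\partial_{xx}\rho}{\rho^2}\dd x , \]
which is the expected source of the factor $\frac32$. The difficulty is closing the subsequent Cauchy--Schwarz step so that only $\|\partial_{xx}\rho\|_{L^2}$, and no negative power of $\rho$, remains. If this route does not close directly, I will fall back on a one-dimensional Glaeser-type pointwise bound $(\partial_x\rho)^2\le C\rho\,|\partial_{xx}\rho|$ in an averaged ($L^2$) form, combined with the unit mass of $\rho^s_t$, to reach the same estimate.
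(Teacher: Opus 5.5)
The gap is the Fisher-information bound $\intom(\partial_x\rho)^2\rho^{-1}\dd x\le3\|\partial_{xx}\rho\|_{L^2}$, which neither of your routes proves. Everything before it is correct and is the paper's argument. Expanding the square and applying Young to the cross term is exactly the inequality $(a+rb)^2\le(1+|r|)a^2+|r|(1+|r|)b^2$ used there. Jensen for the jointly convex, $1$-homogeneous quotient handles the $\mk^s_t\ast\mwelo_t$ contributions.

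\textbf{Why your first route cannot close.} Consider a general weighted split,
\[
\intom(\partial_x\rho)^2\rho^{-1}\dd x\le\Big(\intom(\partial_x\rho)^4\rho^{-2a}\dd x\Big)^{1/2}\Big(\intom\rho^{2a-2}\dd x\Big)^{1/2}.
\]
Integrating by parts gives $\intom(\partial_x\rho)^4\rho^{-2a}\dd x=\frac{3}{2a-1}\intom(\partial_x\rho)^2\rho^{1-2a}\,\partial_{xx}\rho\dd x$. Cauchy--Schwarz against $\|\partial_{xx}\rho\|_{L^2}$ reproduces the left-hand side only if $4a-2=2a$, i.e.\ $a=1$. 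Your choice $a=\frac32$, forced by pairing with the unit mass, leaves either $\intom(\partial_x\rho)^4\rho^{-4}\dd x$ or $\intom(\partial_{xx}\rho)^2\rho^{-1}\dd x$. Neither is bounded in terms of $\|\partial_{xx}\rho\|_{L^2}$: both blow up when $\rho$ has a quadratic minimum approaching zero.

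\textbf{Why the fallback does not help.} A pointwise bound $(\partial_x\rho)^2\le C\rho\,|\partial_{xx}\rho|$ is false: at $x=0$ for $\rho=1+\frac12\sin(2\pi x)$ one has $\partial_{xx}\rho=0\neq\partial_x\rho$. The genuine Glaeser inequality involves $\|\partial_{xx}\rho\|_{L^\infty}$, not the $L^2$ norm appearing in \eqref{eq:tfperturb2}. An unspecified ``averaged $L^2$ form'' of it is just the inequality you still need to prove.

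\textbf{The fix.} Take $a=1$. The mass plays no role, since the inequality is $1$-homogeneous in $\rho$; what matters is that $\crc$ has length one. Cauchy--Schwarz against the constant $1$ gives $\intom(\partial_x\rho)^2\rho^{-1}\dd x\le\big(\intom(\partial_x\rho)^4\rho^{-2}\dd x\big)^{1/2}$. Now integrate the identity $\partial_x\big((\partial_x\rho)^3/\rho\big)=-(\partial_x\rho)^4/\rho^2+3(\partial_x\rho)^2\partial_{xx}\rho/\rho$ over $\crc$:
\[
\intom\frac{(\partial_x\rho)^4}{\rho^2}\dd x=3\intom\frac{(\partial_x\rho)^2}{\rho}\,\partial_{xx}\rho\dd x\le3\left(\intom\frac{(\partial_x\rho)^4}{\rho^2}\dd x\right)^{1/2}\|\partial_{xx}\rho\|_{L^2}.
\]
This closes with exactly the constant $3$. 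All integrals are finite because $\rho^s_t$ is smooth and positive when $\psi(t)>0$. This is Lemma \ref{lem:villani} in the appendix, which the paper simply invokes at this step.
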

\begin{proof}[Proof of Lemma \ref{lem:tfperturb2}]
  By means of the elementary inequality 
  \begin{align*}
    (a+rb)^2 &=a^2+r^2b^2+2rab\le a^2+r^2b^2+|r|(a^2+b^2)\\
    &= (1+|r|)a^2 + |r|(1+|r|)b^2 
  \end{align*}
  for arbitrary $a,b,r\in\R$,
  we conclude that
  \begin{align*}
    \big(\welo^s)^2
    = \big(\mk^s\ast\mwelo-s\psi'\,\partial_z\mk^s\ast\mrho\big)^2
    \le \big(1+s|\psi'|\big)\big|\mk^s\ast\mwelo\big|^2 + s|\psi'|\big(1+s|\psi'|\big)\big(\partial_z\mk^s\ast\mrho\big)^2.
  \end{align*} 
  Combining this with Jensen's inequality, using the convexity of the quotient,
  the pertubation of $\wed$'s kinetic part can be estimated as follows:
  \begin{align*}
    \intom \frac{(\welo^s_t)^2}{\rho^s_t}\dd x
    &\le \big(1+s|\psi_t'|\big)\intom\frac{\big(\mk^s_t\ast\mwelo_t\big)^2}{\mk^s_t\ast\mrho_t}\dd x
      + s|\psi_t'|\big(1+s|\psi_t'|\big)\intom\frac{\big(\partial_z\mk^s_t\ast\mrho_t\big)^2}{\mk^s_t\ast\mrho_t} \dd x \\
    &\le \big(1+s|\psi_t'|\big)\intom \mk^s_t\ast\left(\frac{(\mwelo_t)^2}{\mrho_t}\right)\dd x
      + s|\psi_t'|\big(1+s|\psi_t'|\big)\intom\frac{\big(\partial_x(\mk^s_t\ast\mrho_t)\big)^2}{\mk^s_t\ast\mrho_t} \dd x \\
     &  \le (1+s|\psi_t'|)\intom \frac{(\mwelo_t)^2}{\mrho_t}\dd x
      + 3s|\psi_t'|\big(1+s|\psi_t'|\big)\left(\intom \big(\partial_{xx}\rho_t^s\big)^2\dd x\right)^{1/2},
  \end{align*}
  where we have used \eqref{eq:villani} in the last step.
  Subtraction of $\kinet(\mrho_t,\mwelo_t)$ and division by $2s>0$ yield \eqref{eq:tfperturb2}.
\end{proof}
Concerning the potential term, we have:
\begin{lemma}
  \label{lem:et}
  Fix some $t>0$ and consider the function $e_t:\Rp\to\Rnn$ given by $e_t(s):=\nrg(\rho_t^s)$.
  Then:
  \begin{enumerate}
  \item $e_t$ is non-increasing,
  \item $e_t$ is convex,
  \item If $\mrho_t\in H^1(\crc)$, then $\lim_{s\searrow0}e_t(s)=\nrg(\mrho_t)$,
    and consequently
    \begin{align}
      \label{eq:nrgquot}
      \frac{\nrg(\mrho_t)-\nrg(\rho_t^s)}s \ge \psi_t(t)\intom \big(\partial_{xx}\rho_t^s\big)^2\dd x.
    \end{align}
  \end{enumerate}
\end{lemma}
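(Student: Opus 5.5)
We argue via the heat semigroup. Write $r:=s\psi_t\ge0$, so that $\rho^s_t=\htk^{r}\ast\mrho_t$ solves $\partial_r u=\partial_{xx}u$ with $u(0)=\mrho_t$, and $e_t(s)=\nrg(u(s\psi_t))$. If $\psi_t=0$, $e_t$ is constant and all three claims are trivial, so assume $\psi_t>0$. Then $e_t$ is a time-reparametrization (linear, with positive speed $\psi_t$) of $r\mapsto\nrg(u(r))$, and monotonicity and convexity transfer directly. Everything therefore reduces to the Dirichlet energy along the heat flow on $\crc$.

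\emph{Monotonicity and convexity.} We work in Fourier series: with $\hat\rho_k$ the Fourier coefficients of $\mrho_t\in L^1(\crc)$ (bounded), we have
\[ \nrg(u(r))=\tfrac12\sum_k (2\pi k)^2 e^{-2(2\pi k)^2 r}|\hat\rho_k|^2 . \]
For $r>0$ this series converges, since the exponential decay dominates the polynomial weight. Each summand is non-increasing and convex in $r$, and a convergent sum of such functions is again non-increasing and convex. This gives (1) and (2) for $s>0$. Alternatively one can differentiate directly for smooth $u(r)$, $r>0$:
\[ \frac{\dd}{\dd r}\nrg=-\intom(\partial_{xx}u)^2\dd x\le0,\qquad \frac{\dd^2}{\dd r^2}\nrg=2\intom(\partial_{xxx}u)^2\dd x\ge0, \]
after integrating by parts on the torus.

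\emph{Continuity at $0$ and the quotient bound.} If $\mrho_t\in H^1$, then $\sum_k(2\pi k)^2|\hat\rho_k|^2<\infty$, and monotone (or dominated) convergence of the series gives $e_t(s)\to\nrg(\mrho_t)$ as $s\searrow0$. Equivalently, $\partial_x(\htk^r\ast\mrho_t)\to\partial_x\mrho_t$ in $L^2$. Define $e_t(0):=\nrg(\mrho_t)$; then $e_t$ is convex and continuous on $[0,\infty)$. For a convex function differentiable at $s>0$, the chord slope satisfies
\[ \frac{e_t(s)-e_t(0)}{s}\le e_t'(s). \]
The chain rule gives $e_t'(s)=-\psi_t\intom(\partial_{xx}\rho^s_t)^2\dd x$. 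Rearranging yields \eqref{eq:nrgquot}.

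The main point needing care is this chord inequality at the endpoint $s=0$. It requires continuity of $e_t$ at $0$, which is exactly where the hypothesis $\mrho_t\in H^1$ enters; without it $e_t(0^+)$ could differ from $\nrg(\mrho_t)$, and in fact $e_t(0^+)=+\infty$ is possible. Everything else is standard smoothing by the heat kernel.
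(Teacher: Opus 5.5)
Your proof is correct and follows essentially the same route as the paper: for $\psi_t>0$ you use the smoothness of the heat-flow perturbation, the derivative formulas $e_t'(s)=-\psi_t\intom(\partial_{xx}\rho_t^s)^2\dd x\le0$ and $e_t''(s)\ge0$ for monotonicity and convexity, the $H^1$-convergence $\rho_t^s\to\mrho_t$ for continuity at $s=0$, and the tangent (chord) inequality $\nrg(\mrho_t)\ge e_t(s)-s\,e_t'(s)$. The Fourier-series argument is a harmless alternative. One small inaccuracy: in your closing remark, if $\mrho_t\notin H^1(\crc)$ then both $e_t(0^+)$ and $\nrg(\mrho_t)$ equal $+\infty$, so they do not actually differ.
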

\begin{proof}[Proof of Lemma \ref{lem:et}]
  If $\psi_t=0$, then $\rho^s_t\equiv\mrho_t$, and $\nrg(\rho^s_t)$ is independent of $s>0$;
  there is nothing to show.
  Assume $\psi_t>0$.
  Then $(s,x)\mapsto\rho^s_t(x)$ is smooth in $s>0$ and $x\in\crc$,
  and in particular, $e_t$ is an infinitely often differentiable map,
  with first and second derivative given by --- recalling \eqref{eq:convol009} ---
  \begin{align*}
    e_t'(s)
    &= \psi_t\intom \partial_x\rho_t^s\,\partial_{xx}\big(\partial_x\rho_t^s\big)\dd x
      = -\psi_t\intom \big(\partial_{xx}(\mk^s_t\ast\rho_t)\big)^2\dd x, \\
    e_t''(s)
    &= -2\psi_t^2\intom \partial_{xx}\rho_t^s\,\partial_{xx}\big(\partial_{xx}\rho_t^s\big)\dd x
      = 2\psi_t^2\intom \big(\partial_{xxx}(\mk^s_t\ast\rho_t)\big)^2\dd x,
  \end{align*}
  at any $s>0$.
  This immediately implies monotonicity and convexity of $e_t$.
  
  If additionally $\mrho_t\in H^1(\crc)$,
  then $\rho_t^s\to\mrho_t$ in $H^1(\crc)$ as $s\searrow0$ by the properties of convolution,
  and thus also $\lim_{s\searrow0}e_t(s)=\nrg(\mrho_t)$.
  Using $e_t$'s convexity and differentiability, the above tangent formula implies for any $s>0$ that
  \begin{align*}
    \nrg(\mrho_t) \ge e_t(s) - se_t'(s),
  \end{align*}
  and this is \eqref{eq:nrgquot}.
\end{proof}
\begin{proof}[Proof of Proposition \ref{prp:tfH2}]
  Now choose
  \begin{align}
    \label{eq:psifromPsi}
    \psi_t=\eps e^{t/\eps}\Psi_t^2,
  \end{align}
  with a cut-off function $\Psi\in C^\infty_c(\Rp)$
  such that $0\le\Psi\le1$ with $\Psi\equiv1$ on $(2\tau,T-2\tau)$ and $\Psi\equiv0$ outside of $(0,T)$,
  and such that $|\Psi'|\le1/\tau$.
  Then
   \begin{align*}
    \big|\psi_t'\big| = e^{t/\eps}\Psi_t\big|\Psi_t+2\eps\Psi'_t\big|
    \le e^{t/\eps}\Psi_t\big(1+2\eps/\tau\big)
    \le e^{T/\eps}\big(1+2\eps/\tau\big),
  \end{align*}
  where the last estimate follows from $\Psi_t=0$ for $t\ge T$.
  Using that $(\mrho,\mwelo)$ minimizes $\wed^\eps$,
  and combining \eqref{eq:nrgquot} with \eqref{eq:tfperturb2},
  recalling that $\kinet(\mrho_t,\mwelo_t)<\infty$ and $\nrg(\mrho_t)<\infty$ for a.e. $t>0$,
  we have accordingly --- assuming without loss of generality that $0<s<e^{-T/\eps}/(1+2\eps/\tau)$:
  %\begin{equation}
  %  \label{eq:var001}
  %  \begin{split}
  %    0 &\le \frac{\wed^\eps(\rho^s,\welo^s)-\wed^\eps(\mrho,\mwelo)}s \\
  %    &\le \int_0^\infty \left\{ \big(1+2\eps/\tau\big)\Psi_t \left[\kinet(\mrho_t,\mwelo_t) + 4\intom\big(\partial_x\sqrt{\rho^s_t}\big)^2\dd x\right]
  %      - \Psi_t^2\intom \big(\partial_{xx}\rho^s_t\big)^2\dd x \right\}\dd t.
  %  \end{split}
  %\end{equation}
    \begin{equation}
    \label{eq:var001}
    \begin{split}
      0 &\le \frac{\wed^\eps(\rho^s,\welo^s)-\wed^\eps(\mrho,\mwelo)}s \\
      &\le \int_0^\infty \left\{ \big(1+2\eps/\tau\big)\Psi_t \left[\kinet(\mrho_t,\mwelo_t) + 3\left(\intom \big(\partial_{xx}\rho_t^s\big)^2\dd x\right)^{1/2}\right]
        - \Psi_t^2\intom \big(\partial_{xx}\rho^s_t\big)^2\dd x \right\}\dd t.
    \end{split}
  \end{equation}
  Next, we estimate the middle term employing Young's inequality:
    \begin{align*}
    3\int_0^T  \big(1+2\eps/\tau\big)\Psi_t \left(\intom \big(\partial_{xx}\rho_t^s\big)^2\dd x\right)^{1/2} \dd t
    \le  \frac92\big(1+2\eps/\tau\big)^2T + \frac12\int_0^T \Psi_t^2 \intom \big(\partial_{xx}\rho^s_t\big)^2\dd x\dd t.
  \end{align*} 
  Substitute this into \eqref{eq:var001} and rearrange terms to obtain:
  \begin{align*}
    \frac12\int_0^T\Psi_t^2\intom \big(\partial_{xx}\rho^s_t\big)^2\dd x\dd t
    \le  \big(1+2\eps/\tau\big)\int_0^T\Psi_t \kinet(\mrho_t,\mwelo_t)\dd t + \frac92 \big(1+2\eps/\tau\big)^2T.
  \end{align*}
  Recalling that $\Psi\equiv1$ on $(2\tau,T-2\tau)$ and $\Psi\le1$,
  and that the time integral of $\kinet$ is bounded according to \eqref{eq:kinetbound},
  this yields
  \begin{align*}
    \int_{2\tau}^{T-2\tau}\intom \big(\partial_{xx}\rho^s_t\big)^2\dd x\dd t \le K\big(1+(\eps/\tau)^2\big)T,
  \end{align*}
  where $K$ does not depend on $\eps>0$. % provided that $0<s<e^{-T/\eps}$.
  At each $t>0$, we have that $\rho^s_t\to\mrho_t$ in $L^1(\crc)$ as $s\searrow0$,
  so by lower semi-continuity of the $H^2$-seminorm,
  \begin{align*}
    \int_{2\tau}^{T-2\tau}\intom \big(\partial_{xx}\mrho_t\big)^2\dd x\dd t 
    \le \liminf_{s\searrow0}\int_{2\tau}^{T-2\tau}\intom \big(\partial_{xx}\rho^s_t\big)^2\dd x\dd t
    \le K\big(1+(\eps/\tau)^2\big)T,
  \end{align*}
  which is \eqref{eq:tfH2}.
\end{proof}

%%%%%%%%%%%%%%%%%%%%%%%%%%%%%%%%%%%%%%%%%%%%%%%%%%%%%%%
\subsection{Regularity in space and time}
%%%%%%%%%%%%%%%%%%%%%%%%%%%%%%%%%%%%%%%%%%%%%%%%%%%%%%%
%
The goal of this section is to prove Theorem \ref{thm:tfpde} on the regularity of minimizers $\mrho$.
We start with an auxiliary result.
\begin{corollary}
  The weak formulation \eqref{eq:tfepsweak} can equivalently be written in the form
  \begin{align}
    \label{eq:tfepsweak1}    
    0= \int_0^{\infty}\intom\left(
    \eps\left\{-\partial_{tt}\Phi_t\,\mrho_t
    + \partial_{xx}\Phi_t\,\frac{(\mwelo_t)^2}{\mrho_t}\right\}
    - \partial_t\Phi_t\,\mrho_t
    + \partial_{xx} \Phi_t\,\Big(\mrho_t\partial_{xx} \mrho_t-\frac12(\partial_x \mrho_t)^2\Big)
    \right) \dd x \dd t.
  \end{align}
\end{corollary}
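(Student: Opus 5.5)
The two formulations differ only in the last two spatial terms, so I will show that for almost every $t$ in the support of $\Phi$
\begin{align*}
  -\intom\Big(\frac32\partial_{xx}\Phi_t\,(\partial_x\mrho_t)^2+\partial_{xxx}\Phi_t\,\mrho_t\,\partial_x\mrho_t\Big)\dd x
  = \intom \partial_{xx}\Phi_t\Big(\mrho_t\,\partial_{xx}\mrho_t-\frac12(\partial_x\mrho_t)^2\Big)\dd x,
\end{align*}
and then integrate in time. The regularity needed is supplied by Proposition \ref{prp:tfH2}. For any compact time interval containing $\operatorname{supp}\Phi$, choose $\tau$ small and $T$ large there. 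Then $\mrho_t\in H^2(\crc)$ for a.e.\ $t$ in the support, and $\partial_{xx}\mrho\in L^2$ on that time range.

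Fix such a $t$. Since $\mrho_t\in H^2(\crc)\subset W^{1,\infty}(\crc)$, the product $\mrho_t\,\partial_x\mrho_t$ lies in $W^{1,1}(\crc)$, and its weak derivative is
\[
\partial_x\big(\mrho_t\,\partial_x\mrho_t\big)=(\partial_x\mrho_t)^2+\mrho_t\,\partial_{xx}\mrho_t .
\]
Integrating by parts once on the torus, where there are no boundary terms, gives
\[
-\intom\partial_{xxx}\Phi_t\,\mrho_t\,\partial_x\mrho_t\dd x=\intom\partial_{xx}\Phi_t\big((\partial_x\mrho_t)^2+\mrho_t\,\partial_{xx}\mrho_t\big)\dd x .
\]
Adding the remaining term $-\frac32\intom\partial_{xx}\Phi_t\,(\partial_x\mrho_t)^2\dd x$ produces exactly the integrand of \eqref{eq:tfepsweak1}.

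Next I check that the new integrand is integrable in space-time, so that Fubini applies and the two time integrals coincide. The bound
\[
\big|\mrho_t\,\partial_{xx}\mrho_t\big|\le\|\mrho_t\|_{L^\infty}\,|\partial_{xx}\mrho_t|
\]
handles the first part, with $\|\mrho_t\|_{L^\infty}\lesssim \|\mrho_t\|_{H^1}$, which is controlled through \eqref{eq:L2H1L1} by $1+\nrg(\mrho_t)$. Since $\nrg(\mrho_t)$ is locally bounded in $L^1$ in time and $\partial_{xx}\mrho$ is in $L^2$, Cauchy-Schwarz in time gives local integrability. This requires $\nrg(\mrho)\in L^2_\loc$ in time, which follows from the $H^2$ bound and interpolation of $\partial_x\mrho$ between $L^1$ and $H^2$. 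The term $(\partial_x\mrho_t)^2$ is integrable because $\int\nrg(\mrho_t)\,\dd t<\infty$ locally.

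The reverse direction is the same identity read backwards, so the two forms are equivalent. The only delicate point is this justification of integrability and the $W^{1,1}$ product rule for the $H^2$ functions; the algebra itself is routine.
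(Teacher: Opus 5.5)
Your argument is the paper's: a single integration by parts in $x$ of $\partial_{xxx}\Phi_t\,\mrho_t\,\partial_x\mrho_t$, justified by $\mrho\in L^2_\loc((0,\infty);H^2(\crc))$ from Proposition~\ref{prp:tfH2}, followed by collecting the $(\partial_x\mrho_t)^2$ terms. The algebra and the product rule are fine.

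The paper does not spell out the space-time integrability of $\mrho\,\partial_{xx}\mrho$. Your attempt to supply it contains one false step. Interpolating between $L^1$ and $H^2$ only gives $\nrg(\mrho_t)\lesssim 1+\|\partial_{xx}\mrho_t\|_{L^2}^{6/5}$; the exponent is forced by the mass-preserving scaling $\lambda\rho(\lambda x)$. So you get $\nrg(\mrho)\in L^{5/3}_\loc$ in time, not $L^2_\loc$.

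You never actually needed $L^2_\loc$. You bounded $\|\mrho_t\|_{L^\infty}$ by $1+\nrg(\mrho_t)$ instead of by its square root. Since $\mrho_t$ has fixed mass, $\|\mrho_t\|_{L^\infty}\le C\big(1+\|\partial_x\mrho_t\|_{L^1}\big)\le C\big(1+\nrg(\mrho_t)^{1/2}\big)$. Hence $\|\mrho_t\|_{L^\infty}^2\lesssim 1+\nrg(\mrho_t)$ is locally integrable in time by \eqref{eq:potenzbound}. Cauchy--Schwarz in time against $\|\partial_{xx}\mrho_t\|_{L^2}\in L^2_\loc$ then gives $\mrho\,\partial_{xx}\mrho\in L^1_\loc((0,\infty)\times\crc)$. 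Alternatively, use $\|\mrho_t\|_{L^\infty}\lesssim 1+\|\partial_{xx}\mrho_t\|_{L^2}^{2/5}$ directly.

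With that repair the proof is complete.
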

\begin{proof}
  With the regularity $\mrho\in L^2_\loc((0,T);H^2(\crc))$ from Proposition \ref{prp:tfH2} at hand,
  we can integrate by parts in \eqref{eq:tfepsweak} as follows,
  \begin{align*}
    -\intom \partial_{xxx}\Phi_t\, \mrhot\,\partial_{x}\mrhot \dd x
    = \intom \partial_{xx}\Phi_t\,\big( (\partial_x \mrhot)^2+\mrhot\,\partial_{xx}\mrhot \big)\dd x.
  \end{align*}
  which yields the form \eqref{eq:tfepsweak1} above.
\end{proof}
\begin{proof}[Proof of Theorem \ref{thm:tfpde}]
  Existence of a minimizer $(\mrho,\mwelo)$ and regularity $\mrho\in L^2_\loc((0,\infty);H^2(\crc))$
  have already been shown, respectively, in Proposition \ref{prp:x1} and Proposition \ref{prp:tfH2}.
  It remains to verify that
  \begin{align}
    \label{eq:thatsnew}
    \mrho\in W^{2,1}_\loc\big((0,\infty); W^{2,\infty}(\crc)^\ast\big).
  \end{align}
  By (the weak form of) the continuity equation,
  \begin{align*}
    -\int_0^T\intom \partial_t\Phi_t\,\mrhot\dd x\dd t = \int_0^T\intom \partial_x\Phi_t\,\mwelot \dd x\dd t.
  \end{align*}
  Substitute this in \eqref{eq:tfepsweak1},
  and choose $\Phi(t;x)=\phi(x)\psi(t)$ with $\phi\in C^\infty(\crc)$ and $\psi\in C^\infty_c((0,T))$
  to obtain
  \begin{align*}
    \eps\int_0^T\psi''_t \left(\intom \phi_t\,\mrhot\dd x\right)\dd t
    = \int_0^T\psi_t \left(\intom\left[
    \phi_t'\,\mwelot
    +\phi''_t\,\left(\eps\frac{(\mwelot)^2}{\mrhot}+\mrhot\,\partial_{xx}\mrhot-\frac12(\partial_x\mrhot)^2\right)
    \right] \dd x\right)\dd t.
  \end{align*}
  Now use that $\mwelo$, $(\mwelo)^2/\mrho$, $(\partial_x\mrho)^2$ and $\mrho\,\partial_{xx}\mrho$ all belong to $L^1_\loc((0,\infty)\times\crc)$.
  This shows \eqref{eq:thatsnew}.
\end{proof}

%%%%%%%%%%%%%%%%%%%%%%%%%%%%%%%%%%%%%%%%%%%%%%%%%%%%%%% 
\subsection{Limit of vanishing regularization}
%%%%%%%%%%%%%%%%%%%%%%%%%%%%%%%%%%%%%%%%%%%%%%%%%%%%%%%
%
%
In this section, we prove Theorem \ref{thm:tflimit}.
The proof reduces essentially to an application of a generalized Aubin-Lions compactness theorem,
recalled in Theorem \ref{thm:savare} in the Appendix.
\begin{proof}[Proof of Theorem \ref{thm:tflimit}]
  Consider a vanishing sequence $(\eps_n)_{n\in\N}$ of regularization parameters.
  Fix some $T>0$ and $\tau\in(0,T/4)$ throughout the proof.
  We are going to show existence of a (non-relabeled) subsequence $(\eps_n)_{n\in\N}$
  such that $\bar\rho^{\eps_n}$ converges to a weakly continuous limit curve $\bar\rho^*:[0,T]\to\prbr{\Rnn}$
  \begin{itemize}
  \item strongly in $C([0,T];\BL)$,
  \item weakly in $L^2([2\tau,T-2\tau];H^2(\crc)\big)$, and
  \item strongly in $L^2([2\tau,T-2\tau];H^1(\crc)\big)$.
  \end{itemize}
  The claimed local convergence \eqref{eq:tflimit9} on the semi-infinite interval $(0,+\infty)$
  then follows by means of a diagonal argument with $T\to\infty$ and $\tau\searrow0$.
  
  To begin with, observe that
  from the $\eps$-uniform bound \eqref{eq:kinetbound} on the time-integrated kinetic energy of minimizers,
  one obtains along the same lines as in \eqref{eq:BLbyK} the following equi-continuity estimate:
  \begin{align}
    \label{eq:BLlimit}
    \BL\big(\mrho_{t_1},\mrho_{t_0}\big) \le \sqrt{\nrg(\bar\rho_0)}\, |t_1-t_0|^{1/2},
  \end{align}
  Further, recall that the metric space $(\prbr{\Rnn},\BL)$ is compact.
  The uniform convergence of a subsequence in $\BL$, uniformly with respect to time,
  to some $\bar\rho^*\in C([0,T];\BL)$ now follows
  by a metric version of the Arzela-Ascoli theorem, see e.g.~\cite[Proposition 3.3.1]{AGS}.
  Note that, in particular,
  \[ \lrho(0) = \lim_{\eps\searrow0}\mrho(0) = \bar\rho_0. \]  
  Next, without loss of generality, we may assume that $\eps_n<\tau$,
  so that the key estimate \eqref{eq:tfH2} simplifies to
  \begin{align}
    \label{eq:tfH2b}
    \int_{2\tau}^{T-2\tau} \intom \big(\partial_{xx}\mrho_t\big)^2\dd x \dd t \le 2KT.
  \end{align}
  By means of Alaoglu's theorem
  --- passing to a further subsequence if necessary ---
  $\bar\rho^{\eps_n}$ converges weakly in $L^2([2\tau,T-2\tau];H^2(\crc))$;
  that weak limit must coincide with $\bar\rho^*$ obtained above.

  To obtain also strong convergence in $L^2([2\tau,T-2\tau];H^1(\crc))$,
  the generalized Aubin-Lions theorem \ref{thm:savare} is applied with the following parameters:
  Banach space $X:=H^1(\crc)$, closed convex subset $U:=X\cap\prbr{I}$,
  as well as $\fnc$ and $g$ given, respectively, by 
  \begin{align*}
    \fnc(\rho) =
    \begin{cases}
      \intom \big(\partial_{xx}\rho\big)^2\dd x & \text{if $\rho\in H^2(\crc)$}, \\
      +\infty & \text{otherwise},
    \end{cases},
                \qquad
                g(\rho,\eta) = \BL(\rho,\eta).
  \end{align*}
  Then tightness \eqref{eq:tight} with respect to $\fnc$ follows immediately from estimate \eqref{eq:tfH2b} above,
  and integral equi-continuity \eqref{eq:equicont} with respect to $g$
  is a consequence of the H\"older estimate \eqref{eq:BLlimit}.
  A --- potentially further --- subsequence $\bar\rho^{\eps_n}$ converges in $H^1(\crc)$, in measure with respect to $t\in[2\tau,T-2\tau]$.
  Strong convergence in $L^2([2\tau,T-2\tau];H^1(\crc))$ is now easily concluded by means of Vitali's theorem,
  using that the interpolation inequality
  \begin{align*}
    \big\|\partial_x\mrho_t\big\|_{L^3(\crc)} \le A \big\|\partial_{xx}\mrho_t\big\|_{L^2(\crc)}^{2/3}\big\|\mrho_t\big\|_{L^1(\crc)}^{1/3},
  \end{align*}
  with some universal constant $A$
  in combination with the a priori estimate \eqref{eq:tfH2}
  implies that
  \begin{align*}
    \int_{2\tau}^{T-2\tau}\big\|\partial_x\mrhot\big\|_{L^2}^3\dd t
    \le \int_{2\tau}^{T-2\tau}\intom \big|\partial_x\mrhot\big|^3\dd x\dd t
    \le A^3\int_{2\tau}^{T-2\tau}\intom \big(\partial_{xx}\mrho_t\big)^2 \dd x\dd t
    \le A^3KT.
  \end{align*}
  As mentioned above, a diagonal argument for $T\to+\infty$ and $\tau\searrow0$
  produces a globally defined weakly continuous curve $\bar\rho^*:[0,\infty)\to\prbr{\Rnn}$
  and a vanishing sequence $(\eps^*_n)_{n\in\N}$ such that $\bar\rho^{\eps_n^*}$ converges to $\bar\rho^*$
  as stated in \eqref{eq:tflimit9}, i.e.,
  \begin{align}
    \label{eq:wrapup}
    \bar\rho^{\eps^*_n}\to\bar\rho^* \quad
    \text{weakly in $L^2_\loc((0,\infty);H^2(\crc))$}, \quad
    \text{strongly in $L^2_\loc((0,\infty);H^1(\crc))$}.
  \end{align} 
  Recall that estimate \eqref{eq:tfH2b} holds for all $\eps\le\tau$, with a uniform constant $K$;
  by lower semi-continuity of the $H^2(\crc)$-norm,
  it follows that the limit $\bar\rho^*$ is regular up to the origin:
  \begin{align*}
    \bar\rho^*\in L^2\big([0,\infty);H^2(\crc)\big).
  \end{align*}
  It remains to verify the weak formulation \eqref{eq:tf0weak}.
  To that end, fix some test function $\Phi\in C_c^\infty((0,\infty)\times\crc)$,
  and pass to the limit $\eps^*_n\to0$ in the $\eps$-regularized weak form \eqref{eq:tfepsweak}. 
  The terms with pre-factor $\eps$ in \eqref{eq:tfepsweak} vanish
  since both $\mrho$ and $(\mwelo)^2/\mrho$ are $\eps$-uniformly bounded in $L^1([0,T]\times\crc)$,
  see \eqref{eq:kinetbound}.
  For the remaining terms, the convergences in \eqref{eq:wrapup} are sufficient to pass to the respective limits.
\end{proof}

%%%%%%%%%%%%%%%%%%%%%%%%%%%%%%%%%%%%%%%%%%%%%%%%%%%%%%%
%%%%%%%%%%%%%%%%%%%%%%%%%%%%%%%%%%%%%%%%%%%%%%%%%%%%%%%
\section{Cahn-Hilliard case --- nonlinear mobility}
\label{sct:ch}
%%%%%%%%%%%%%%%%%%%%%%%%%%%%%%%%%%%%%%%%%%%%%%%%%%%%%%%
%%%%%%%%%%%%%%%%%%%%%%%%%%%%%%%%%%%%%%%%%%%%%%%%%%%%%%%
%
This section deals with the WED approximation to \eqref{eq:intro-ch}
with nonlinear mobility $\mob(\rho)$ which is a uniformly concave function on $I=[0,1]$
with $\mob(0)=\mob(1)=0$.
We work under the assumptions that
\begin{align}
  \label{eq:assumption_m}
  \mob(r)\le\frac12,\quad |\mob'(r)|\le1,\quad\mob''(r)\le-1\qquad\text{for all $r \in I$}.
\end{align}
A possible choice is clearly $\mob(r)=r(1-r)$, 
but in the proofs in this section would not simplify significantly by assuming $\mob$ of this special form.

%%%%%%%%%%%%%%%%%%%%%%%%%%%%%%%%%%%%%%%%%%%%%%%%%%%%%%%
\subsection{A regularization}
%%%%%%%%%%%%%%%%%%%%%%%%%%%%%%%%%%%%%%%%%%%%%%%%%%%%%%%
%
For technical reasons, we perform a regularization of the mobility:
we introduce a parameter $\mu=\mu(\eps)$
that satisfies
\begin{align}
  \label{eq:epsmu}
  0<\mu(\eps)\le\frac12 \quad \text{for each $\eps\in(0,1)$},\qquad
  \frac\eps{\mu(\eps)^3}\searrow0 \quad \text{as $\eps\searrow0$},
\end{align}
but is otherwise arbitrary.
Note that that $\mob+\mu\le1$.

Accordingly, we replace the WED functional $\wed^\eps$
by $\hat\wed^\eps$ with $\mob+\mu(\eps)$ in place of $\mob$,
i.e., the kinetic term $\kinet$ in $\hat\wed^\eps$ is
\begin{align}
  \label{eq:intromu}
  \hat\kinet(\rho,\welo) = \frac12\intom \frac{\welo^2}{\mob(\rho)+\mu(\eps)}\dd x,
\end{align}
where the fraction is now genuine since $\mob(r)+\mu(\eps)$ is positive for every $r\in I$.
The existence of a minimizer $(\mrho,\mwelo)\in\curves(\bar\rho_0)$
for this modified functional is still guaranteed by Proposition \ref{prp:x1},
and it satisfies in particular the constraint $\mrho(t;x)\in I$ for a.e. $(t;x)\in\Rnn\times\crc$. 

The modification \eqref{eq:intromu} of the kinetic term resembles the strictly parabolic regularization,
that has been performed e.g.~in \cite{EG}.
However, the analytic effect is very different, since the density constraint to $I=[0,1]$ is still active.
A priori, there is no reason to expect improved regularity of $\mrho$ near points
where the extremal values $0$ or $1$ are attained.
When we derive a priori estimates and Euler-Lagrange equations below,
we need to use very particular variations that respect the constraint.

%%%%%%%%%%%%%%%%%%%%%%%%%%%%%%%%%%%%%%%%%%%%%%%%%%%%%%%
\subsection{A priori estimates}
%%%%%%%%%%%%%%%%%%%%%%%%%%%%%%%%%%%%%%%%%%%%%%%%%%%%%%%
%  
The aim of this section is to establish the following regularity of $(\mrho,\mwelo)$. 
\begin{proposition}
  \label{prp:regularity}
  For each $\eps\in(0,1)$ and $T>4$, the corresponding minimizer $(\mrho,\mwelo)$ satisfies
  \[ \mrho \in L^2\big([2\eps,T-2\eps],H^2(\crc)\big), \quad \mwelo \in L^2\big([2\eps,T-2\eps],H^1(\crc)\big), \]
  and 
  \begin{align}
    \label{eq:regularity}
    \int_{2\eps}^{T-2\eps}\intom\left\{
    \frac12\big(\partial_{xx}\mrho\big)^2
    +\frac\eps4\intom\big(\mwelo\,\partial_x\mrho\big)^2
    + \frac{\eps\mu}8\intom(\partial_x\mwelo)^2
    \right\}\dd x\dd t
    \le 18T + 5\nrg(\bar\rho_0).
  \end{align}
\end{proposition}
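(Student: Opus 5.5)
The plan is to copy the heat-flow variation behind Proposition \ref{prp:tfH2}, now for the modified functional $\hat\wed^\eps$. For $\psi\in C^\infty_c(\Rp)$ I set $\rho^s_t=\mk^s_t\ast\mrho_t$ and $\welo^s_t=\mk^s_t\ast\mwelo_t-s\psi'_t\,\partial_x\mk^s_t\ast\mrho_t$. These pairs lie in $\curves(\bar\rho_0)$ by the proof of Lemma \ref{lem:heatperturb1}. Since convolution with a probability kernel preserves the range $[0,1]$, the constraint $\rho\in I$ is respected.

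\textbf{Potential term.} Lemma \ref{lem:et} applies verbatim and gives the good term $-\psi_t\intom(\partial_{xx}\rho^s_t)^2\dd x$.

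\textbf{Kinetic term.} This is where concavity enters. First, the map $(r,w)\mapsto w^2/(\mob(r)+\mu)$ is jointly convex. Second, $\mob(\mk\ast\rho)\ge\mk\ast\mob(\rho)$ by concavity. Together with Jensen these give $\hat\kinet(\mk\ast\mrho,\mk\ast\mwelo)\le\hat\kinet(\mrho,\mwelo)$, which is \emph{not sharp}. To produce the extra good terms in \eqref{eq:regularity}, I would differentiate $s\mapsto\intom(\mk^s\ast\mwelo)^2/(\mob(\mk^s\ast\mrho)+\mu)$ directly instead of bounding it. Writing $w=\welo^s_t$, $r=\rho^s_t$ and $D=\mob(r)+\mu$, one has $\partial_s r=\psi\,\partial_{xx}r$ and $\partial_s w=\psi\,\partial_{xx}w$. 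Integration by parts then gives
\[
\frac{\dn}{\dn s}\intom\frac{w^2}{D}\dd x=-\psi\intom\left\{\frac{2(\partial_xw)^2}{D}-\frac{4w\,\mob'\,\partial_xr\,\partial_xw}{D^2}+\frac{2w^2\mob'^2(\partial_xr)^2}{D^3}-\frac{w^2\mob''(\partial_xr)^2}{D^2}\right\}\dd x .
\]
The first three terms combine into $2D\big(\partial_x(w/D)\big)^2\ge0$. The last term is $\ge(w\,\partial_xr)^2/D^2\ge(w\,\partial_xr)^2$ because $-\mob''\ge1$ and $D\le1$.

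The dissipation in $s$ therefore controls $\intom(w\,\partial_xr)^2\dd x$. It also controls $\mu\intom(\partial_xw)^2\dd x$, after expanding $\partial_xw=D\,\partial_x(w/D)+(w/D)\,\mob'\,\partial_xr$ and using $D\ge\mu$, $|\mob'|\le1$. This derivative is nonpositive, so its $s$-derivative is monotone in the right direction. A tangent/convexity argument like the one in Lemma \ref{lem:et}, or simply integration over $[0,s]$ followed by letting $s\searrow0$ with lower semicontinuity, turns it into a bound of the difference quotient by $-\psi_t$ times these good quantities. The error from the $s\psi'$ correction in $\welo^s$ is handled as in Lemma \ref{lem:tfperturb2}, via $(a+rb)^2\le(1+|r|)a^2+|r|(1+|r|)b^2$. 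The resulting term $\intom(\partial_x\mk\ast\mrho)^2/D\,\dd x\le\mu^{-1}\ldots$ is bad because of the factor $\mu^{-1}$. Instead I would bound $(\partial_xr)^2/(\mob(r)+\mu)$ by $C\|\partial_{xx}r\|_{L^2}+C$, a Glaeser-type estimate valid for concave $\mob$ vanishing at the endpoints, mirroring \eqref{eq:villani}.

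\textbf{Assembly.} I choose $\psi_t=\eps e^{t/\eps}\Psi_t^2$ with cutoff scale $\tau=\eps$, which is why the interval is $[2\eps,T-2\eps]$; then $|\psi'|\le3e^{t/\eps}\Psi$. Minimality gives $0\le$ the difference quotient. After Young's inequality and \eqref{eq:kinetbound} (which yields the $\nrg(\bar\rho_0)$ contribution), this leads to \eqref{eq:regularity} with explicit constants. Finally I pass $s\searrow0$ by lower semicontinuity: weak $H^2$ for $\rho^s$, weak $H^1$ for $\welo^s$, and weak $L^2$ for the product $\welo^s\,\partial_x\rho^s$, using strong convergence of $\partial_x\rho^s$.

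\textbf{Main obstacle.} I expect the hardest step to be making the $s$-derivative computation rigorous and uniform near $s=0$ given the weak regularity of $\mwelo$. I would try to show that the map $s\mapsto\hat\kinet(\mk^s\ast\mrho,\mk^s\ast\mwelo)$ is nonincreasing with derivative bounded as above, and to integrate rather than use pointwise convexity, which is no longer evident. A further difficulty is controlling the Fisher-type term $\intom(\partial_xr)^2/(\mob(r)+\mu)\dd x$ by $\|\partial_{xx}r\|_{L^2}$ independently of $\mu$.
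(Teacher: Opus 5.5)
Your plan is essentially the paper's proof. It uses the same heat-flow perturbation with $\psi_t=\eps e^{t/\eps}\Psi_t^2$ at scale $\tau=\eps$, and the same integration by parts producing the square $(\mob(\rho^s)+\mu)\bigl(\partial_x\tfrac{\welo^s}{\mob(\rho^s)+\mu}\bigr)^2$ and the $-\mob''$ term. It also uses the same control of $\intom(\partial_x\rho^s)^2/(\mob(\rho^s)+\mu)\dd x$, which the paper makes concrete via $\mob(r)\ge r(1-r)/2$ and \eqref{eq:villani} applied to $\rho^s$ and $1-\rho^s$. The passage $s\searrow0$ by weak lower semicontinuity is the same as well.

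The only deviation is harmless: you split off the $s\psi'$ correction before differentiating, whereas the paper differentiates the full $\hat\kinet(\rho^s,\welo^s)$ and then estimates the resulting cross term. However, to reach the stated constant $18T+5\nrg(\bar\rho_0)$ you must split that cross term asymmetrically, $\Psi\,|\Psi+2\eps\Psi'|\,|ab|\le\frac{\Psi^2}{2}b^2+\frac92a^2$ with $a=\welo^s/\sqrt{\mob(\rho^s)+\mu}$ and $b=\partial_x\rho^s/\sqrt{\mob(\rho^s)+\mu}$. The symmetric inequality $(a+rb)^2\le(1+|r|)a^2+|r|(1+|r|)b^2$ from the thin-film case only yields a larger multiple of $T$.
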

Before proceeding to the proof of Proposition \ref{prp:regularity},
we note an immediate consequence of the appearance of the $L^2$-norm of $\partial_x\mwelo$ in \eqref{eq:regularity}:
\begin{corollary}
  \label{cor:wxL2}
  For each $\eps>0$,
  the continuity equation $\partial_t\mrho=-\partial_x\mwelo$ is an equality
  between quantities in $L^2((2\eps,T-2\eps)\times\crc)$.
  Consequently, $\mrho\in H^1((2\eps,T-2\eps)\times\crc)$.
\end{corollary}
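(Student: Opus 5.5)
Taking Proposition \ref{prp:regularity} as given, the corollary is a bookkeeping argument. The estimate \eqref{eq:regularity} contains the term $\frac{\eps\mu}{8}\int\!\!\int(\partial_x\mwelo)^2$. For fixed $\eps$ (hence fixed $\mu=\mu(\eps)>0$), this gives $\partial_x\mwelo\in L^2((2\eps,T-2\eps)\times\crc)$, with norm bounded by $\big(8(18T+5\nrg(\bar\rho_0))/(\eps\mu)\big)^{1/2}$. The plan has three steps.

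First, identify the distributional time derivative of $\mrho$. The weak continuity equation \eqref{eq:weakcont}, restricted to test functions $\theta\in C^1_c((2\eps,T-2\eps)\times\crc)$, reads
\[ \int\!\!\int \partial_t\theta\,\mrho = -\int\!\!\int \partial_x\theta\,\mwelo . \]
Since $\mwelo_t\in H^1(\crc)$ for a.e.\ $t$, we may integrate by parts in $x$ on the torus, where there are no boundary terms, and obtain
\[ \int\!\!\int \partial_t\theta\,\mrho = \int\!\!\int \theta\,\partial_x\mwelo . \]
Hence $\partial_t\mrho=-\partial_x\mwelo$ as distributions, and the right-hand side lies in $L^2$. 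The equation is therefore an equality of $L^2((2\eps,T-2\eps)\times\crc)$ functions.

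Second, verify the remaining components of the $H^1$ norm. We have $\mrho\in L^2$ because $0\le\mrho\le1$. We have $\partial_x\mrho\in L^2$ because $\mrho\in L^2([2\eps,T-2\eps];H^2(\crc))$ by Proposition \ref{prp:regularity}; alternatively, use the bound on $\nrg(\mrho_t)$ following from \eqref{eq:trivialbound}. Together with the $L^2$ bound on $\partial_t\mrho$ from the first step, this gives $\mrho\in H^1((2\eps,T-2\eps)\times\crc)$.

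The only point needing care is the spatial integration by parts in the first step. It requires $\mwelo_t\in H^1(\crc)$ for a.e.\ $t$, which the proposition supplies. Measurability in $(t,x)$ is inherited from the admissible class $\curves(\bar\rho_0)$. No step is a genuine obstacle.
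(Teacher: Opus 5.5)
Your proposal is correct and is essentially the paper's argument. The paper calls the corollary an immediate consequence of the term $\frac{\eps\mu}{8}\int\!\!\int(\partial_x\mwelo)^2$ in \eqref{eq:regularity}, which via the distributional continuity equation puts $\partial_t\mrho=-\partial_x\mwelo$ in $L^2((2\eps,T-2\eps)\times\crc)$, and your explicit handling of the spatial integration by parts and of the remaining components $\mrho,\partial_x\mrho\in L^2$ only spells out what the paper leaves implicit.
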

In the following, let some terminal time $T>4$ be fixed.
Similar to the proof of Proposition \ref{prp:tfH2},
we consider perturbations $(\rho^s,\welo^s)$ of the minimizer $(\mrho,\mwelo)$ by means of a properly scaled heat flow.
Define $\psi\in C^\infty_c(\Rp)$ as in \eqref{eq:psifromPsi} for $\tau=\eps$, i.e.,
\begin{align}
  \label{eq:psifromPsi2}
  \psi_t=\eps e^{t/\eps}\Psi_t^2,
\end{align}
where $\Psi\in C^\infty_c(\Rp)$ is a cut-off function
with $\Psi\equiv1$ on $(2\eps,T-2\eps)$, with $\Psi\equiv0$ outside of $(0,T)$,
and such that $|\Psi'|\le\eps^{-1}$.
% For later reference, we state that, similarly as in \eqref{eq:est_psi_prime},
% \begin{align}
%   \label{eq:est_psi_prime2}
%   \big|\psi'_t|\le \frac3\eps\psi_t \le 3e^{t/\eps}.
% \end{align}
With the rescaled heat kernels $\mk$ from \eqref{eq:Gtilde},
the perturbation of $(\mrho,\mwelo)$ for parameter $s\in(0,1)$ at any given time $t>0$
is defined as
\begin{align*}
  \rho_t^s = \mk_t^s\ast\mrho_t, \quad
  \welo_t^s = \mk_t^s\ast\mwelo_t - s\psi'(t)\,\partial_x\mk_t^s\ast\mrho_t.
\end{align*}
Recalling the properties \eqref{eq:convol009} of $\mk$,
it follows that $\rho_t^s$ and $\welo_t^s$ are differentiable in $s\in(0,1)$,
with
\begin{align*}
  \partial_s \rho_t^s=\psi_t\,\partial_{xx}\rho_t^s, \quad
  \partial_s \welo_t^s=\psi_t\, \partial_{xx}\welo_t^s -\psi'_t\, \partial_{x}\rho_t^s.
\end{align*}
Define accordingly
\begin{align*}
  \hat\kinet_t^s
  :=\hat\kinet(\rho_t^s,\welo_t^s)
  =\frac{1}{2} \intom \frac{(\welo_t^s)^2}{\mob(\rho_t^s)+\mu}\dd x,
  \quad
  e_t^s := \nrg(\rho_t^s) =\frac{1}{2}\intom \big(\partial_{xx}\rho_t^s\big)^2\dd x.
\end{align*}
The proof of Proposition \ref{prp:regularity} will be obtained in a series of lemmas.
\begin{lemma}
  $(\rho^s,\welo^s)\in\curves$ for each $s>0$.
\end{lemma}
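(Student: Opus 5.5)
The plan is to follow the proof of Lemma \ref{lem:heatperturb1} almost verbatim. The perturbation has exactly the same form; only the mobility in the kinetic term differs, and that term plays no role in membership in $\curves(\bar\rho_0)$. The one genuinely new point is that the constraint $I=[0,1]$ must now be preserved.

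For (C1), fix $t>0$. If $\psi_t=0$, then $\mk^s_t$ is the Dirac mass and $\rho^s_t=\mrho_t$, so there is nothing to check. If $\psi_t>0$, then $\mk^s_t$ is a non-negative kernel of unit mass. Convolution with it preserves the total mass $\mathrm m$, and it maps functions with values in $[0,1]$ to functions with values in $[0,1]$, since it is an average. Hence $\rho^s_t\in\prbr{I}$. This averaging property is the essential reason the heat flow is an admissible variation under the constraint.

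For (C2), note that $s\psi_t$ depends smoothly on $t$ and has compact support in $\Rp$. Write $h=s\psi_t$ and $h'=s\psi_{t'}$, and split
\[
\BL(\rho^s_t,\rho^s_{t'})\le \BL\big(\htk^{h}\ast\mrho_t,\htk^{h}\ast\mrho_{t'}\big)+\BL\big(\htk^{h}\ast\mrho_{t'},\htk^{h'}\ast\mrho_{t'}\big).
\]
The first term is at most $\BL(\mrho_t,\mrho_{t'})$, because convolution with an even probability kernel maps $1$-Lipschitz test functions to $1$-Lipschitz functions. The second term is bounded by $C\,|h-h'|^{1/2}$ uniformly in the density, by the standard heat-flow estimate for the Wasserstein or $\BL$ distance. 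Together these give continuity of $t\mapsto\rho^s_t$ in $\BL$. Moreover $\rho^s_0=\mrho_0=\bar\rho_0$, since $\psi$ vanishes near $0$.

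For (C3), use the bound $\|\partial_x\mk^s_t\|_{L^1}\le C(s\psi_t)^{-1/2}$. On $\operatorname{supp}\psi$ this is multiplied by $s|\psi'_t|$; near the boundary of the support, $\psi$ behaves like $\Psi^2$ and $\psi'$ like $\Psi\Psi'$, so the product $s|\psi'_t|(s\psi_t)^{-1/2}$ remains bounded. Combined with $\|\mk^s_t\ast\mwelo_t\|_{L^1}\le\|\mwelo_t\|_{L^1}$, this gives local integrability of $\welo^s$. I expect this boundedness near the edge of the support to be the only slightly delicate point, and the squared cutoff $\psi=\eps e^{t/\eps}\Psi^2$ is exactly what makes it work.

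For (C4), take a test function $\theta$ and move the convolutions onto $\theta$, using that $\mk^s_t$ is even and $\partial_x\mk^s_t$ is odd. By \eqref{eq:convol009}, $\partial_t\mk^s_t=s\psi'_t\,\partial_{zz}\mk^s_t$, so the extra flux term $-s\psi'_t\,\partial_x\mk^s_t\ast\mrho_t$ exactly cancels the time derivative of the kernel. This leaves
\[
\int_0^\infty\!\intom\big(\partial_t(\mk^s_t\ast\theta_t)\,\mrho_t+\partial_x(\mk^s_t\ast\theta_t)\,\mwelo_t\big)\dd x\dd t ,
\]
which vanishes by (C4) for $(\mrho,\mwelo)$, because $\mk^s\ast\theta$ is again a smooth, compactly supported test function.
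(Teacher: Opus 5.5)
Your proposal is correct and follows the paper's route: the paper simply states that the proof is identical to that of Lemma~\ref{lem:heatperturb1}, whose key step is your argument for (C4), moving the convolutions onto $\theta$ and using \eqref{eq:convol009}. Beyond that, you spell out (C1)--(C3) explicitly, including confinement to $[0,1]$ by averaging and the bound on $s|\psi'_t|\,\|\partial_x\mk^s_t\|_{L^1}$; the paper leaves these as ``readily seen.''
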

The proof is identical to the one for Lemma \ref{lem:heatperturb1}.
\begin{lemma}
  \label{lem:continuity}
  $\hat\kinet_t^s$ and $e_t^s$ are continuous at $s=0$.
  Moreover, $\hat\kinet_t^s\to\hat\kinet_t^0$ strongly in $L^1((0,T))$.
\end{lemma}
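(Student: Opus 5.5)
Fix $t>0$ with $\hat\kinet(\mrho_t,\mwelo_t)<\infty$ and $\nrg(\mrho_t)<\infty$; this holds for a.e.\ $t$ by Proposition \ref{prp:generalapriori}. If $\psi_t=0$, both quantities are constant in $s$ and there is nothing to prove. So assume $\psi_t>0$, and write $\sigma=s\psi_t\searrow0$ for the heat-kernel time.

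\emph{Continuity of $e^s_t$.} This step is the same as in Lemma \ref{lem:et}(3). Since $\mrho_t\in H^1(\crc)$ and convolution with $\htk^\sigma$ is an approximate identity on $H^1(\crc)$, we have $\rho^s_t\to\mrho_t$ in $H^1(\crc)$. Hence $e^s_t\to e^0_t$.

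\emph{Continuity of $\hat\kinet^s_t$.} First, $\mk^s_t\ast\mwelo_t\to\mwelo_t$ in $L^2(\crc)$. Indeed, $\mwelo_t\in L^2$ because $\mob+\mu\le1$ gives $\int(\mwelo_t)^2\le 2\hat\kinet_t^0$. The correction term satisfies
\[
\|s\psi'_t\,\partial_x\mk^s_t\ast\mrho_t\|_{L^2}=\frac{|\psi'_t|}{\psi_t}\,\sigma\,\|\partial_x\rho^s_t\|_{L^2}\le \frac{|\psi'_t|}{\psi_t}\,\sigma\,\|\partial_x\mrho_t\|_{L^2}\to0 .
\]
Therefore $\welo^s_t\to\mwelo_t$ in $L^2(\crc)$. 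Up to subsequences we also have $\rho^s_t\to\mrho_t$ a.e., and the denominator $\mob(\rho^s_t)+\mu\ge\mu$ is uniformly positive and continuous. Convergence of $\welo^s_t$ in $L^2$ together with the bounded a.e.\ convergence of the factor $1/(\mob(\rho^s_t)+\mu)$ gives $\hat\kinet^s_t\to\hat\kinet^0_t$; the product of an $L^1$-convergent sequence and a bounded a.e.-convergent sequence converges in $L^1$. The subsequence is irrelevant because the limit is unique.

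\emph{$L^1((0,T))$ convergence.} I will use dominated convergence in $t$, which requires an $s$-uniform integrable majorant. Using $\mob+\mu\ge\mu$ and $(a+b)^2\le2a^2+2b^2$,
\[
\hat\kinet^s_t\le\frac1\mu\Big(\|\mk^s_t\ast\mwelo_t\|_{L^2}^2+s^2|\psi'_t|^2\|\partial_x\rho^s_t\|_{L^2}^2\Big)
\le\frac1\mu\Big(\|\mwelo_t\|_{L^2}^2+2|\psi'_t|^2\nrg(\mrho_t)\Big),
\]
valid for $s<1$. Both terms are integrable on $(0,T)$. The first is bounded by $2\hat\kinet^0_t$, which is integrable by \eqref{eq:kinetbound}. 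In the second, $\psi'$ is bounded on $[0,T]$ and $\nrg(\mrho_t)$ is integrable by \eqref{eq:potenzbound}. Pointwise a.e.\ convergence was shown above, so dominated convergence yields $\hat\kinet^s_\cdot\to\hat\kinet^0_\cdot$ in $L^1((0,T))$.

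The step I expect to be delicate is the pointwise passage in the quotient. This is where $\mu>0$ is essential: for $\mu=0$ the Jensen/convexity argument of Lemma \ref{lem:tfperturb2} would give only an upper bound. Here positivity of $\mu$ makes the argument elementary.
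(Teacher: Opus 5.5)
Your proof is correct and follows the paper's argument: at fixed $t$, the $H^1$-convergence of $\rho^s_t$, the $L^2$-convergence of $\welo^s_t$ and the uniform positivity of $\mob+\mu$ give continuity at $s=0$. For the $L^1((0,T))$ statement the paper instead invokes $L^2((0,T)\times\crc)$-convergence of $\welo^s$ together with space-time convergence in measure of $\rho^s$; your majorant $\mu^{-1}\big(\|\mwelo_t\|_{L^2}^2+2|\psi_t'|^2\nrg(\mrho_t)\big)$, integrable by \eqref{eq:kinetbound} and \eqref{eq:potenzbound}, makes explicit the domination that step tacitly relies on.
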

\begin{proof}
  Since $\mrho_t \in H^1(\crc)$ and $\mwelo_t\in L^2(\crc)$,
  we have $\rho_t^s \to \mrho_t$ in $H^1(\crc)$ and $\welo_t^s \to \mwelot$ in $L^2(\crc)$.
  In particular, $\rho_t^s \to \mrho_t$ in measure on $\crc$ and $(\welo_t^s)^2 \to (\mwelot)^2$ strongly in $L^1(\crc)$,
  so that
  \begin{align*}
    \intom \frac{(\welo_t^s)^2}{\mob(\rho_t^s)+\mu}\dd x \to  \intom \frac{(\mwelo_t)^2}{\mob(\mrho_t)+\mu}\dd x.
  \end{align*}
  We have used that $(\rho,\welo)\mapsto\frac{\welo^2}{\mob(\rho)+\mu}$ is a bounded continuous map for $\mu>0$.

  Since $\mwelo$ is square integrable on $(0,T)\times\crc$, one even has $\welo^s\to\mwelo$ in $L^2((0,T)\times\crc)$.
  And since naturally $\rho^s\to\mrho$ in measure on $(0,T)\times\crc$,
  the quotient $(\welo^s)^2/(\mob(\rho^s)+\mu)$ converges in $L^1((0,T)\times\crc)$,
  which implies $L^1$-convergence of $\hat\kinet^s$.
\end{proof}
\begin{lemma}
  \label{lem:diff}
  $\hat\kinet_t^s$ and $e_t^s$ are continuously differentiable for $s>0$ with
  \begin{equation}
    \label{eq:dds}
    \begin{split}
      -\frac{e^{-t/\eps}}{\eps}
      &\,\frac{\dd}{\dn s}\big(\eps\hat\kinet^s_t+e^s_t)
      \ge - 9 \hat\kinet_t^s - 18  \\
      &+\Psi(t)^2
      \intom\left\{
        \frac12\big(\partial_{xx}\rhost\big)^2
        +\frac\eps4\intom\big(\welost\big)^2(\partial_x\rhost)^2\dd x
        + \frac{\eps\mu}8\intom\big(\partial_x\welost\big)^2\dd x
      \right\}\dd x
    \end{split}
  \end{equation}
  at every $t\in(0,T)$.
  % 
  % \begin{align}
  %   \label{eq:dds1} % \label{eq:estimate_partial_s}
  %   -\frac{\dd}{\dd s} e_t^s
  %   =\underline e_t^s
  %   &:=\psi_t\intom \big(\partial_{xx}\rho_t^s\big)^2\dd x,\\
  %   \label{eq:dds2}
  %   -\frac{\dd}{\dd s} \hat\kinet_t^s\ge \underline{\hat \kinet}_t^s
  %   &:=\frac\mu8\psi_t \intom\big(\partial_x \welo_t^s\big)^2 \dd x
  %     -\frac{\psi_t}{2\eps}\intom \big(\partial_{xx}\rho_t^s\big)^2\dd x
  %     -3e^{t/\eps} \hat\kinet_t^0 - 216 \, e^{t/\eps}\\
  %   \label{eq:dds3}
  %   & +\psi_t \intom \frac{-\mobstpp}4\left(\frac{\welo_t^s}{\mob(\rho_t^s)+\mu}\right)^2 \big(\partial_{x}\rho_t^s\big)^2 \dd x.
  % \end{align}
\end{lemma}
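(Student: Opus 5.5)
We differentiate $\hat\kinet^s_t$ and $e^s_t$ in $s$, using the formulas for $\partial_s\rhost$ and $\partial_s\welost$ stated above. Smoothness in $s>0$ is clear: for $\psi_t>0$, $\rhost$ and $\welost$ are smooth in $(s,x)$, and $\mob(\rhost)+\mu\ge\mu>0$. The energy part is exactly as in Lemma~\ref{lem:et}:
\[
-\frac{\dd}{\dn s}e^s_t=\psi_t\intom(\partial_{xx}\rhost)^2\dd x .
\]
Multiplied by $e^{-t/\eps}/\eps$, this gives $\Psi_t^2\intom(\partial_{xx}\rhost)^2\dd x$. Half of it enters the right-hand side of \eqref{eq:dds}; the other half stays in reserve to absorb bad terms from the kinetic part.

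For the kinetic part, write $\mathbf u:=\welost/(\mobst+\mu)$. Then
\[
\frac{\dd}{\dn s}\hat\kinet^s_t=\intom\Big(\mathbf u\,\partial_s\welost-\tfrac12\mobstp\,\mathbf u^2\,\partial_s\rhost\Big)\dd x .
\]
We substitute $\partial_s\welost=\psi_t\partial_{xx}\welost-\psi'_t\partial_x\rhost$ and $\partial_s\rhost=\psi_t\partial_{xx}\rhost$, and split into the $\psi_t$-terms (heat-flow dissipation) and the $\psi'_t$-term (error).

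For the $\psi_t$-terms, integrating by parts turns $\intom\mathbf u\,\partial_{xx}\welost$ into
\[
-\intom\frac{(\partial_x\welost)^2}{\mobst+\mu}\dd x+\intom\frac{\mobstp\,\welost\,\partial_x\welost\,\partial_x\rhost}{(\mobst+\mu)^2}\dd x .
\]
The term $-\tfrac12\intom\mobstp\mathbf u^2\partial_{xx}\rhost$ integrates by parts into terms containing $\mobstpp\mathbf u^2(\partial_x\rhost)^2$ and cross terms $\mobstp\,\mathbf u\,\partial_x\mathbf u\,\partial_x\rhost$. This is the displacement-convexity computation of the kinetic energy along the heat flow (Jensen in the linear case). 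By joint convexity of $(r,w)\mapsto w^2/(\mob(r)+\mu)$, which uses $\mob''\le-1$, these terms combine into a nonpositive quadratic form. Quantitatively we aim for
\[
-\frac{\dd}{\dn s}\hat\kinet\big|_{\psi}\;\ge\;\psi_t\intom\Big(\tfrac{\mu}{8}\,(\partial_x\welost)^2+\tfrac14\,(-\mobstpp)\,\mathbf u^2(\partial_x\rhost)^2\Big)\dd x .
\]
We extract it by completing the square in $(\partial_x\welost,\ \mathbf u\,\partial_x\rhost)$ and keeping a fraction of each diagonal term. The first piece uses $1/(\mobst+\mu)\ge1$ together with a $\mu$-dependent splitting; the second uses $-\mob''\ge1$ and $|\mob'|\le1$. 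Since $\mobst+\mu\le1$, we have $\mathbf u^2\ge(\welost)^2$, which yields the $\frac\eps4(\welost)^2(\partial_x\rhost)^2$ term after multiplication by $\eps$. This completing-the-square step is the main obstacle: it is exactly where the constants $1/8$ and $1/4$ come from. If the cross terms do not close directly, the fallback is to borrow the reserved $\frac12\psi_t\intom(\partial_{xx}\rhost)^2$ via Young.

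For the $\psi'_t$-term, use $|\psi'_t|\le e^{t/\eps}\Psi_t(1+2)=3e^{t/\eps}\Psi_t$ (since $\tau=\eps$), and bound
\[
\Big|\psi'_t\intom\mathbf u\,\partial_x\rhost\dd x\Big|\le|\psi'_t|\,(2\hat\kinet^s_t)^{1/2}\,\|\partial_x\rhost\|_{L^2}.
\]
Interpolate $\|\partial_x\rhost\|_{L^2}^2\le\|\rhost\|_{L^2}\|\partial_{xx}\rhost\|_{L^2}$ with $0\le\rhost\le1$. Multiplying by $e^{-t/\eps}$ (the $\eps$ cancels), Young's inequality gives
\[
3\Psi_t\sqrt{2\hat\kinet}\,\|\partial_{xx}\rhost\|_{L^2}^{1/2}\le 9\hat\kinet+\tfrac12\Psi_t^2\|\partial_{xx}\rhost\|^2_{L^2}+18 ,
\]
up to adjusting the constants. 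Collecting all terms and using $\Psi\le1$ then yields \eqref{eq:dds}.
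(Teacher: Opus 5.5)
Your treatment of the $\psi_t$-terms is sound and is what the paper does. Since $\partial_x\welost-\mobstp\,\mathbf u\,\partial_x\rhost=(\mobst+\mu)\,\partial_x\mathbf u$, that part of $-\frac{\dn}{\dn s}\hat\kinet^s_t$ equals $\psi_t\intom\{(\mobst+\mu)(\partial_x\mathbf u)^2+\frac{-\mobstpp}{2}\mathbf u^2(\partial_x\rhost)^2\}\dd x$. Keeping only the fraction $\mu/4$ of the first square, and using $(a-b)^2\ge\frac12a^2-b^2$, $|\mob'|\le1$, $-\mob''\ge1$ and $\mobst+\mu\le1$, gives exactly your target. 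Your fallback would not have helped there anyway, since the cross term contains no $\partial_{xx}\rhost$.

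The genuine gap is in the $\psi'_t$-term. The inequality
\[
\Big|\intom\mathbf u\,\partial_x\rhost\dd x\Big|\le(2\hat\kinet^s_t)^{1/2}\,\|\partial_x\rhost\|_{L^2}
\]
is false in general. It would need $\intom(\welost)^2/(\mobst+\mu)^2\dd x\le\intom(\welost)^2/(\mobst+\mu)\dd x$, i.e.\ $\mobst+\mu\ge1$, whereas in fact $\mobst+\mu\le1$. The correct Cauchy--Schwarz split leaves the weight on $\partial_x\rhost$:
\[
\Big|\intom\mathbf u\,\partial_x\rhost\dd x\Big|\le(2\hat\kinet^s_t)^{1/2}\Big(\intom\frac{(\partial_x\rhost)^2}{\mobst+\mu}\dd x\Big)^{1/2}.
\]
Pointwise, $1/(\mobst+\mu)$ is only bounded by $1/\mu$ where $\rhost$ is close to $0$ or $1$. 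So your route yields $-\frac{C}{\mu}\hat\kinet^s_t$ instead of $-9\hat\kinet^s_t-18$. The $\mu$-independence of these constants is essential, because $\mu=\mu(\eps)\to0$ and \eqref{eq:regularity} must be uniform.

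A telltale sign is that your argument never uses the degeneracy $\mob(0)=\mob(1)=0$, and that is the missing ingredient. Together with $-\mob''\ge1$ it gives $\mob(r)\ge\frac12r(1-r)$, hence $\frac1{\mob(r)+\mu}\le\frac2r+\frac2{1-r}$. Then Lemma~\ref{lem:villani}, applied to $f=\rhost$ and $f=1-\rhost$ (smooth and strictly positive whenever $\Psi(t)>0$; otherwise the term vanishes), yields $\intom\frac{(\partial_x\rhost)^2}{\mobst+\mu}\dd x\le12\,\|\partial_{xx}\rhost\|_{L^2}$ uniformly in $\mu$. Apply Young to the contribution $\Psi(t)\big(\Psi(t)+2\eps\Psi'(t)\big)\intom\mathbf u\,\partial_x\rhost\dd x$, splitting the weight $1/(\mobst+\mu)$ between the two factors. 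This bounds it below by $-\frac12\Psi(t)^2\intom(\partial_{xx}\rhost)^2\dd x-18-9\hat\kinet^s_t$, which your reserved half of the energy dissipation absorbs. Your interpolation $\|\partial_x\rhost\|_{L^2}^2\le\|\rhost\|_{L^2}\|\partial_{xx}\rhost\|_{L^2}$ controls only the unweighted norm and cannot replace this step.
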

\begin{proof}
  Again, the smoothing effect of the heat flow justifies the manipulations below.
  The variation of the energy is easily computed:
  \begin{align*}
    I_0:=-\frac{e^{-t/\eps}}\eps\frac{\dd}{\dd s} e_t^s
    =-\frac{e^{-t/\eps}}\eps\intom \partialx\rhost\partialx(\psi(t)\partialxx\rhost)\dd x
    =\Psi(t)^2\intom \big(\partial_{xx}\rho_t^s\big)^2\dd x.
  \end{align*}
  The variation of the kinetic term produces:
  \begin{align*}
    -\frac{\dd}{\dd s} \hat\kinet_t^s
    &=-\intom\left[\kinint\partial_s \welost-\frac{\mob'(\rhost)}{2}\left( \kinint\right)^2\partial_s\rhost \right]\dd x\\
    &=-\intom\left[\kinint (\psit\partialxx\welost-\psit'\partialx\rhost)-\frac{\mob'(\rhost)}{2}\left( \kinint\right)^2\psit\partialxx\rho\right]\dd x.
  \end{align*}
  Integrate by parts in $x$ to reduce the double derivatives on $\rho$ and on $\welo$, respectively, to single derivatives,
  and multiply by $e^{-t/\eps}/\eps$.
  The resulting integral is the sum of the following three terms:
  \begin{align*}
    I_1&:=\eps\Psi(t)^2\intom\partialx\left(\kinint\right)\left\{ \partialx\welost-\kinint \mob'(\rhost)\partialx\rhost\right\}\dd x, \\
    I_2&:=\eps\Psi(t)^2\intom \frac{-\mobstpp}2 \left(\kinint\right)^2 \big(\partial_{x}\rho_t^s\big)^2\dd x, \\
    I_3&:=\Psi(t)\big(\Psi(t)+2\eps\Psi'(t)\big)\intom\kinint\partialx\rhost\dd x.
  \end{align*}
  Concerning $I_1$, observe that the expression in the curly bracket is the same, up to a factor $\mobst+\mu$,
  as the $x$-derivative of $\welost/(\mob(\rhost)+\mu)$.
  We thus obtain a square under the integral, which we can bound from below by the binomial formula:
  \begin{align*}
    I_1
    &= \eps\Psi(t)^2\intom(\mobst+\mu)\,\left(\frac{\partialx\welost}{\mobst+\mu}
      -\kinint\frac{\mobstp}{\mobst+\mu}\partialx\rhost\right)^2\dd x  \\
    &\ge\Psi(t)^2\left\{\frac{\eps}2\intom\frac{\big(\partialx\welost\big)^2}{\mobst+\mu}\dd x
      -\eps\intom\frac{\big(\mobstp\big)^2}{\mobst+\mu}\left(\frac{\welost}{\mob(\rhost)+\mu}\right)^2\big(\partialx\rhost\big)^2 \dd x\right\}.
  \end{align*}
  As an integral over a square, $I_1$ is non-negative, and so $I_1\ge\frac\mu4 I_1$ since $\mu<1$.
  Recalling $|\mob'|\le 1$, and using $-\mob''\ge1$ to estimate the expression for $I_2$ from below,
  it follows that
  \begin{align}
    \label{eq:I1I2}
    \begin{split}
      I_1+I_2 \ge \frac\mu4 I_1 + I_2
      &\ge \Psi(t)^2\left\{\frac{\mu\eps}8\intom\frac{\big(\partialx\welost\big)^2}{\mobst+\mu}\dd x
        + \frac{\eps}{4}\intom \left(\kinint\right)^2 \big(\partial_{x}\rho_t^s\big)^2\dd x\right\} \\
      &\ge \Psi(t)^2\left\{\frac{\mu\eps}8\intom \big(\partialx\welost\big)^2\dd x
        + \frac{\eps}{4}\intom \big(\welost\big)^2 \big(\partial_{x}\rhost\big)^2\dd x\right\},
    \end{split}
  \end{align}
  where the last inequality is a consequence of $\mob+\mu\le 1$.

  Concerning $I_3$, we start by noting that since $\mob(0)=\mob(1)=0$ and $-\mob''\ge1$,
  \begin{align*}
    \mob(r) \ge \frac{r(1-r)}2,
    \quad\text{and so}\quad
    -\frac1{\mob(r)+\mu} \ge -\frac2r - \frac2{1-r}.
  \end{align*}
  And since $\Psi\le1$ and $\eps|\Psi'|\le1$, we obtain for every $t\in(0,T)$:
  \begin{align}
    \begin{split}
  \label{eq:I3}
    I_3
    &\ge -\frac{\Psi(t)^2}2\intom \frac{\big(\partial_x\rhost\big)^2}{\mob(\rhost)+\mu}\dd x
      - \frac{\big(\Psi(t)+2\eps\Psi'(t)\big)^2}2\intom \frac{\big(\welost\big)^2}{\mob(\rhost)+\mu}\dd x \\
    &\ge -\left(\intom \frac{\big(\partial_x\rhost\big)^2}{\rhost}\dd x + \intom \frac{\big(\partial_x\rhost\big)^2}{1-\rhost}\dd x\right)
      - 9 \hat\kinet_t^s.
      \end{split}
  \end{align}
  We estimate further, using \eqref{eq:villani} with $f:=\rhost$ and $f:=1-\rhost:$
    \begin{align*}
    \intom \frac{\big(\partial_x\rhost\big)^2}{\rhost}\dd x + \intom \frac{\big(\partial_x\rhost\big)^2}{1-\rhost}\dd x 
    &\le 6\left( \intom \big(\partial_{xx}\rhost\big)^2\dd x\right)^{1/2}\le \frac12\intom \big(\partial_{xx}\rhost\big)^2\dd x + 18.
  \end{align*}
  And so,
  \begin{align}
    \label{eq:I0I3}
    I_0+I_3 \ge \frac12\Psi(t)^2\intom \big(\partial_{xx}\rhost\big)^2\dd x - 9\hat\kinet_t^s - 18.
  \end{align}
  Summation of \eqref{eq:I0I3} with \eqref{eq:I1I2} further above yields \eqref{eq:dds}.
\end{proof}
\begin{proof}[Proof of Proposition \ref{prp:regularity}]
  Recall that $\hat\kinet^s_t$ and $e^s_t$ are continuously differentiable for $s>0$ by Lemma \ref{lem:diff}
  and are continuous at $s=0^+$ by Lemma \ref{lem:continuity}.
  By minimality of $(\mrho,\mwelot)$,
  we thus obtain for each $\sigma>0$:
  \begin{align}
    \label{eq:usemin}
    0 \ge \frac{\hat\wed^\eps(\mrho,\mwelo)- \hat\wed^\eps(\rho^\sigma,\welo^\sigma)}\sigma
    \ge \fint_0^\sigma\int_0^T\left(-\frac{e^{-t/\eps}}{\eps}\frac{\dd}{\dn s}\big(\eps\hat\kinet^s_t+e_t^s\big)\right)\dd t\dd s.
  \end{align}
  Now substitute the lower bound \eqref{eq:dds} for the integrand.
  Rearranging terms and neglecting the non-negative contributions outside of the time interval $[2\eps,T-2\eps]$,
  we obtain
  \begin{equation}
    \label{eq:godhelpme}
    \begin{split}
      \fint_0^\sigma\int_{2\eps}^{T-2\eps}\intom&\left\{
        \frac12\big(\partial_{xx}\rhost\big)^2
        +\frac\eps4\intom\big(\welost\big)^2\big(\partial_x\rhost\big)^2
        + \frac{\eps\mu}8\intom\big(\partial_x\welost\big)^2
      \right\}\dd x\dd t\dd s \\
      &\le 
      \fint_0^\sigma\int_0^T\big(9 \hat\kinet_t^s + 18\big)\dd t\dd s.
    \end{split}
  \end{equation}
  By continuity of $s\mapsto \kinet_t^s$ in $L^1((0,T))$, see Lemma \ref{lem:continuity},
  and by the fundamental estimate \eqref{eq:kinetbound},
  the right-hand side above is estimated for each sufficiently small $\sigma>0$ as follows:
  \begin{align}
    \label{eq:CT}
    \fint_0^\sigma\int_0^T\big(9 \hat\kinet_t^s + 18\big)\dd t\dd s
    \le 18T+10\int_0^T\hat\kinet_t^0\dd t \le C_T:=18T + 5\nrg(\bar\rho_0).
  \end{align}
  It follows from \eqref{eq:godhelpme} that there is a sequence $s_n\searrow0$ such that
  \begin{align}
    \label{eq:itallcomesfromhere}
    \int_{2\eps}^{T-2\eps}\intom\left\{
    \frac12\big(\partial_{xx}\rho_t^{s_n}\big)^2
    +\frac\eps4\intom\big(\welo_t^{s_n}\big)^2(\partial_x\rho_t^{s_n})^2
    + \frac{\eps\mu}8\intom(\partial_x\welo_t^{s_n})^2
    \right\}\dd x\dd t
    \le C_T.
  \end{align}
  By Alaoglu's theorem, we conclude --- passing to a subsequence if necessary --- weak convergence
  of $\partial_{xx}\rho^{s_n}$, of $\partial_x\welo^{s_n}$, and of the product $\welo^{s_n}\partial_x\rho^{s_n}$
  in $L^2([2\eps,T-2\eps]\times\crc)$ as $s_n\searrow0$:
  the functions $\partial_x\rho^{s_n}$ and $\welo^{s_n}$ converge strongly
  to their respective limits $\partial_x\mrho$ and $\mwelo$ in $L^2([2\eps,T-2\eps]\times\crc)$ by construction,
  and thus converge in particular pointwise a.e.\ along a suitable subsequence of $(s_n)$.
  This allows to identify the weak limits with, respectively,
  $\partial_{xx}\mrho$, $\partial_x\mwelo$, and $\mwelo\,\partial_x\mrho$.
  By means of weak lower semi-continuity of the $L^2$-norm,
  we can pass to the limes inferior $s_n\searrow0$ in the inequality \eqref{eq:itallcomesfromhere}
  and obtain the desired bound \eqref{eq:regularity}.
\end{proof}

%%%%%%%%%%%%%%%%%%%%%%%%%%%%%%%%%%%%%%%%%%%%%%%%%%%%%%%
\subsection{Weak formulation for the limit}
%%%%%%%%%%%%%%%%%%%%%%%%%%%%%%%%%%%%%%%%%%%%%%%%%%%%%%%
%
In this section, we prove Theorem \ref{thm:chpde}.
To obtain Euler-Lagrange equations for the minimizer $(\mrho,\mwelo)$,
we proceed ``one test function at a time'':
throughout this section, let a $\xi\in C^\infty_c((0,\infty)\times\crc)$ be fixed.
For quantitative estimation on variations with that specific $\xi$,
choose an adapted time horizon $T>4$ and a temporal distance $\tau\in[\eps,1)$,
so that
\begin{align*}
  \xi_t(x) = 0 \quad \text{unless $t\in(2\tau,T-2\tau)$}.
\end{align*}
%
% \begin{proposition}
%   \label{prp:eulerlagrange_mob}
%   The equation \eqref{eq:chfinal} holds.
% \end{proposition}
% 
To derive the formulation \eqref{eq:chfinal},
we apply again the direct methods from the calculus of variations.
Now, the variation $(\rho^s,\welo^s)$ of the minimizer $(\mrho,\mwelo)$ is chosen
as solution to an auxiliary coupled system of parabolic PDEs
that explicitly depends on the test function $\xi$ and has the minimizer $(\mrho,\mwelo)$ as initial datum;
a similar perturbation has been used, e.g., in the simpler setting of the minimizing movement scheme in \cite{LMS}.
For definiteness,
let $\psi\in C^\infty_c(\Rp)$ be defined from a cut-off function $\Psi\in C^\infty_c(\Rp)$ as in \eqref{eq:psifromPsi};
notice that $\xi(t;x)=0$ unless $\Psi(t)=1$.
Define further $\Xi\in C^\infty_c(\Rnn\times\crc)$ with support in $(2\tau,T-2\tau)\times\crc$ by
\begin{align}
  \label{eq:xi2Xi}
  \Xi_t = \eps e^{t/\eps}\xi_t.
\end{align}
At each fixed $t\in(0,T)$, the PDE system is given by
\begin{align}
  \label{eq:chpert1}
  \partial_s\rho^s_t &= \partial_x\big(\mob(\rho_t^s)\,\Xi_t\big) + \delta\psi(t)\,\partial_{xx}\rho_t^s, \\
  \label{eq:chpert2}  
  \partial_s\welo^s_t &= -\partial_t\big(\mob(\rho_t^s)\,\Xi_t\big) + \delta\big(\psi(t)\,\partial_{xx}\welo_t^s-\psi'(t)\,\partial_x\rho_t^s\big),
\end{align}
and subject to the initial conditions
\begin{align}
  \label{eq:chpertic}
  \rho^0_t=\mrhot,\quad \welo^0_t=\mwelot.
\end{align}
We emphasize that in \eqref{eq:chpert1}\&\eqref{eq:chpert2},
the test function is multiplied by $\mob(\rho_t^s)$ and \emph{not} by $\mob(\rho_t^s)+\mu$.
Indeed, the degeneracy $\mob(0)=\mob(1)=0$ is essential to guarantee
the confinement $\rho_t^s(x)\in I$ for $s>0$ by the comparison principle.
\begin{lemma}
  \label{lem:continuity1}
  For almost every $t\in(0,T)$, there exist a global-in-$s$ solution $(\rho_t^s,\welo_t^s)$
  to \eqref{eq:chpert1}\&\eqref{eq:chpert2} with initial datum \eqref{eq:chpertic}.
  Specifically,
  $s\mapsto\rho_t^s$ and $s\mapsto\welo_t^s$
  are continuous from $[0,\infty)$ to $H^1(\crc)$,
  are differentiable from $(0,\infty)$ to $L^2(\crc)$,
  and attain values in $H^2(\crc)$ for $s>0$,
  so that \eqref{eq:chpert1}\&\eqref{eq:chpert2} are satisfied as equality between functions in $L^2(\crc)$.
  Consequently, $s\mapsto\hat\kinet_t^s$ and $s\mapsto e_t^s$,
  given by
  \[ \hat\kinet_t^s:= \hat\kinet\big(\rho_t^s,\welo_t^s)
    \quad\text{and}\quad
    e_t^s := \nrg(\rho_t^s),
  \]
  and continuous for $s\ge0$ and differentiable for $s>0$.
\end{lemma}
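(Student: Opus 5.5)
The proof works at a fixed time $t$ and rests on three observations. The system \eqref{eq:chpert1}\&\eqref{eq:chpert2} is \emph{triangular}: \eqref{eq:chpert1} involves only $\rho^s_t$, and \eqref{eq:chpert2} is a linear heat equation for $\welo^s_t$ whose source depends on $\rho^s_t$. Next, it is nondegenerate in $s$ wherever $\Xi_t\neq0$. Finally, away from the support of $\Xi$ it reduces to the rescaled heat flow treated in Lemma \ref{lem:heatperturb1}. So we solve for $\rho^s_t$ first and then for $\welo^s_t$.

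\emph{Trivial times.} If $\psi(t)=0$, then $\Psi(t)=0$, hence also $\Xi_t=0$. The solution is then constant in $s$ and nothing needs to be proved. If $\psi(t)>0$ but $\Xi_t\equiv0$, the system is the heat flow with time $\delta s\psi(t)$, which was already treated before. So the real case is $t\in(2\tau,T-2\tau)\subset(2\eps,T-2\eps)$. By Proposition \ref{prp:regularity} and Corollary \ref{cor:wxL2}, for almost every such $t$ we have
\[ \mrhot\in H^2(\crc),\qquad \mwelot\in H^1(\crc),\qquad \partial_t\mrhot=-\partial_x\mwelot\in L^2(\crc). \]
We restrict to these $t$.

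\emph{Step 1: the $\rho$-equation.} Equation \eqref{eq:chpert1} is a quasilinear, uniformly parabolic equation: diffusion coefficient $\delta\psi(t)>0$, plus a first-order drift $\partial_x(\mob(\rho)\Xi_t)$ with $\mob$ Lipschitz and $\Xi_t$ smooth. I would extend $\mob$ by zero outside $[0,1]$ to obtain a globally Lipschitz drift. Existence of a unique global solution then follows from a Galerkin or Banach fixed-point argument in $C([0,S];H^1)\cap L^2(0,S;H^2)$. Confinement $\rho^s_t\in[0,1]$ follows from the comparison principle, because $\mob(0)=\mob(1)=0$ makes the constants $0$ and $1$ solutions; a posteriori the extension of $\mob$ is therefore irrelevant. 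The $H^1$ bound comes from testing with $-\partial_{xx}\rho$, using $|\mob'|\le1$ and Young's inequality to absorb $\|\partial_{xx}\rho\|^2$, and then Gronwall. Standard parabolic smoothing gives $\rho^s_t\in H^2$ for $s>0$, continuity in $H^1$ at $s=0$, and $\partial_s\rho^s_t\in L^2$.

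\emph{Step 2: the $\welo$-equation (main obstacle).} The source $-\partial_t(\mob(\rho^s_t)\Xi_t)=-\mob'(\rho^s_t)\,\partial_t\rho^s_t\,\Xi_t-\mob(\rho^s_t)\,\partial_t\Xi_t$ requires the $t$-derivative of the solution, whereas we work at fixed $t$. I would enlarge the system by an unknown $u^s_t$, intended to be $\partial_t\rho^s_t$, obtained by formally differentiating \eqref{eq:chpert1} in $t$:
\[ \partial_s u=\partial_x\big(\mob'(\rho)u\,\Xi_t+\mob(\rho)\partial_t\Xi_t\big)+\delta\psi(t)\partial_{xx}u+\delta\psi'(t)\partial_{xx}\rho,\qquad u^0=\partial_t\mrhot\in L^2. \]
This is a linear parabolic equation in $u$ with bounded coefficients. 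Its $L^2$ energy estimate works, with $\|\partial_{xx}\rho\|_{L^2}$ entering via the Step 1 bounds. So $u$ exists, lies in $C([0,S];L^2)\cap L^2(0,S;H^1)$, and is in $H^1$ for $s>0$. Afterwards, I would identify $u^s_t=\partial_t\rho^s_t$ in the weak sense, by comparing difference quotients in $t$ of $\rho^s$ with $u$ via the same linear estimate. This identification is needed later for the Euler--Lagrange computation, though not for the present lemma.

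\emph{Step 3: conclusion.} With $u$ in place of $\partial_t\rho$, \eqref{eq:chpert2} is a linear heat equation whose source lies in $L^2(0,S;L^2)$ (note $\mob'$ is bounded). Since $\mwelot\in H^1$, maximal regularity gives $\welo\in C([0,S];H^1)\cap L^2(0,S;H^2)$ with $\partial_s\welo\in L^2$. Finally, $e^s_t$ is continuous and differentiable because $\rho^s_t$ is. The same holds for $\hat\kinet^s_t$, since $\mob+\mu\ge\mu>0$ and $\mob\in C^1$, so the integrand of $\hat\kinet$ is smooth in $(\rho,\welo)$ and the chain rule applies in $L^1$.
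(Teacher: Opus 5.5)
Your route is essentially the paper's. You exploit the staggered structure, first solve the semilinear $\rho$-equation, then obtain $\partial_t\rho^s_t$ through difference quotients in $t$, and finally solve the linear inhomogeneous equation \eqref{eq:chpert2}. The differences are in the tools. You use Galerkin/energy estimates and maximal regularity, and a linearized equation for a candidate $u$ followed by an identification. The paper uses Henry's mild-solution theory with $X^{1/2}\cong H^1(\crc)$ and differentiates the representation \eqref{eq:henry1} in $t$ directly. Two points need fixing, however.

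First, the identification $u^s_t=\partial_t\rho^s_t$ is not optional for this lemma, contrary to what you write. Equation \eqref{eq:chpert2} contains $\partial_t\big(\mob(\rho^s_t)\Xi_t\big)$. Without the identification you have only solved a surrogate equation, so the claim that \eqref{eq:chpert2} holds in $L^2(\crc)$ is unproven. The same step is also what later gives the continuity equation for $(\rho^s,\welo^s)$. In the paper this is the main step of the proof. When you carry it out, note the following:
\begin{itemize}
\item $(\mrho_{t+h}-\mrho_t)/h\to\partial_t\mrho_t$ in $L^2(\crc)$ only at Lebesgue points of the Bochner function $t\mapsto\partial_t\mrho_t\in L^2(\crc)$. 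This is where ``almost every $t$'' genuinely enters, not only through $\mrho_t\in H^2$ and $\mwelo_t\in H^1$.
\item The $h$-uniform bounds on $\rho^s_{t+h}$ come from $\mrho\in C([2\eps,T-2\eps];H^1(\crc))$.
\item The remainder $\frac1h\big(\mob(\rho^s_{t+h})-\mob(\rho^s_t)\big)-\mob'(\rho^s_t)u^s_t$ is controlled by the uniform continuity of $\mob'$ on $[0,1]$.
\end{itemize}

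Second, maximal $L^2$-regularity in your Step 3 gives only $(s\mapsto\welo^s_t)\in H^1(0,S;L^2(\crc))\cap L^2(0,S;H^2(\crc))$. That is, you get differentiability and $H^2$-values for \emph{almost every} $s$. The lemma claims them for every $s>0$, and the derivation of \eqref{eq:usemin} uses continuous differentiability in $s$. To reach the stated regularity, the source $s\mapsto\partial_t\big(\mob(\rho^s_t)\Xi_t\big)$ must be locally H\"older continuous into $L^2(\crc)$ on $(0,\infty)$. The paper establishes exactly this H\"older continuity of $s\mapsto\partial_t\rho^s_t$ before applying \cite[Theorem 3.2.2]{Henry1981geometric}. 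In your framework it would follow from parabolic smoothing for the $u$-equation, but you need to state and prove it. Alternatively, you could settle for absolute continuity of $s\mapsto\hat\kinet^s_t$ and $s\mapsto e^s_t$. That suffices for the later fundamental-theorem-of-calculus step, but it is weaker than what the lemma asserts.
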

\begin{proof}
  For the proof, we rely on the theory of mild solutions for parabolic equations,
  as detailed e.g. in \cite {Henry1981geometric}.
  As sectorial operator, we use the scaled periodic Laplacian $A:=-\delta\,\partial_{xx}$ on the Hilbert space $X:=L^2(\crc)$,
  and the corresponding exponential map is given by convolution with the heat kernel $\mk$ from \eqref{eq:Gtilde},
  \[ e^{sA}u = \mk^{\delta s}\ast u \quad\text{for all $s\ge0$ and $u\in X$}. \]
  In this situation, the interpolation spaces $X^1$ and $X^{1/2}$ are isomorphic to $H^2(\crc)$ and to $H^1(\crc)$, respectively.

  % Choose $t\in(0,T)$ as a Lebesgue point of $(0,T)\ni t\mapsto\partial_t\rho_t\in L^2(\crc)$.
  To begin with, let $t\in(0,T)$ be arbitrary.
  Observe that the system  % \eqref{eq:perturbation_rho}\&\eqref{eq:perturbation_welo}
  \eqref{eq:chpert1}\&\eqref{eq:chpert2} is staggered.
  The equation \eqref{eq:chpert1} for $\rho^{(\cdot)}_t$ is autonomous (recall that $t$ is fixed).
  The nonlinearity $\rho\mapsto\partialx(\mob(\rho)\xi_t)$ is locally Lipschitz continuous
  from $X^{1/2}\cong H^1(\crc)$ to $X=L^2(\crc)$,
  and grows at most linearly as $\|\rho \|_{H^1}\to\infty$.
  By the fundamental result \cite[Theorem 3.3.3]{Henry1981geometric} on local existence and uniquess,
  there is a unique local mild solution $s\mapsto \rho^s_t\in H^1(\crc)$ for the initial datum $\rho^0_t=\mrhot$,
  i.e., $\rho_t^s$ is the (only) fixed point of
  \begin{align}
    \label{eq:henry1}
    \rho_t^s= \mk_t^{\delta s}\ast \mrho_t + \int_0^s\partial_x\big[\mk_t^{\delta(s-r)}\ast\big(\mob(\rho_t^s)\,\Xi_t\big)\big]\dd r,
  \end{align}
  and the result \cite[Corollary 3.3.5]{Henry1981geometric} on global extendability,
  that solution is even global in $s\ge0$.
  The claimed continuity and differentiability properties are all part of the mild solution concept, see \cite[Definition 3.3.1]{Henry1981geometric}.
  
  Recall from Corollary \ref{cor:wxL2} that $\mrho_t$ has a weak time derivative in $L^2$.
  Taking difference quotients in \eqref{eq:henry1} with respect to $t$,
  using the regularity of $\mob$ and of $\Xi$,
  and applying standard heat kernel estimates,
  it can be shown that for almost every $t$,
  the partial derivative $s\mapsto \partial_t\rho_t^s\in L^2(\crc)$ is H\"older continuous.
  The H\"older continuity is obviously inherited by $s\mapsto \partial_t\big(\mob(\rho_t^s)\Xi_t)$.
  This is sufficient to solve the --- linear but inhomogeneous --- auxiliary equation \eqref{eq:chpert2} for $\welo$ as well,
  see \cite[Theorem 3.2.2]{Henry1981geometric}.
  The solution has an integral representation that is analogous to \eqref{eq:henry1}:
  \begin{align}
    \label{eq:henry2}
    \welo_t^s= \mk_t^{\delta s}\ast \mwelo_t - \delta s\,\psi'(t)\,\partial_x\mk_t^{\delta s}\ast\mrho_t
    + \int_0^s\partial_t\big[\mk_t^{\delta(s-r)}\ast\big(\mob(\rho_t^s)\,\Xi_t\big)\big]\dd r.
  \end{align}
  Continuity of $s\mapsto e_t^s$ and of $s\mapsto \hat K_t^s$ follow
  from continuity of $s\mapsto\rho_t^s\in H^1(\crc)$ and of $s\mapsto\welo_t^s\in H^1(\crc)$.
\end{proof}
\begin{lemma}
  $(\rho^s,\welo^s)\in\curves(\bar\rho_0)$ for each $s\ge0$.
  Moreover, for each fixed $s>0$ and almost every $t\in(2\tau,T-2\tau)$,
  the density $\rho^s$ attains only values in the interior of $I=[0,1]$.
\end{lemma}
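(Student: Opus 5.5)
We verify (C1)–(C4) for the perturbed pair and then prove the interior-value property separately. (C3) and the measurability in $t$ are inherited from the mild-solution representations \eqref{eq:henry1} and \eqref{eq:henry2}. For $t$ outside $(0,T)$ we have $\psi=0$ and $\Xi=0$, so there $(\rho^s_t,\welo^s_t)=(\mrhot,\mwelot)$; in particular $\rho^s_0=\bar\rho_0$.

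\emph{Mass, (C4) and (C2).} Mass conservation follows by integrating \eqref{eq:chpert1} over $\crc$: the right-hand side is a pure $x$-derivative. For (C4), fix $\theta\in C^\infty_c(\Rp\times\crc)$ and set
\[ D(s):=\int_0^\infty\intom\big(\partial_t\theta\,\rho^s+\partial_x\theta\,\welo^s\big)\dd x\dd t, \]
so that $D(0)=0$ by (C4) for the minimizer. Differentiating under the integral with \eqref{eq:chpert1}\&\eqref{eq:chpert2}, the transport contributions are
\[ \int\!\!\intom\big(\partial_t\theta\,\partial_x(\mob(\rho^s)\Xi)-\partial_x\theta\,\partial_t(\mob(\rho^s)\Xi)\big)\dd x\dd t, \]
which vanishes after integrating by parts in $x$ and $t$, since $\partial_t\partial_x\theta=\partial_x\partial_t\theta$. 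The heat-flow contributions cancel exactly as in Lemma \ref{lem:heatperturb1}, using $\partial_t(\psi\,\partial_{xx}\rho^s)=\psi'\partial_{xx}\rho^s+\psi\,\partial_{xx}\partial_t\rho^s$. Hence $D'(s)=0$ and $D\equiv0$. To justify these manipulations, I would use that $\partial_t\rho^s_t\in L^2$ with H\"older dependence on $s$ (from the proof of Lemma \ref{lem:continuity1}), or alternatively work directly with the integral representations and mollify in $t$. (C2) then follows from (C4) via the $L^1$ bound on $\welo^s$ exactly as in \eqref{eq:BLbyK}, once we know $\hat\kinet(\rho^s,\welo^s)$ is locally integrable in $t$.

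\emph{Constraint and interior values.} This is the main step: showing $\rho^s_t\in[0,1]$, which gives (C1), and indeed $\rho^s_t\in(0,1)$ for $s>0$. Write \eqref{eq:chpert1} as
\[ \partial_s\rho=\delta\psi\,\partial_{xx}\rho+\mob'(\rho)\,\partial_x\rho\,\Xi+\mob(\rho)\,\partial_x\Xi. \]
Since $\mob(0)=\mob(1)=0$, the constants $0$ and $1$ are both solutions; where $\psi(t)=1$ this is a uniformly parabolic equation in $s$. First I would extend $\mob$ by $0$ outside $[0,1]$, to a Lipschitz function, so that the equation makes sense for arbitrary values of $\rho$. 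Then I would test the equation with $(\rho^s)_-$ and with $(\rho^s-1)_+$. Using that $\mob$ is Lipschitz and $\mob(0)=0$, Gronwall's lemma gives $\|(\rho^s)_-\|_{L^2}\le e^{Cs}\|(\mrhot)_-\|_{L^2}=0$, and likewise $(\rho^s-1)_+=0$. The interior values at almost every $t\in(2\tau,T-2\tau)$, where $\psi(t)>0$, follow from the strong maximum principle. Since $\rho^s_t\in H^2$ and $\rho^s_t$ has mass $\mathrm m\in(0,1)$, it is not identically $0$ or $1$; the difference $\rho^s-0$ satisfies a linear parabolic equation with bounded coefficients, namely $\mob(\rho)=c(x,s)\rho$ with $c$ bounded by Lipschitz continuity of $\mob$. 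The Harnack or strong maximum principle therefore gives $\rho^s_t>0$ for $s>0$; the same argument applied to $1-\rho^s$ gives $\rho^s_t<1$.

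The expected obstacle is the justification of the regularity needed for the strong maximum principle when working only with mild solutions. I would first try to bootstrap, using \eqref{eq:henry1}, to classical smoothness in $(s,x)$ for $s>0$, which is possible since $\mob$ is Lipschitz and $\Xi$ is smooth. I would then apply the classical strong maximum principle on $(s_0,s]\times\crc$.
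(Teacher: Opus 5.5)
Your (C4) step is circular. The rest of the proposal is sound. Your constraint argument is in fact more complete than the paper's, which only cites the comparison principle with the constants $0$ and $1$. The truncation tests with $(\rho^s)_-$ and $(\rho^s-1)_+$ give $\rho^s_t\in[0,1]$. Strict interior values do need, as you say, the strong maximum principle together with $0<\mathrm m<1$. (Minor slip: the equation is uniformly parabolic where $\Psi(t)=1$, i.e.\ where $\psi(t)>0$, not where $\psi(t)=1$.)

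The heat-flow contributions to $D'(s)$ do not cancel. Write $D_\theta$ for your $D$ with test function $\theta$ and move all derivatives onto $\theta$ and $\psi$. This gives
\[
\delta\int_0^\infty\intom\Big(\partial_t\theta\,\psi\,\partial_{xx}\rho^s+\partial_x\theta\,\big(\psi\,\partial_{xx}\welo^s-\psi'\,\partial_x\rho^s\big)\Big)\dd x\dd t
=\delta\,D_{\psi\partial_{xx}\theta}(s).
\]
If you instead integrate by parts in $t$ with your product rule, the $\psi'$-terms cancel. What remains is $-\delta\int\!\!\int\theta\,\psi\,\partial_{xx}\big(\partial_t\rho^s+\partial_x\welo^s\big)\dd x\dd t$, which vanishes only if the continuity equation already holds at parameter $s$. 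So ``hence $D'(s)=0$'' assumes what is to be proved.

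What you actually have is the linear system $D_\theta'(s)=\delta D_{\psi\partial_{xx}\theta}(s)$ with $D_\theta(0)=0$. Equivalently, the defect $g^s:=\partial_t\rho^s+\partial_x\welo^s$ solves $\partial_s g^s=\delta\psi\,\partial_{xx}g^s$ with $g^0=0$. In Lemma \ref{lem:heatperturb1} this issue never arises, because there $\rho^s$ is an explicit convolution of $\mrho$ and the kernel can be moved onto $\theta$. Here $\rho^s$ is not of that form.

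You therefore need a uniqueness step. The simplest is duality:
\begin{itemize}
\item Set $\theta^{(u)}_t:=\htk^{\delta u\psi(t)}\ast\theta_t$. This is again smooth with compact support in $t$, and it satisfies $\partial_u\theta^{(u)}=\delta\psi\,\partial_{xx}\theta^{(u)}$.
\item The map $r\mapsto D_{\theta^{(s-r)}}(r)$ then has zero derivative on $[0,s]$. The transport part cancels as you showed, and the heat parts cancel against $\partial_r\theta^{(s-r)}$.
\item Hence $D_\theta(s)=D_{\theta^{(s)}}(0)=0$ by (C4) for the minimizer.
\end{itemize}
Transferring the heat kernel onto the test function is exactly what the paper's computation with the Duhamel representations \eqref{eq:henry1}\&\eqref{eq:henry2} does. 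So your ``alternative'' of working with the integral representations is not merely a way to justify the differentiation. It is the ingredient that actually closes the argument.
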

\begin{proof}
  On basis of the integral representations \eqref{eq:henry1}\&\eqref{eq:henry2},
  it is easily deduced that at any $s>0$, the functions $\rho^s$ and $\welo^s$
  --- which are defined almost everywhere on $(2\tau,T-2\tau)\times\crc$ ---
  satisfy the continuity equation in distributional sense:
  for each $\theta\in C^\infty_c((2\eps,T-2\eps)\times\crc)$,
  \begin{align*}
    &\int_0^T\intom\big(\rho^s_t\,\partial_t\theta_t + \welo_t^s\,\partial_x\theta_t\big)\dd x\dd t \\
    &= \int_0^T\intom\big\{\mk^{\delta s}_t\ast\mrho_t \,\partial_t\theta_t
      - \delta s\,\psi'(t)\,\partial_x\mk_t^{\delta s}\ast\mrho_t\,\partial_x\theta_t
      + \mk^{\delta s}_t\ast\mwelo_t\,\partial_x\theta_t\big\}\dd x\dd t \\
    &\quad +\int_0^T\intom\int_0^s
      \big\{\partial_x\big[\mk_t^{\delta(s-r)}\ast\big(\mob(\rho_t^s)\,\Xi_t\big)\big] \,\partial_t\theta_t
      -\partial_t\big[\mk_t^{\delta(s-r)}\ast\big(\mob(\rho_t^s)\,\Xi_t\big) \big]\partial_x\theta_t\big\}\dd r\dd x\dd t \\
    &= \int_0^T\intom\big\{\mrho_t \big(\mk^{\delta s}_t\ast(\partial_t\theta_t)+(\partial_t\mk^{\delta s}_t)\ast\theta_t) + \mwelo_t\,\partial_x(\mk^{\delta s}_t\ast\theta_t)\big\}\dd x\dd t \\
    &\quad -\int_0^s\int_0^T\intom
      \mk_t^{\delta(s-r)}\ast\big(\mob(\rho_t^s)\,\Xi_t\big) \,
      \big[\partial_x\partial_t\theta_t- \partial_t\partial_x\theta_t\big]
      \dd x\dd t\dd r \\
    &=\int_0^T\intom\big\{\mrho_t\,\partial_t\big(\mk_t^s\ast\theta_t\big) + \mwelo_t\,\partial_x\big(\mk_t^s\ast\theta_t\big)\big\}\dd x\dd t + 0 = 0.
  \end{align*}
  The regularity of $s\mapsto\rho^s_t$ for fixed $t$ --- values in $H^2(\crc)$ and $s$-derivative in $L^2(\crc)$ ---
  is sufficient to apply the comparison principle with the constant solutions $\hat\rho\equiv1$ and $\check\rho\equiv0$,
  and conclude that $\rho_t^s$ only attains values in the interior of the interval $I$.
\end{proof}
\begin{lemma}
  \label{lem:cont9}
  As $s\searrow 0$, we have that 
  \begin{align*}
    \partial_x\rho^s \to \partial_x\bar\rho^\eps,\ \welo^s\to\bar\welo^\eps \quad \text{in $L^2\big((0,T)\times\crc\big)$}.
  \end{align*}
  Consequently, $\hat\kinet^s\to\hat\kinet^0$ and $e^s\to e^0$ in $L^1((0,T))$.
\end{lemma}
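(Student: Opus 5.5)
We will argue directly from the mild-solution representations \eqref{eq:henry1} and \eqref{eq:henry2} and estimate each term as $s\searrow0$, uniformly enough in $t$ to integrate over $(0,T)$. The target is a bound of the form
\[
\|\rho^s_t-\mrho_t\|_{H^1}\le\|\mk^{\delta s}_t\ast\mrho_t-\mrho_t\|_{H^1}+C\,s^{1/2}\big(1+\|\mrho_t\|_{H^1}\big).
\]
We also want an analogous bound for $\welo^s_t-\mwelo_t$ in $L^2$. The constant $C$ may depend only on $\xi,\eps,\delta,T$, not on $t$.

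\emph{Density.} The first term $\mk^{\delta s}_t\ast\mrho_t$ converges to $\mrho_t$ in $H^1(\crc)$ for every $t$ with $\mrho_t\in H^1$. The $H^1$ norm of the difference is bounded by $2\|\mrho_t\|_{H^1}$. Since $\mrho\in L^2((0,T);H^1)$ by \eqref{eq:trivialbound}, dominated convergence gives convergence in $L^2((0,T);H^1)$. For the Duhamel term in \eqref{eq:henry1}, we use the heat kernel estimate $\|\partial_x\mk^{\delta r}\ast f\|_{H^1}\le C(\delta r)^{-1/2}\|f\|_{H^1}$. This holds since $\|\partial_x\mk^{\delta r}\ast g\|_{L^2}\le C r^{-1/2}\|g\|_{L^2}$, applied to $g=f$ and $g=\partial_x f$. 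We also have
\[
\|\mob(\rho^r_t)\Xi_t\|_{H^1}\le C\big(1+\|\rho^r_t\|_{H^1}\big),
\]
because $\mob$ is Lipschitz and bounded and $\Xi$ is smooth. A Gronwall argument in $s$ for $\sup_{r\le s}\|\rho^r_t\|_{H^1}$ then yields $\|\rho^r_t\|_{H^1}\le C(1+\|\mrho_t\|_{H^1})$ for $r\le1$. Hence the Duhamel term is $O(s^{1/2})(1+\|\mrho_t\|_{H^1})$, which tends to zero in $L^2((0,T))$. This proves $\partial_x\rho^s\to\partial_x\mrho$ in $L^2((0,T)\times\crc)$.

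\emph{Flux.} In \eqref{eq:henry2}, the term $\mk^{\delta s}_t\ast\mwelo_t$ converges in $L^2((0,T)\times\crc)$ by dominated convergence, using $\mwelo\in L^2$ from the kinetic bound, since $\mob+\mu\le1$. The term $\delta s\,\psi'(t)\,\partial_x\mk^{\delta s}_t\ast\mrho_t$ has $L^2$ norm at most $Cs\,\|\partial_x\mrho_t\|_{L^2}$, which vanishes in $L^2$. The remaining Duhamel term is
\[
\int_0^s\mk^{\delta(s-r)}_t\ast\partial_t\big(\mob(\rho^r_t)\Xi_t\big)\dd r .
\]
We expand $\partial_t(\mob(\rho^r_t)\Xi_t)=\mob'(\rho^r_t)\,\partial_t\rho^r_t\,\Xi_t+\mob(\rho^r_t)\,\partial_t\Xi_t$. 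The term with $\partial_t\Xi_t$ is bounded, so its contribution is $O(s)$.

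\emph{Main obstacle.} The hard part is bounding $\partial_t\rho^r_t$ in $L^2((0,T)\times\crc)$ uniformly for $r\in[0,1]$. We will differentiate \eqref{eq:henry1} in $t$. The $t$-dependence enters through $\mrho_t$, through $\Xi_t$, and, since $\mk_t$ involves $\psi(t)$, through the kernel. This gives a linear Volterra inequality for $\|\partial_t\rho^r_t\|_{L^2}$, whose source is $\|\partial_t\mrho_t\|_{L^2}=\|\partial_x\mwelo_t\|_{L^2}$ plus terms controlled by $\|\mrho_t\|_{H^1}$.

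The source is square integrable on $(2\eps,T-2\eps)$ by Corollary~\ref{cor:wxL2}. Since $\Xi$ is supported in $(2\tau,T-2\tau)\subset(2\eps,T-2\eps)$, only those times matter. Gronwall then gives $\|\partial_t\rho^r_t\|_{L^2}\le C\big(\|\partial_x\mwelo_t\|_{L^2}+1+\|\mrho_t\|_{H^1}\big)$, with the right-hand side in $L^2(0,T)$. Consequently the Duhamel term in \eqref{eq:henry2} is $O(s)$ in $L^2((0,T)\times\crc)$. This is the step where we rely on the regularity from Proposition~\ref{prp:regularity}, and it is where the most care is needed.

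\emph{Conclusion.} Given the convergences of $\partial_x\rho^s$ and $\welo^s$ in $L^2$, we get $e^s\to e^0$ in $L^1((0,T))$ directly, because squares of $L^2$-convergent sequences converge in $L^1$. For $\hat\kinet^s$ we argue as in Lemma~\ref{lem:continuity}. The functions $(\welo^s)^2$ converge in $L^1((0,T)\times\crc)$, and $\rho^s\to\mrho$ in measure. The map $(r,w)\mapsto 1/(\mob(r)+\mu)$ is bounded by $\mu^{-1}$ and continuous. A generalized dominated convergence argument therefore gives $L^1$ convergence of $(\welo^s)^2/(\mob(\rho^s)+\mu)$, and integrating in $x$ gives $\hat\kinet^s\to\hat\kinet^0$ in $L^1((0,T))$.
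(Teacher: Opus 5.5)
Your proposal is correct and takes the same route as the paper. The paper's proof only says the lemma follows from direct estimates on the mild representations \eqref{eq:henry1} and \eqref{eq:henry2} and leaves the details to the reader; your argument supplies them, via dominated convergence for the free heat parts, a singular Gronwall bound in $H^1$, and a Volterra bound on $\partial_t\rho^r_t$ driven by $\partial_x\mwelo\in L^2$ from Proposition~\ref{prp:regularity}.

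One minor slip: in \eqref{eq:henry2} the $t$-derivative also acts on the kernel $\mk^{\delta(s-r)}_t$, through $\psi(t)$, and $\psi'\neq0$ on the support of $\Xi$. This produces an extra term $\delta(s-r)\psi'(t)\,\partial_{xx}\mk^{\delta(s-r)}_t\ast(\mob(\rho^r_t)\Xi_t)$ that you dropped. It is $O(s^{3/2})\,(1+\|\mrho_t\|_{H^1})$ in $L^2$ by the heat-kernel bound you already use, so your conclusion is unaffected.
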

\begin{proof}
  Again, this follows by direct estimates on the integral representations \eqref{eq:henry1} and \eqref{eq:henry2}.
  The estimates are tedious but fully explicit, and provide even a rate of convergence as $s\searrow0$
  if one uses that $\partial_{xx}\bar\rho$ and $\partial_x\bar\welo$ belong to $L^2([2\tau,T-2\tau]\times\crc)$
  to quantify the proximity of $\mk_t^s\ast\partial_x\mrho_t$ and $\mk_t^s\ast\mwelo_t$
  to the respective limits $\partial_x\mrho$ and $\mwelo$.
  We leave the details to the reader.
\end{proof}
\begin{lemma}
  \label{lem:diff1}
  The derivatives of $\hat\kinet_t^s$ and $e_t^s$ at $t>0$ with respect to $s>0$ satisfy
  \begin{equation}
    \label{eq:diff1}
    -\frac{e^{-t/\eps}}{\eps}\,\frac{\dd}{\dn s}\big(\eps\hat K^s_t+e^s_t\big)
    \ge  A_t^s + \delta\Psi(t)^2B_t^s - \delta(9 \hat\kinet_t^s + 18),
  \end{equation}
  with $A_t$ and $B_t$ given by
  \begin{align}
    \label{eq:Ast}
    A_t^s&=
         \intom \partial_{xx}\rhost\,\partial_x\big(\mob(\rhost)\xi_t\big)\dd x
         + \intom\frac{\mob(\rhost)}{\mob(\rhost)+\mu}\welost\,\xi_t\dd x \\
    \nonumber
         &\qquad +\eps\left\{\intom\frac{\welost}{\mob(\rhost)+\mu}\partial_t\big(\mob(\rhost)\xi_t\big)\dd x
         + \intom\frac{\mob'(\rhost)}2\left(\frac{\welost}{\mob(\rhost)+\mu}\right)^2\partial_x\big(\mob(\rhost)\xi_t\big) \dd x\right\}, \\
       % & + \eps \delta\Psi(t)^2 \left\{\intom\big(\mob(\rhost)+\mu\big)\left[\partial_x\left(\frac{\welost}{\mob(\rhost)+\mu}\right)\right]^2\dd x
       %   +\intom \frac{-\mob''(\rhost)}2\left(\frac{\welost}{\mob(\rhost)+\mu}\right)^2(\partial_x\rhost)^2\right\}\dd x \\
    \label{eq:Bst}
    B_t^s&=
           \frac12 \intom\big(\partial_{xx}\rhost\big)^2\dd x
           +\frac\eps4\intom\big(\welost\big)^2\big(\partial_x\rhost\big)^2\dd x
           + \frac{\eps\mu}8\intom\big(\partial_x\welost\big)^2\dd x.
         % &+ \delta\Psi(t)^2\intom(\partial_{xx}\rhost)^2\dd x
         % + \delta \Psi(t)\big(\Psi(t)+2\eps\Psi'(t)\big)\intom\frac{\welost}{\mob(\rhost)+\mu}\partial_x\rhost\dd x.
  \end{align}
\end{lemma}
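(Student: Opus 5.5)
Since \eqref{eq:chpert1}\&\eqref{eq:chpert2} are linear superpositions of a $\xi$-driven transport part and the $\delta$-scaled heat-flow part from Proposition \ref{prp:regularity}, I would split the $s$-derivative accordingly. By Lemma \ref{lem:continuity1}, the functions $\rho^s_t,\welo^s_t$ lie in $H^2(\crc)$ for $s>0$ and are differentiable into $L^2$, and by the comparison lemma $\rho_t^s$ takes values in the interior of $I$. Hence all integrations by parts below are legitimate, and $\mob(\rho^s_t)+\mu\ge\mu$ keeps the quotients bounded. Writing $\partial_s\rho^s_t=\partial_x(\mob(\rhost)\Xi_t)+\delta\,\partial_s^{\rm heat}\rhost$, and analogously for $\welo$, I would compute
\begin{align*}
  \frac{\dd}{\dn s}e^s_t &= -\intom \partial_{xx}\rhost\,\partial_s\rhost\dd x,\\
  \frac{\dd}{\dn s}\hat\kinet^s_t &= \intom\left[\kinint\,\partial_s\welost-\frac{\mobstp}{2}\left(\kinint\right)^2\partial_s\rhost\right]\dd x,
\end{align*}
and treat the two pieces separately.

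The $\delta$-part is exactly the heat perturbation of Lemma \ref{lem:diff}, with $\psi$ replaced by $\delta\psi$. Here I plan to re-run that computation verbatim. The terms $I_0,I_1,I_2,I_3$ and the estimates \eqref{eq:I1I2}, \eqref{eq:I0I3} used only $\rhost\in(0,1)$, the bounds \eqref{eq:assumption_m}, the inequality \eqref{eq:villani}, and $|\eps\Psi'|\le\eps/\tau\le1$ (valid since $\tau\ge\eps$). They never used that $\rhost$ is a heat-kernel convolution, so they produce $\delta\Psi(t)^2B^s_t-\delta(9\hat\kinet^s_t+18)$ after multiplication by $-e^{-t/\eps}/\eps$. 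The constants should be checked once more, because $\Psi$ is now built with the distance $\tau$ rather than $\eps$; the factor $(\Psi+2\eps\Psi')^2\le 9$ survives because $\tau\ge\eps$.

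For the $\xi$-part, I would multiply by $-e^{-t/\eps}/\eps$ and use $\Xi_t=\eps e^{t/\eps}\xi_t$, so that $e^{-t/\eps}\Xi_t/\eps=\xi_t$. The energy contributes $\intom\partial_{xx}\rhost\,\partial_x(\mob(\rhost)\xi_t)\dd x$. For the kinetic term, note that $\partial_t(\mob\Xi_t)=\eps e^{t/\eps}\big(\partial_t(\mob\xi_t)+\eps^{-1}\mob\xi_t\big)$. The $\eps^{-1}$ piece yields $\intom\frac{\mob(\rhost)}{\mob(\rhost)+\mu}\welost\xi_t\dd x$, and the remainder yields $\eps\intom\kinint\partial_t(\mob\xi_t)\dd x$. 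The $\rho$-transport contributes $\eps\intom\frac{\mobstp}{2}\big(\kinint\big)^2\partial_x(\mob\xi_t)\dd x$. Signs must be tracked carefully, and together these give exactly $A^s_t$ in \eqref{eq:Ast}, as an identity with no inequality needed.

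The main obstacle I expect is the rigorous meaning of $\partial_t(\mob(\rho^s_t)\xi_t)$ pointwise in $t$: we need $\partial_t\rhost\in L^2(\crc)$ for a.e.\ $t$ in the support of $\xi$. I would obtain this from Corollary \ref{cor:wxL2} together with the $t$-difference-quotient argument on \eqref{eq:henry1} sketched in the proof of Lemma \ref{lem:continuity1}. Once that is available, $\partial_t(\mob(\rhost)\xi_t)\in L^2$ and $\welost/(\mob+\mu)\in L^2$, so every integral above is finite. Summing the two parts gives \eqref{eq:diff1}.
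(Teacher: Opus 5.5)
Your proposal is correct and follows the paper's proof essentially step for step. The $\delta$-terms are handled by repeating the manipulations of Lemma \ref{lem:diff} verbatim, with $\eps|\Psi'|\le\eps/\tau\le1$ because $\tau\ge\eps$, while the $\xi$-terms give $A_t^s$ as an exact identity via $e^{-t/\eps}\partial_t(\mob(\rho^s_t)\Xi_t)=\eps\,\partial_t(\mob(\rho^s_t)\xi_t)+\mob(\rho^s_t)\xi_t$, and the regularity of $\partial_t\rho^s_t$ that you flag is taken from Lemma \ref{lem:continuity1} exactly as in the paper.
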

\begin{proof}
  By differentiability of the perturbation $(\rho^s,\welo^s)$ in $H^1(\crc)$ with respect to $s>0$,
  the $s$-derivative can be interchanged with differentiation,
  and the $H^2(\crc)$-regularity is sufficient to allow a variety of integration by parts.
  To begin with,
  \begin{align*}
    -\frac{\dd}{\dn s}e_t^s
    = -\intom \partial_{x}\rhost\,\partial_x\big(\partial_s\rhost\big)\dd x
    = \intom \partial_{xx}\rhost\,\big\{\delta\psi(t)\partial_{xx}\rhost+\partial_x\big(\mob(\rhost)\Xi_t\big)\big\}\dd x,
  \end{align*}
  and so, recalling \eqref{eq:psifromPsi2} and \eqref{eq:xi2Xi},
  \begin{align}
    \label{eq:e-diff1}
    -\frac{e^{-t/\eps}}{\eps}\frac{\dd}{\dn s}e^s_t
    = \delta\Psi(t)^2\intom \big(\partial_{xx}\rhost\big)^2\dd x
    + \intom\partial_{xx}\rhost\,\partial_x\big(\mob(\rhost)\xi_t\big)\dd x.
  \end{align}
  The derivative of the kinetic term is slightly more complicated:
  \begin{align*}
    -\frac{\dd}{\dn s}\hat\kinet_t^s
    &= -\intom \left\{\frac{\welost}{\mob(\rhost)+\mu}\partial_s\welost - \frac{\mob'(\rhost)}2\left(\frac{\welost}{\mob(\rhost)+\mu}\right)^2\partial_s\rhost\right\}\dd x \\
    &= \delta\psi(t)\intom \left\{-\frac{\welost}{\mob(\rhost)+\mu}\partial_{xx}\welost + \frac{\mob'(\rhost)}2\left(\frac{\welost}{\mob(\rhost)+\mu}\right)^2\partial_{xx}\rhost\right\}\dd x \\
    &\quad + \delta\psi'(t)\intom \frac{\welost}{\mob(\rhost)+\mu}\partial_x\rhost\dd x \\
    & \quad + \intom \left\{\frac{\welost}{\mob(\rhost)+\mu}\partial_t\big(\mob(\rhost)\Xi_t\big)
    + \frac{\mob'(\rhost)}2\left(\frac{\welost}{\mob(\rhost)+\mu}\right)^2 \partial_x\big(\mob(\rhost)\Xi_t\big)\right\} \dd x .
  \end{align*}
  
  For the terms that come with a pre-factor $\delta$ --- including the one in \eqref{eq:e-diff1} ---
  we proceed exactly like in the derivation of \eqref{eq:dds}, and thus obtain the contribution $B_t^s$;
  note that the perturbation $(\rhost,\welost)$ above is not identical to the perturbation used in the context of Lemma \ref{lem:diff},
  but the integral expressions are identical, and the same manipulations can be applied. Notice that the assumption $\tau\ge\eps$ allows us to obtain the estimates in similar form as in Lemma \ref{lem:diff}, in particular, we may repeat the estimate of $\eps|\Psi'|\le1$ employed in \eqref{eq:I3}.
  % 
  % Integrate by parts in the integral with pre-factor $\delta\psi(t)$ to reduce the second derivates of $\rhost$ and $\welost$ to first derivatives.
  % The integrand then becomes
  % \begin{align*}
  %   &\partial_x\left(\frac{\welost}{\mob(\rhost)+\mu}\right)\left(\partial_{x}\welost - \frac{\welost}{\mob(\rhost)+\mu}\partial_x\mob(\rhost)\right)
  %   -\frac{\mob''(\rhost)}2\left(\frac{\welost}{\mob(\rhost)+\mu}\right)^2(\partial_{x}\rhost)^2\\
  %   &\quad = \left[\partial_x\left(\frac{\welost}{\mob(\rhost)+\mu}\right)\right]^2
  %   +\frac{-\mob''(\rhost)}2\left(\frac{\welost}{\mob(\rhost)+\mu}\right)^2(\partial_{x}\rhost)^2.
  % \end{align*}
  % 
  Concerning the remaining terms, it suffices to observe that
  \begin{align*}
    e^{-t/\eps}\partial_t\big(\mob(\rhost)\Xi_t\big)
    = \eps\partial_t\big(\mob(\rhost)\xi_t\big) + \mob(\rhost)\xi_t
  \end{align*}
  to obtain  the result \eqref{eq:diff1}.
  % 
  % we obtain in total:
  % \begin{equation}
  %   \label{eq:k-diff1}
  %   \begin{split}
  %     -e^{-t/\eps}\frac{\dd}{\dn s}\hat\kinet_t^s
  %     &= \delta\eps\Psi(t)^2\intom \left\{\left[\partial_x\left(\frac{\welost}{\mob(\rhost)+\mu}\right)\right]^2
  %       +\frac{-\mob''(\rhost)}2\left(\frac{\welost}{\mob(\rhost)+\mu}\right)^2(\partial_{x}\rhost)^2\right\}\dd x \\
  %     &\quad + \delta\Psi(t)\big(\Psi(t)+2\Psi'(t)\big)\intom \frac{\welost}{\mob(\rhost)+\mu}\partial_x\rhost\dd x \\
  %     &\quad +\eps\intom \left\{\frac{\welost}{\mob(\rhost)}\partial_t\big(\mob(\rhost)\xi_t\big)
  %       + \frac{\mob'(\rhost)}2\left(\frac{\welost}{\mob(\rhost)+\mu}\right)^2 \partial_x\big(\mob(\rhost)\Xi_t\big)\right\} \dd x \\
  %     &\quad + \intom \frac{\mob(\rhost)}{\mob(\rhost)+\mu}\welost\xi_t\dd x.
  %   \end{split}
  % \end{equation}
  % Addition of \eqref{eq:e-diff1} and \eqref{eq:k-diff1} yields the result \eqref{eq:diff1}.
\end{proof}
\begin{lemma}
  There exists a constant $C>0$ --- depending on $\eps>0$, $\mu>0$, $\delta>0$ and on $\xi$, but independent of $s>0$ and $t\in(0,T)$ ---
  such that expressions $A_t^s$ and $B_t^s$ defined in \eqref{eq:Ast} and \eqref{eq:Bst}, respectively, satisfy
  \begin{equation}
    \label{eq:diffbelow}
    A_t^s + \frac\delta2\Psi(t)^2 B_t^s \ge - C\left(1+\hat\kinet_t^s+e_t^s\right) .
  \end{equation}
\end{lemma}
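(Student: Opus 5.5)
The plan is to bound each of the four terms in $A_t^s$ from below, one at a time. Every term is either absorbed into the good quantity $\frac\delta2\Psi(t)^2B_t^s$ by Young's inequality, or controlled by $1+\hat\kinet_t^s+e_t^s$. Since $\xi_t$ vanishes unless $t\in(2\tau,T-2\tau)$, where $\Psi\equiv1$, the prefactor $\Psi(t)^2$ may be replaced by $1$ wherever $A_t^s\neq0$. Outside that interval $A_t^s=0$ and the claim is trivial. Constants may depend on $\eps,\mu,\delta$ and on $\|\xi\|_{C^2}$.

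\emph{Terms without $\eps$.} For the first term, expand $\partial_x(\mob(\rhost)\xi_t)=\mob'(\rhost)\partial_x\rhost\,\xi_t+\mob(\rhost)\partial_x\xi_t$. Using $|\mob'|\le1$ and $\mob\le\frac12$, it is bounded in absolute value by $C\|\partial_{xx}\rhost\|_{L^2}(\|\partial_x\rhost\|_{L^2}+1)$. Young's inequality then gives at most $\frac\delta8\intom(\partial_{xx}\rhost)^2\dd x+C(1+e_t^s)$, and the first summand is absorbed into $\frac\delta2 B_t^s$. For the second term, since $\mob/(\mob+\mu)\le1$, we have $\intom|\welost\xi_t|\dd x\le C\|\welost\|_{L^2}\le C(1+\hat\kinet_t^s)$, using $\mob+\mu\le1$ so that $\|\welost\|_{L^2}^2\le2\hat\kinet_t^s$.

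\emph{The $\eps$-terms.} The main obstacle is the term containing $\partial_t(\mob(\rhost)\xi_t)=\mob'(\rhost)\partial_t\rhost\,\xi_t+\mob(\rhost)\partial_t\xi_t$, because the time derivative $\partial_t\rhost$ appears. The part with $\partial_t\xi_t$ is harmless and is bounded by $C(1+\hat\kinet_t^s)$ as before. For the other part I would use the continuity equation for the perturbed curve, which the preceding lemma establishes for $(\rho^s,\welo^s)$: $\partial_t\rhost=-\partial_x\welost$. The term then becomes
\[
-\eps\intom\frac{\welost\,\mob'(\rhost)}{\mob(\rhost)+\mu}\,\partial_x\welost\,\xi_t\dd x ,
\]
which is bounded by $\eps\mu^{-1}C\|\welost\|_{L^2}\|\partial_x\welost\|_{L^2}$. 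Young's inequality splits this into $\frac{\delta\eps\mu}{32}\intom(\partial_x\welost)^2\dd x$, absorbed into $\frac\delta2B_t^s$, and $C(\eps,\mu,\delta)\hat\kinet_t^s$. This is exactly why $B_t^s$ carries the $\frac{\eps\mu}8\|\partial_x\welo\|^2$ term. If the continuity equation is not available pointwise in $t$, the fallback is the relation $\partial_t\rho^s=-\partial_x\welo^s$ valid as an $L^2$ identity for a.e.\ $t$; Corollary \ref{cor:wxL2} supplies it at $s=0$, and it should propagate along the integral representations \eqref{eq:henry1} and \eqref{eq:henry2}.

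\emph{Last term.} Expand $\partial_x(\mob(\rhost)\xi_t)$ as above. The part $\frac{\mob'}2\big(\frac{\welost}{\mob+\mu}\big)^2\mob\,\partial_x\xi_t$ is bounded by $C\mu^{-1}\hat\kinet_t^s$. The remaining part is $\frac{(\mob')^2}{2}\frac{(\welost)^2}{(\mob+\mu)^2}\partial_x\rhost\,\xi_t$, which is bounded by $C\mu^{-2}\intom|\welost\,\partial_x\rhost|\,|\welost|\dd x$. Cauchy--Schwarz followed by Young gives at most $\frac{\delta\eps}{16}\intom(\welost)^2(\partial_x\rhost)^2\dd x+C\hat\kinet_t^s$, and the first piece is absorbed by the $\frac\eps4$-term in $\frac\delta2B_t^s$. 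Adding all four bounds yields \eqref{eq:diffbelow}.
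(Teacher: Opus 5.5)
Your proposal is correct and follows essentially the same route as the paper. The case $t\notin(2\tau,T-2\tau)$ is trivial, and the rest is term-by-term Young estimates: the continuity equation $\partial_t\rhost=-\partial_x\welost$ turns the third term into one absorbed by the $\frac{\eps\mu}{8}\intom(\partial_x\welost)^2\dd x$ part of $\frac\delta2 B_t^s$, and the $\partial_x\rhost$-part of the last term is absorbed by its $\frac\eps4\intom(\welost)^2(\partial_x\rhost)^2\dd x$ part. Your absorbed pieces ($\frac\delta8$, $\frac{\delta\eps\mu}{32}$, $\frac{\delta\eps}{16}$) stay within the available budget, so the argument closes as written. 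No fallback is needed for the continuity equation, since $\welost\in H^2(\crc)$ for $s>0$ makes the distributional identity an $L^2$ identity.
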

\begin{proof}
  For $t$ not in $(2\tau,T-2\tau)$, the test function $\xi_t$ vanishes, and so $A_t^s=0$;
  the claim \eqref{eq:diffbelow} follows trivially.
  Assume $t\in(2\tau,T-2\tau)$ from now on, 
  and accordingly $\Psi\equiv 1$.
  We estimate the four integrals in $A_t^s$ separately,
  bearing in mind that the $\xi$ is a fixed smooth and compactly supported test function:
  \begin{align*}
    \intom \partial_{xx}\rhost\,\partial_x\big(\mob(\rhost)\xi_t\big)\dd x
    &\ge -\frac\delta4\intom \big(\partial_{xx}\rhost\big)^2\dd x - \frac4\delta\big(\|\partial_x\xi_t\|_\infty^2+\|\xi_t\|_\infty^2e_t^s\big), \\
    \intom\frac{\mob(\rhost)}{\mob(\rhost)+\mu}\welost\,\xi_t\dd x
    &\ge -\|\xi_t\|_\infty\big(1+\hat\kinet_t^s\big), \\
    \eps\intom\frac{\welost}{\mob(\rhost)+\mu}\partial_t\big(\mob(\rhost)\xi_t\big) \dd x
    &= \eps \intom \frac{\mob(\rhost)\welost}{\mob(\rhost)+\mu}\partial_t\xi_t
      + \eps \intom \frac{\mob'(\rhost)\,\welost}{\mob(\rhost)+\mu}(-\partial_x\welost) \xi_t\dd x \\
    &\quad\ge -\eps\|\partial_t\xi_t\|_\infty\big(1+\hat\kinet_t^s\big)
      - \frac{\eps\mu^2\delta}{16}\intom \frac{\big(\partial_x\welost\big)^2}{\mob(\rhost)+\mu}
      - \frac{8\eps}{\mu^2\delta}\|\xi_t\|_\infty\hat\kinet_t^s, 
  \end{align*}
  
  and finally,
  \begin{align*}
    &\eps\intom \frac{\mob'(\rhost)}2\left(\frac{\welost}{\mob(\rhost)+\mu}\right)^2\partial_x\big(\mob(\rhost)\xi_t\big)\dd x \\
    &\quad= \frac\eps2\intom\frac{\mob'(\rhost)\mob(\rhost)}{\mob(\rhost)+\mu}\frac{(\welost)^2}{\mob(\rhost)+\mu}\partial_x\xi_t\dd x
      - \frac\eps2\intom \mob'(\rhost)^2\left(\frac{\welost}{\mob(\rhost)+\mu}\right)^2\partial_x\rhost\,\xi_t\dd x \\
    &\quad\ge -\eps\|\partial_x\xi_t\|_\infty\hat\kinet_t^s -\frac{\eps}{\mu^3\delta}\|\xi_t\|_\infty^2\hat\kinet_t^s
      -\frac{\eps\mu^3\delta}8\intom\frac{\big(\welost\big)^2\big(\partial_x\rhost\big)^2}{\big(\mob(\rhost)+\mu\big)^3}\dd x.
  \end{align*}
  
  Summation with $\delta/2 B_t^s$ yields the lower bound \eqref{eq:diffbelow}.
\end{proof}
\begin{lemma}
  There is a constant $C_T$, expressible in terms of $T>4$ and the initial energy $\nrg(\bar\rho_0)$ alone,
  such that, for all $\delta\in(0,1)$:
  \begin{align}
    \label{eq:almostdone}
    \begin{split}
      &\int_0^T\intom \bigg(
        \partial_{xx}\mrho\,\partial_x\big(\mob(\mrho)\xi_t\big)
        +\frac{\mob(\mrho)}{\mob(\mrho)+\mu}\mwelo\,\xi_t \\
        &\qquad +\eps\left\{
          \frac{\mwelo}{\mob(\mrho)+\mu}\partial_t\big(\mob(\mrho)\xi_t\big)
          + \frac{\mob'(\mrho)}2\left(\frac{\mwelo}{\mob(\mrho)+\mu}\right)^2\partial_x\big(\mob(\mrho)\xi_t\big)
        \right\}
      \bigg)\dd x \dd t
      \le C_T\delta.
    \end{split}
  \end{align}
\end{lemma}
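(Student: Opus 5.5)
The plan is to use minimality of $(\mrho,\mwelo)$ against the perturbation $(\rho^s,\welo^s)$ from \eqref{eq:chpert1}--\eqref{eq:chpertic}, which is admissible by the preceding lemmas. As in \eqref{eq:usemin}, for every $\sigma>0$ we have, by continuity at $s=0$ (Lemma \ref{lem:continuity1}) and differentiability for $s>0$,
\begin{align*}
  0 \ge \frac{\hat\wed^\eps(\mrho,\mwelo)-\hat\wed^\eps(\rho^\sigma,\welo^\sigma)}\sigma
  = \fint_0^\sigma\int_0^T\left(-\frac{e^{-t/\eps}}{\eps}\frac{\dd}{\dn s}\big(\eps\hat\kinet^s_t+e^s_t\big)\right)\dd t\dd s .
\end{align*}
Here the perturbation differs from the minimizer only for $t\in(0,T)$. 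Into the integrand I insert the lower bound \eqref{eq:diff1} of Lemma \ref{lem:diff1}. This gives
\begin{align*}
  \fint_0^\sigma\int_0^T \big(A_t^s+\delta\Psi(t)^2B_t^s\big)\dd t\dd s
  \le \delta\fint_0^\sigma\int_0^T\big(9\hat\kinet^s_t+18\big)\dd t\dd s .
\end{align*}
Since $B_t^s\ge0$, the term $\delta\Psi^2B^s_t$ can simply be dropped from the left-hand side. For the right-hand side, Lemma \ref{lem:cont9} gives $\hat\kinet^s\to\hat\kinet^0$ in $L^1((0,T))$. Together with \eqref{eq:kinetbound}, exactly as in \eqref{eq:CT}, it is bounded by $\delta(18T+5\nrg(\bar\rho_0))=:C_T\delta$ for $\sigma$ small.

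It then remains to pass $\sigma\searrow0$ in $\fint_0^\sigma\int_0^TA_t^s\dd t\dd s$ and identify the limit with $\int_0^T A_t^0\dd t$, which is the left-hand side of \eqref{eq:almostdone}. This is the main obstacle. $A_t^s$ contains $\partial_{xx}\rhost$ and $\partial_t\rhost$, equivalently $\partial_x\welost$ after expanding $\partial_t(\mob(\rhost)\xi_t)$ via the continuity equation. Lemma \ref{lem:cont9} only provides strong convergence of $\partial_x\rho^s$ and $\welo^s$ in $L^2$.

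To handle this, I would first use \eqref{eq:diffbelow} to control the negative part: $A_t^s+\frac\delta2\Psi^2B_t^s\ge -C(1+\hat\kinet_t^s+e_t^s)$, and the right side converges in $L^1((0,T))$ by Lemma \ref{lem:cont9}. Keeping half of $\delta\Psi^2B^s_t$ on the left, the same inequality then yields a uniform bound on $\fint_0^\sigma\int\delta\Psi^2B^s_t$. Hence $\partial_{xx}\rho^s$, $\partial_x\welo^s$ and $\sqrt\eps\,\welo^s\partial_x\rho^s$ are bounded in $L^2((2\tau,T-2\tau)\times\crc)$ along a sequence $s_n\searrow0$ (passing from the $s$-average to a good sequence as in \eqref{eq:itallcomesfromhere}).

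Each term of $A^s_t$ is then a product of one factor converging weakly in $L^2$ (namely $\partial_{xx}\rho^{s_n}$ or $\partial_x\welo^{s_n}$) with factors converging strongly. The strongly converging factors are $\partial_x(\mob(\rho^{s_n})\xi)$, $\welo^{s_n}/(\mob(\rho^{s_n})+\mu)$ and $\mob'(\rho^{s_n})$, which converge a.e.\ and are bounded since $\mu>0$ and $\mob,\mob'$ are bounded. The cubic term $(\welo^{s_n})^2\partial_x\rho^{s_n}$ is handled through the product bound on $\welo^{s_n}\partial_x\rho^{s_n}$. Weak--strong convergence, with Vitali for the $L^1$ terms, therefore gives $\int_0^TA_t^{s_n}\dd t\to\int_0^TA^0_t\dd t$, where $\xi$ is supported in $(2\tau,T-2\tau)$.

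Because of the averaging in $s$, I would carry out the limit with Fatou on the nonnegative quantity $A+\frac\delta2\Psi^2B+C(1+\hat\kinet+e)$ and exact limits of the lower-order parts. Combined with the bound above, this yields \eqref{eq:almostdone}.
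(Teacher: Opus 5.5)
Your skeleton matches the paper's. You test minimality against \eqref{eq:chpert1}--\eqref{eq:chpertic}, use the lower bound \eqref{eq:diff1}, control $B$ through \eqref{eq:diffbelow}, apply Alaoglu along a sequence $s_n\searrow0$, and pass to the limit term by term in $A$ by weak--strong convergence. But the last step has a genuine gap, and it comes from dropping $\delta\Psi^2B^s_t$ at the outset.

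After that, the only upper bound you have on $A$ is the averaged one, $\fint_0^\sigma\int_0^TA^s_t\dd t\dd s\le C_T\delta$. The sequence along which you prove $\int_0^TA^{s_n}_t\dd t\to\int_0^TA^0_t\dd t$ is chosen by a different criterion, namely boundedness of $\int_0^T\Psi^2B^{s_n}_t\dd t$. These two facts do not combine: nothing prevents $\int_0^TA^{s_n}_t\dd t$ from exceeding its $s$-average exactly at the selected points. Nor are the averages themselves shown to converge to $\int_0^TA^0_t\dd t$, since $B^s$ is not controlled for all small $s$.

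The closing Fatou remark does not repair this. Set $H:=A+\frac\delta2\Psi^2B+C(1+\hat\kinet+e)$. To get $\int_0^TH^0_t\dd t\le\liminf_{\sigma\searrow0}\fint_0^\sigma\int_0^TH^s_t\dd t\dd s$, you would need lower semicontinuity of $s\mapsto\int_0^TH^s_t\dd t$ at $s=0$ along arbitrary sequences. That means weak compactness of $\partial_{xx}\rho^s$, $\partial_x\welo^s$, $\welo^s\partial_x\rho^s$ along every $s\searrow0$, which an averaged bound on $B$ does not provide. Worse, \eqref{eq:diffbelow} only says $H\ge0$, since its proof consumes all of $\frac\delta2B$. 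So a bound on $H$ at some $s$ gives no bound on $B^s$ there.

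The fix is to keep $\delta\Psi^2B$ and select the sequence from the combined inequality, as the paper does. Since $\fint_0^\sigma\int_0^T\big(A^s_t+\delta\Psi(t)^2B^s_t\big)\dd t\dd s\le\delta C_T$ for all small $\sigma$, there are $s_n\searrow0$ with $\int_0^T\big(A^{s_n}_t+\delta\Psi(t)^2B^{s_n}_t\big)\dd t\le\delta C_T$; this is \eqref{eq:fromAB}. At these same $s_n$, \eqref{eq:diffbelow} gives $\frac\delta2\int_0^T\Psi(t)^2B^{s_n}_t\dd t\le\delta C_T+C\int_0^T\big(1+\hat\kinet^{s_n}_t+e^{s_n}_t\big)\dd t$, which is bounded by Lemma \ref{lem:cont9}. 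Your weak--strong argument then yields $\int_0^TA^{s_n}_t\dd t\to\int_0^TA^0_t\dd t$. Since $B\ge0$, you also have $\int_0^TA^{s_n}_t\dd t\le\delta C_T$, and \eqref{eq:almostdone} follows in the limit.

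A minor point: $\welo^{s_n}/(\mob(\rho^{s_n})+\mu)$ is not bounded. It does converge strongly in $L^2$, though, which is all the pairing with the weakly convergent $\partial_x\welo^{s_n}$ requires.
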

\begin{proof}
  The starting point is again the inequality \eqref{eq:usemin} that follows by minimality of $(\mrho,\mwelo)$
  in combination with continuity at $s=0^+$ and continuous differentiability for $s>0$ of $\hat\kinet_t^s$ and $e_t^s$,
  see Lemma \ref{lem:continuity1}.
  Using the lower bound \eqref{eq:diff1}, we obtain
  --- after elementary manipulations analogous to the ones performed in the proof of Proposition \ref{prp:regularity} ---
  for all sufficiently small $\sigma>0$ that
  \begin{align*}
    \fint_0^\sigma\int_{2\tau}^{T-2\tau} \big(A_t^s+\delta B_t^s\big)\dd t\dd s \le \delta C_T,
  \end{align*}
  with the constant $C_T$ from \eqref{eq:CT}.
  It follows that there exists a monotone null sequence $(s_n)$ of $s_n\in(0,1)$ with the property
  \begin{align}
    \label{eq:fromAB}
    \int_{2\tau}^{T-2\tau} \big(A_t^{s_n}+\delta B_t^{s_n}\big)\dd t \le \delta C_T.
  \end{align}
  Bounding $A_t^{s_n}+\frac\delta2B_t^{s_n}$ from below by \eqref{eq:diffbelow},
  and recalling the fundamental a priori estimates \eqref{eq:kinetbound} and \eqref{eq:potenzbound},
  we obtain
  \begin{align*}
    \frac\delta2\int_{2\tau}^{T-2\tau} B_t^{s_n} \dd t
    \le \delta C_T + C\int_0^T\big(1+\hat\kinet_t^{s_n}+e_t^{s_n}\big)\dd t
    \le \delta C_T + C(1+T)\big(1 + \nrg(\bar\rho_0)\big).
  \end{align*}
  By Alaoglu's theorem, we conclude
  that $\partial_{xx}\rho^{s_n}$, $\partial_x\welo^{s_n}$, and $\welo^{s_n}\partial_x\rho^{s_n}$
  converge weakly in $L^2([2\tau,T-2\tau]\times\crc)$, possibly after passing to a subsequence of $(s_n)$.
  Using the continuity of $\partial_x\rho^s$ and $\welo^s$ in $L^2([2\tau,T-2\tau]\times\crc)$ at $s=0^+$
  from Lemma \ref{lem:cont9},
  we extract a further subsequence of $(s_n)$
  so that $\partial_x\rho^{s_n}$ and $\welo^{s_n}$ converge pointwise a.e.,
  which allows to identify the weak limits as, respectively,
  $\partial_{xx}\mrho$, $\partial_x\mwelo$, and $\mwelo\,\partial_x\mrho$. 

  In summary, we have by Lemma \ref{lem:cont9}
  the following strong convergence in  $L^2([2\tau,T-2\tau]\times\crc)$,
  \begin{align}
    \label{eq:thesearestrong}
    \rho^{s_n}\to\mrho,\quad
    \partial_x\rho^{s_n}\to\partial_x\mrho,\quad
    \welo^{s_n}\to\mwelo,
    % \quad
    % \frac{\welo^{s_n}}{\mob(\rho^{s_n})+\mu} \to \frac{\mwelo}{\mob(\mrho)+\mu}
  \end{align}
  and by the reasoning above the following weak convergence in $L^2([2\tau,T-2\tau]\times\crc)$,
  \begin{align}
    \label{eq:theseareweak}
    \partial_{xx}\rho^{s_n}\rightharpoonup\partial_{xx}\mrho, \quad
    \partial_{x}\welo^{s_n}\rightharpoonup\partial_{x}\mwelo, \quad
    \welo^{s_n}\,\partial_x\rho^{s_n} \rightharpoonup \mwelo\,\partial_x\mrho.
  \end{align}
  Further, the following functions are bounded and converge to their respective limits almost everywhere:
  \begin{align}
    \label{eq:thesearebounded}
    \mob(\rho^{s_n})\to\mob(\mrho), \quad
    \mob'(\rho^{s_n})\to\mob'(\mrho), \quad
    \frac1{\mob(\rho^{s_n})+\mu} \to \frac1{\mob(\mrho)+\mu}
  \end{align}
  Moreover, observing that $\partial_x\mob(\rho^s)=\mob'(\rho^s)\partial_x\rho^s$,
  and that $\partial_t\mob(\rho^s)=-\mob'(\rho^s)\partial_x\welo^s$ by the continuity equation,
  we also have the strong respectively weak convergence
  \begin{align}
    \label{eq:thesearestrongweak}
    \partial_x\mob(\rho^{s_n})\to\partial_x\mob(\mrho), \quad
    \partial_t\mob(\rho^{s_n})\rightharpoonup\partial_t\mob(\mrho).    
  \end{align}
  We can now pass to the limit with the time integral of $A^s$ term by term:
  the limits of three integrals are immediate
  from \eqref{eq:theseareweak}, \eqref{eq:thesearestrong}, \eqref{eq:thesearebounded},
  and particularly from \eqref{eq:thesearestrongweak}:
  \begin{align*}
    \int_0^T\intom \partial_{xx}\rho^{s_n}\,\partial_x\big(\mob(\rho^{s_n})\xi_t\big)\dd x\dd t
    &\to \int_0^T\intom \partial_{xx}\mrho\,\partial_x\big(\mob(\mrho)\xi_t\big)\dd x\dd t, \\
    \int_0^T\intom \frac{\mob(\rho^{s_n})}{\mob(\rho^{s_n})+\mu}\welo^{s_n}\,\xi_t\dd x \dd t
    &\to\int_0^T\intom\frac{\mob(\mrho)}{\mob(\mrho)+\mu}\mwelo\,\xi_t\dd x\dd t, \\
    \int_0^T\intom\frac{\welo^{s_n}}{\mob(\rho^{s_n})+\mu}\partial_t\big(\mob(\rho^{s_n})\xi_t\big)\dd x\dd t 
    &\to \int_0^T\intom\frac{\mwelo}{\mob(\mrho)+\mu}\partial_t\big(\mob(\mrho)\xi_t\big)\dd x\dd t . 
  \end{align*}
  To obtain the limit in the last integral, we rewrite the integrand as follows:
  \begin{align*}
    &\int_0^T\intom\frac{\mob'(\rho^{s_n})}2\left(\frac{\welo^{s_n}}{\mob(\rho^{s_n})+\mu}\right)^2\partial_x\big(\mob(\rho^{s_n})\xi_t\big) \dd x\dd t \\
    &= \frac12\int_0^T\intom\left(\frac{\mob'(\rho^{s_n})}{\mob(\rho^{s_n})+\mu}\right)^2
      \welo^{s_n}\,\big(\welo^{s_n}\,\partial_x\rho^{s_n}\big)\,\xi_t \dd x\dd t %\\
      % &\qquad
      + \frac12\int_0^T\intom\frac{\mob(\rho^{s_n})\mob'(\rho^{s_n})}{\big(\mob(\rho^{s_n})+\mu\big)^2}
      \big(\welo^{s_n}\big)^2\partial_x\xi_t\dd x\dd t.
  \end{align*}
  For passage to the limit in the first integral,
  we combine strong convergence of $\welo^{s_n}$ with weak convergence of the product $\welo^{s_n}\partial_x\rho^{s_n}$,
  for the second, strong convergence of  $\welo^{s_n}$ is sufficient.

  To conclude \eqref{eq:almostdone},
  perform the limit in the inequality \eqref{eq:fromAB}, neglecting the non-negative contribution from $B^s$.
\end{proof}
% 
% \begin{lemma}
%   There is a constant $C$ --- depending on $\eps>0$ and $\xi$, but independent of $\delta>0$ ---
%   such that
%   \begin{equation}
%     \label{eq:chprefinal}
%     \begin{split}
%       &\int_0^T\intom \Big\{\partial_{xx}\bar\rho_t\,\partial_x\big(\mob(\bar\rho_t)\xi_t\big) + \frac{\mob(\bar\rho_t)}{\mob(\bar\rho_t)+\mu}\bar\welo_t\xi_t\Big\} \dd x\dd t \\
%       &+ \eps \int_0^T\intom\left\{\frac{\bar\welo_t}{\mob(\bar\rho_t)+\mu}\partial_t\big(\mob(\bar\rho_t)\xi_t\big)
%         + \frac{\mob'(\bar\rho_t)}2\left(\frac{\bar\welo_t}{\mob(\bar\rho_t)+\mu}\right)^2\partial_x\big(\mob(\bar\rho_t)\xi_t\big)\right\}\dd x\dd t \\
%       &\ge -C\delta.
%     \end{split}
%   \end{equation}
% \end{lemma}
% %
% \begin{proof}
%   Define the extensions $\hat\rho^s:(0,T)\times\crc\times(0,1)\to\R$ and $\hat\welo^s:(0,T)\times\crc\times(0,1)\to\R$ by
%   \begin{align*}
%     \hat\rho^s(t;x;\sigma) = \rho_t^{\sigma s}(x).
%   \end{align*}
%   It follows that $\hat\rho^s\to\hat\rho^0$ in $L^2((0,T)\times\crc\times(0,1)$.
%   
%   From the fact that $(\mrho,\mwelo)$ is a minimizer,
%   it follows that $\partial_{xx}\hat\rho^s$ is $s$-uniformly bounded in $L^2((0,T)\times\crc\times(0,1))$,
%   and thus converges weakly in that space to $\partial_{xx}\hat\rho^0$:
%   weak convergence along sub-subsequences of $s\searrow0$ follows by Aloglu's theorem,
%   and since the limit is uniquely determined from \eqref{}, the weak limit holds for $s\searrow0$.
% \end{proof}
%
\begin{proof}[Proof of Theorem \ref{thm:chpde}]
  Since the estimate \eqref{eq:almostdone} holds with a $\delta$-uniform constant $C_T$,
  the left-hand side is \emph{non-positive}.
  The analogous argument applies with $\xi$ replaced by $-\xi$,
  which shows that the left-hand side of \eqref{eq:almostdone} is \emph{non-negative}.
  Thus, it must be zero, which produces \eqref{eq:chfinal}.
\end{proof}

%%%%%%%%%%%%%%%%%%%%%%%%%%%%%%%%%%%%%%%%%%%%%%%%%%%%%%% 
\subsection{Limit of vanishing regularization}
%%%%%%%%%%%%%%%%%%%%%%%%%%%%%%%%%%%%%%%%%%%%%%%%%%%%%%%
%
\begin{proof}[Proof of Theorem \ref{thm:chlimit}]
  The first part of the proof is performed in complete analogy to that of Theorem \ref{thm:tflimit},
  that is, 
  the weak solution $(\bar\rho^*,\bar\welo^*)$ is obtained as limit of the minimizers $(\bar\rho^{\eps_n},\bar \welo^{\eps_n})$
  along a suitable sequence $\eps_n\searrow0$.
  Recall that we assume the connection \eqref{eq:epsmu} between $\mu$ and $\eps$,
  so that $\mu_n:=\mu(\eps_n)$ tends to zero as well. 
  
  On the one hand, the general considerations from Section \ref{sct:WED}
  provide convergence $\bar\rho^{\eps_n}_t\to\bar\rho^*_t$ in the bounded Lipschitz distance $\BL$,
  locally uniformly with respect to $t\ge0$;
  this guarantees weak continuity of $\bar\rho^*$ in time,
  and in particular attainment of the initial datum, $\bar\rho^*(0)=\bar\rho_0$.
  On the other hand,
  on grounds of the a priori estimate \eqref{eq:regularity},
  and by means of a diagonal argument with respect to time intervals,
  one concludes --- similarly as in \eqref{eq:wrapup} --- that
  \begin{align*} 
    \bar\rho^{\eps_n}\to\bar\rho^* \quad
    \text{weakly in $L^2_\loc((0,\infty);H^2(\crc))$}, \quad
    \text{strongly in $L^2_\loc((0,\infty);H^1(\crc))$}.
  \end{align*}
  In addition, and without loss of generality, we may assume that $\bar\rho^{\eps_n}$ converges pointwise a.e.
  Concerning $\bar\welo^{\eps_n}$,
  it suffices to observe that the general a priori bound \eqref{eq:kinetbound}
  implies weak convergence to a limit $\bar\welo^*$ in $L^2_\loc((0,\infty)\times\crc)$,
  and that the continuity equation ---  in distributional form --- passes to the limit,
  \begin{align*}
    \partial_t\bar\rho^* + \partial_x\bar\welo^* = 0.
  \end{align*}
  What is slighly more challenging than in the proof of Theorem \ref{thm:tflimit}
  is to obtain the weak form \eqref{eq:tf0weak_m} of the Cahn-Hilliard equation \eqref{eq:intro-ch}
  as limit in the integral formulation \eqref{eq:chfinal}.
  Let a test function $\xi:=\partial_x\Phi$ be fixed, with support in $[2\tau,T-2\tau]\times\crc$,
  and assume $\eps_n<\tau$ in the following.
  For the computations below, we omit the sub-index $n$ of $\eps_n$ and $\mu_n$.

  To begin with, we have pointwise a.e. convergence
  of the uniformly bounded quotient $\mob(\mrho)/(\mob(\mrho)+\mu)$ to the constant one.
  With weak convergence of $\bar\welo^\eps$, this implies
  \begin{align*}
    \int_0^T\intom \frac{\mob(\mrho)}{\mob(\mrho)+\mu}\mwelo\,\xi\dd x\dd t
    \to 
    \int_0^T\intom \bar\welo^*\,\partial_x\Phi \dd x\dd t
    = \int_0^T\intom \bar\rho^*\, \partial_t\Phi\dd x\dd t.
  \end{align*}
  Next, strong convergence of $\partial_x\mrho$
  implies that
  \begin{align*}
    \partial_x\big(\mob(\mrho)\xi\big) \to \partial_x\big(\mob(\bar\rho^*)\,\partial_x\Phi\big)
    \quad \text{strongly in $L^2_\loc((0,\infty)\times\crc)$},
  \end{align*}
  and so, in combination with the weak convergence of $\partial_{xx}\mrho$:
  \begin{align*}
    \int_0^T\intom \partial_{xx}\mrho\,\partial_x\big(\mob(\mrho)\xi\big)\dd x\dd t
    \to
    \int_0^T\intom \partial_{xx}\bar\rho^*\,\partial_x\big(\mob(\bar\rho^*)\,\partial_x\Phi\big)\dd x\dd t.
  \end{align*}
  It remains to verify that the contributions with prefactor $\eps$ in \eqref{eq:chfinal} vanish in the limit.
  Recalling that $\mob\le1$ and $|\mob'|\le1$, we obtain
  \begin{align*}
    &\left|\eps\int_0^T\intom \,\frac{\mwelo}{\mob(\mrho)+\mu}\partial_t\big(\mob(\mrho)\xi\big)\dd x\dd t\right| \\
    &\quad \le \eps\left(\int_0^T\intom\frac{(\mwelo)^2}{\mob(\mrho)+\mu}\dd x\dd t\right)^{1/2}
      \left(2\int_0^T\intom\frac{\big[\mob(\mrho)\,\partial_t\xi\big]^2+\big[\mob'(\mrho)\,\partial_x\mwelo\,\xi\big]^2}{\mob(\mrho)+\mu}\dd x\dd t\right)^{1/2}\\
    &\quad \le2 \left(\int_0^T\hat\kinet (\mrho,\mwelo)\dd t\right)^{1/2}\left(\eps^2\|\partial_t\xi\|_\infty^2 T+\frac{\eps}{\mu^2}\|\xi\|_\infty^2\left[\eps\mu\int_0^T\intom(\partial_x\mwelo)^2\dd x\dd t\right]\right)^{1/2},
  \end{align*}
  as well as
  \begin{align*}
    &\left|\eps\int_0^T\intom\frac{\mob'(\mrho)}2\left(\frac{\mwelo}{\mob(\mrho)+\mu}\right)^2\partial_x\big(\mob(\mrho)\xi\big)\dd x\dd t\right| \\
    &\quad \le \frac\eps2\left|\int_0^T\intom\frac{\mob(\mrho)\mob'(\mrho)(\mwelo)^2}{\big(\mob(\mrho)+\mu\big)^2}\,\partial_x\xi\dd x\dd t\right|
      + \frac\eps2\left|\int_0^T\intom \mob'(\mrho)^2\left(\frac{\mwelo}{\mob(\mrho)+\mu}\right)^2\,\partial_x\mrho\,\xi \dd x\dd t\right| \\
    &\quad \le \frac\eps{2}\|\partial_x\xi\|_\infty\int_0^T\intom\frac{(\mwelo)^2}{\mob(\mrho)+\mu}\dd x\dd t \\
    &\qquad + \frac\eps{2\mu^{3/2}}\|\xi\|_\infty\left(\int_0^T\intom\frac{(\mwelo)^2}{\mob(\mrho)+\mu}\dd x\dd t\right)^{1/2}
      \left(\mu^3\int_0^T\intom\frac{\big(\mwelo\,\partial_x\mrho\big)^2}{(\mob(\mrho)+\mu)^3}\dd x\dd t\right)^{1/2} \\
    &\quad\le \eps\|\partial_x\xi\|_\infty\int_0^T\hat\kinet (\mrho,\mwelo)\dd t
    + \frac{\eps^{1/2}}{2\mu^{3/2}}\|\xi\|_\infty\left(2\int_0^T\hat\kinet (\mrho,\mwelo)\dd t\right)^{1/2}
      \left(\eps\int_0^T\intom\big(\mwelo\,\partial_x\mrho\big)^2\dd x\dd t\right)^{1/2}.
  \end{align*}
  To conclude that the respective right-hand sides vanish as $\eps\to0$,
  it suffices to recall the general bound \eqref{eq:kinetbound} on the integrated kinetic energy,
  the $L^2$-bound on $\partial_x\mwelo$ and on the product $\mwelo\,\partial_x\mrho$ from \eqref{eq:regularity},
  and that $\mu$ has been chosen with $\eps/\mu^3\to0$ in \eqref{eq:epsmu}.
\end{proof}

%%%%%%%%%%%%%%%%%%%%%%%%%%%%%%%%%%%%%%%%%%%%%%%%%%%%%%%
%%%%%%%%%%%%%%%%%%%%%%%%%%%%%%%%%%%%%%%%%%%%%%%%%%%%%%%
\appendix
%%%%%%%%%%%%%%%%%%%%%%%%%%%%%%%%%%%%%%%%%%%%%%%%%%%%%%%
%%%%%%%%%%%%%%%%%%%%%%%%%%%%%%%%%%%%%%%%%%%%%%%%%%%%%%%

%%%%%%%%%%%%%%%%%%%%%%%%%%%%%%%%%%%%%%%%%%%%%%%%%%%%%%%
\section{An integral estimate}
%%%%%%%%%%%%%%%%%%%%%%%%%%%%%%%%%%%%%%%%%%%%%%%%%%%%%%%
%
The following is a simple estimate in the spirit of \cite{villani_inequality}.
%
%\begin{lemma}
%  \label{lem:villani}
%  Every smooth positive function $f:\crc\to\Rp$ satisfies
%  \begin{align}
%    \label{eq:villani}
%    \intom f_x^4\dd x \le \frac34\intom \big[(f^2)_{xx}\big]^2\dd x. 
%  \end{align}
%\end{lemma}
%
%\begin{proof} % [Proof of Lemma \ref{lem:villani}]
%  On the one hand,
%  \begin{align*}
%    \big[(f^2)_{xx}\big]^2 = 4\big[ff_{xx}+f_x^2\big]^2 = 4\big[f_x^4+2ff_x^2f_{xx}+f^2f_{xx}^2\big].
%  \end{align*}
%  On the other hand,
%  \begin{align*}
%    \intom ff_x^2f_{xx}\dd x = \frac13 \intom f\big((f_x)^3\big)_x\dd x
%    = -\frac13\intom f_x^4\dd x.
%  \end{align*}
%  In combination,
%  \begin{align*}
%    \intom \big[(f^2)_{xx}\big]^2\dd x
%    = 4\intom \left[\frac13f_x^4 + f^2f_{xx}^2\right]\dd x
%    \ge \frac43\intom f_x^4\dd x.
%  \end{align*}
%\end{proof}
\begin{lemma}
  \label{lem:villani}
  Every smooth positive function $f:\crc\to\Rp$ satisfies
  \begin{align}
    \label{eq:villani}
    \intom\frac{f_x^2}{f}\dd x \le 3\left(\intom f_{xx}^2\dd x\right)^{1/2}. 
  \end{align}
\end{lemma}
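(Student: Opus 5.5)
Integrating by parts reduces the left side of \eqref{eq:villani} to a single integral involving $f_{xx}$; Cauchy-Schwarz then closes the bound once $\intom f^2\ln^2 f\dd x$ is controlled. The one free constant we have is an additive shift of the logarithm. This works because on the torus $\intom f_x\dd x=0$ and $\intom f_{xx}\dd x=0$.

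\emph{Step 1 (integration by parts).} Since $f_x^2/f=f_x\,(\ln f)_x$, for any constant $c$ we have
\begin{align*}
  \intom\frac{f_x^2}{f}\dd x
  = \intom f_x\,\partial_x\big(\ln f - c\big)\dd x
  = -\intom f_{xx}\,\big(\ln f-c\big)\dd x
  \le \|f_{xx}\|_{L^2}\,\|\ln f - c\|_{L^2}.
\end{align*}
This form is scale-free in the right way, and all difficulty is moved into the logarithm.

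\emph{Step 2 (the logarithm: main obstacle).} The difficulty is that $\ln f$ may be large where $f$ is small, and $\|\ln f-c\|_{L^2}$ cannot be bounded by $\|f_{xx}\|_{L^2}$ uniformly in scale. A plain Cauchy-Schwarz therefore fails. Instead I would work with the weighted form $\intom f_{xx}\,\phi(f)\dd x$ with $\phi=\ln$, and control it by Cauchy-Schwarz as $\big(\intom f_{xx}^2\dd x\big)^{1/2}\big(\intom\ldots\big)^{1/2}$.

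A cleaner route is to write $f_x^2/f=4(\sqrt f)_x^2$ and set $g=\sqrt f$. Then
\begin{align*}
  \intom\frac{f_x^2}{f}\dd x = 4\intom g_x^2\dd x = -4\intom g\,g_{xx}\dd x,
\end{align*}
and $f_{xx}=2g g_{xx}+2g_x^2$. Hence
\begin{align*}
  -4\intom g g_{xx}\dd x = -2\intom f_{xx}\dd x + 4\intom g_x^2\dd x,
\end{align*}
which is a tautology and gives nothing.

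I therefore return to the estimate via the sup norm. Let $x_0$ be a minimum point of $f$, so that $f_x(x_0)=0$. Since the torus has unit length,
\begin{align*}
  \|f_x\|_\infty\le\intom|f_{xx}|\dd x\le\|f_{xx}\|_{L^2}=:N.
\end{align*}
For $|x-x_0|=r$ this gives $f(x)\ge f(x_0)+\tfrac12 f_x(x)^2/N$. Indeed, along the segment from $x_0$ to $x$ we have $|f_x|\le N|y-x_0|$, so $f_x(x)^2\le 2N\big(f(x)-f(x_0)\big)$ by integrating $\partial_y(f_x^2)=2f_xf_{xx}$. This is the standard Glaeser-type argument.

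Applied at each $x$, using the nearest critical point, or more robustly for nonnegative $f$ with $\|f_{xx}\|_\infty$ replaced via the local maximal function, this yields $f_x^2\le 2N f$ pointwise only with a sup bound. In the $L^2$ setting I would instead apply the Glaeser argument as follows. On each interval between consecutive zeros of $f_x$, $f$ is monotone, and I integrate $f_x^2/f$ there. The substitution $u=f$ turns that piece into $\int |f_x|\dd u/u$.

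\emph{Step 3 (bound on a monotone piece).} On a monotone piece, bound $|f_x|$ by $\big(\int|f_{xx}|\big)^{1/2}(\ldots)$ and sum over the pieces using Cauchy-Schwarz; the total length is $1$. I expect the constant $3$ to come out of optimizing these elementary inequalities. This step is where I expect the real work to be.
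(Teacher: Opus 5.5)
Your proposal has a genuine gap: it never arrives at an estimate.

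Step~1 is rightly abandoned, since $\inf_c\|\ln f-c\|_{L^2}$ is not bounded by a universal constant.

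Step~2 does not hold under the available hypothesis. From $f_x(x_0)=0$ and $\|f_{xx}\|_{L^2}=N$ you only get $|f_x(y)|\le N|y-x_0|^{1/2}$, not $N|y-x_0|$. The Glaeser bound $f_x^2\le 2N\big(f-f(x_0)\big)$ needs $|f_{xx}|\le N$ pointwise, and also monotonicity of $f$ between $x_0$ and $x$. With only $L^2$ control it is false. Put $f_{xx}\approx h$ on an interval of length $\eta$ next to the minimum, so that $N\sim h\eta^{1/2}$. At the end of that interval $f_x^2=h^2\eta^2$, while $2N\big(f-f(x_0)\big)\sim h^2\eta^{5/2}$.

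Step~3 is a statement of intent rather than an argument, and as sketched it meets the same obstruction as Step~1. After substituting $u=f$ on a monotone piece you must bound $\int|f_x|\,\dd u/u$. Any bound on $|f_x|$ that does not vanish as $u\to\min f$ produces a factor $\ln(\max f/\min f)$, which is not controlled. What is needed is a mechanism coupling the smallness of $f_x$ to the smallness of $f$. ``Optimizing elementary inequalities'' on the pieces does not provide one.

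The missing idea is to test $f_{xx}$ against $f_x^2/f$ itself rather than against $\ln f$. Integrating the identity $\big(f_x^3/f\big)_x=3\,\frac{f_x^2}{f}\,f_{xx}-\frac{f_x^4}{f^2}$ over $\crc$ gives
\[
  \intom\frac{f_x^4}{f^2}\dd x=3\intom\frac{f_x^2}{f}\,f_{xx}\dd x\le 3\left(\intom\frac{f_x^4}{f^2}\dd x\right)^{1/2}\left(\intom f_{xx}^2\dd x\right)^{1/2}.
\]
Hence $\|f_x^2/f\|_{L^2}\le 3\|f_{xx}\|_{L^2}$, and the claim follows by Cauchy--Schwarz on the unit-length torus. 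This is the paper's proof. The estimate lives naturally at the level of the $L^2$ norm of $f_x^2/f$, where the bounded quantity reappears on the right-hand side and can be absorbed; the $L^1$ statement is a corollary.

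Your decomposition into the components $J$ of $\{f_x\neq0\}$ could be rescued by running this same identity on each $J$, since the boundary term $f_x^3/f$ vanishes at the endpoints. That yields $\int_J f_x^2/f\dd x\le 3\|f_{xx}\|_{L^2(J)}|J|^{1/2}$, which sums correctly by Cauchy--Schwarz. But then the decomposition is superfluous.
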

\begin{proof}
This follows from
   \begin{align}
    \label{eq:villani_help}
    \left(\intom\frac{f_x^4}{f^2}\dd x\right)^{1/2} \le 3\left(\intom f_{xx}^2\dd x\right)^{1/2} 
  \end{align}
by means of H\"oders's inequality. The validity of \eqref{eq:villani_help} may be shown by integrating the identity
\begin{align*}
    \left(\frac{f_x^3}{f}\right)_x=-\frac{f_x^4}{f^2}+3\frac{f_x^2}{f}f_{xx}.
\end{align*}
Indeed, applying H\"oders's inequality again, we obtain
\begin{align*}
    \intom \frac{f_x^4}{f^2}\dd x =3 \intom \frac{f_x^2}{f}f_{xx}\dd x \le 3 \left(\intom \frac{f_x^4}{f^2}\dd x \right)^{1/2}\left(\intom f_{xx}^2\dd x \right)^{1/2},
\end{align*}
which is \eqref{eq:villani_help}.
\end{proof}

%\begin{lemma}
 % \label{lem:villani}
%  Every smooth positive function $f:\crc\to\Rp$ satisfies
%  \begin{align}
%    \label{eq:villani}
%    \intom \Big[\big(\sqrt{f}\big)_{x}\Big]^4\dd x \le \frac{9}{16}\intom f_{xx}^2\dd x. 
%  \end{align}
%\end{lemma}
%\begin{proof}
%The proof is based on the inequality 
%\begin{align}
%\label{eq:villani_help}
%     \intom \frac{f_x^4}{f^2}\dd x \le 9\intom f^2_{xx}\dd x,
%\end{align}
%which follows from integrating the identity
%\begin{align*}
%    \left(\frac{f_x^3}{f}\right)_x=-\frac{f_x^4}{f^2}+3\frac{f_x^2}{f}f_{xx}.
%\end{align*}
%Indeed,
%\begin{align*}
%    \intom \frac{f_x^4}{f^2}\dd x =3 \intom \frac{f_x^2}{f}f_{xx}\dd x \le 3 \left(\intom \frac{f_x^4}{f^2}\dd x \right)^{1/2}\left(\intom f_{xx}^2\dd x \right)^{1/2}.
%\end{align*}
%Now take the square to obtain \eqref{eq:villani_help}. To conclude, observe that $(f_x)^4/f^2=16\big[\big(\sqrt{f}\big)_{x}\big]^4.$
%\end{proof}

%%%%%%%%%%%%%%%%%%%%%%%%%%%%%%%%%%%%%%%%%%%%%%%%%%%%%%%
\section{A generalized Aubin-Lions theorem}
%%%%%%%%%%%%%%%%%%%%%%%%%%%%%%%%%%%%%%%%%%%%%%%%%%%%%%% 
%
The following variant of the Aubin-Lions compactness theorem has been proven
--- in an even more general form --- in \cite{RossiSavare}, see Theorem 2 therein.
\begin{theorem}
  \label{thm:savare}
  On a convex closed subset $U$ of a Banach space $X$, let be given:
  \begin{itemize}
  \item a lower semi-continuous functional $\fnc:U\to[0,+\infty]$ with relatively compact sublevels;
  \item a lower semi-continuous functional $g:U\times U\to[0,+\infty]$
    with the property that $g(\rho,\eta)=0$ for $\rho,\eta\in U$ implies $\rho=\eta$.
  \end{itemize}
  Consider a sequence $(\rho_n)$ of curves $\rho_n:[0,T]\to U$ with the properties that
  \begin{itemize}
  \item $(\rho_n)$ is tight with respect to $\fnc$, i.e.,
    \begin{align}
      \label{eq:tight}
      \sup_n \int_0^T \fnc(\rho_n)\dd t < \infty;
    \end{align}
  \item $(\rho_n)$ is weakly integral equi-continuous with respect to $g$, i.e.,
    \begin{align}
      \label{eq:equicont}
      \lim_{\tau\searrow0}\sup_n \int_0^{T-\tau}g\big(\rho_n(t+\tau),\rho_n(t)\big)\dd t = 0.
    \end{align}
    Then, there is a sub-sequence $(\rho_{n'})\subseteq(\rho_n)$ such that $\rho_{n'}$
    converges in measure with respect to $t\in(0,T)$ to a limit $\rho_*:[0,T]\to U$.
  \end{itemize}
\end{theorem}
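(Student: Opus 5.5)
The statement to prove is the generalized Aubin--Lions Theorem \ref{thm:savare}. My plan follows the Rossi--Savar\'e strategy: first get pointwise-in-$t$ relative compactness from tightness, then use $g$-equicontinuity to force the limit points to be $t$-consistent, via a Young-measure argument.

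\textbf{Step 1: Young measures.} By \eqref{eq:tight} and the relative compactness of the sublevels of $\fnc$, the sequence $(\rho_n)$ is tight as a family of measurable maps $[0,T]\to U$. Concretely, $\int_0^T \fnc(\rho_n)\dd t\le S$ together with Chebyshev gives
\[
\big|\{t:\fnc(\rho_n(t))>R\}\big|\le S/R .
\]
Since $\{\fnc\le R\}$ is relatively compact, the fundamental theorem on Young measures applies. It yields a subsequence and a parametrized family $(\nu_t)_{t\in(0,T)}$ of probability measures on $U$ such that $\delta_{\rho_{n'}(t)}\otimes\dd t\to\nu_t\otimes\dd t$ narrowly. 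Moreover, $\nu_t$ is concentrated on the closure of $\{\fnc<\infty\}$ for a.e.\ $t$, and, by lower semicontinuity of $\fnc$,
\[
\int_0^T\int\fnc\dd\nu_t\dd t\le S .
\]

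\textbf{Step 2: the Young measures are Dirac masses.} It then suffices to show $\nu_t=\delta_{\rho_*(t)}$ for a.e.\ $t$, because narrow convergence to Dirac Young measures is equivalent to convergence in measure. To this end, apply the Young-measure theorem to the pairs $(\rho_{n'}(t),\rho_{n'}(t+\tau))$. Taking a diagonal subsequence over rational $\tau$, these pairs generate limit measures $\nu^\tau_t$ on $U\times U$ with marginals $\nu_t$ and $\nu_{t+\tau}$. Lower semicontinuity of $g$ together with \eqref{eq:equicont} gives
\[
\int_0^{T-\tau}\iint g(x,y)\dd\nu^\tau_t(x,y)\dd t\le\liminf_{n'}\int_0^{T-\tau}g\big(\rho_{n'}(t+\tau),\rho_{n'}(t)\big)\dd t=:\omega(\tau),
\]
and $\omega(\tau)\to0$ as $\tau\searrow0$.

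\textbf{Step 3: passing $\tau\to0$ (the main obstacle).} This is the delicate point: as $\tau\to0$ one must recover the diagonal coupling of $\nu_t$ with itself. I would handle it by averaging in $t$ and arguing by contradiction. Suppose $\nu_t$ is not a Dirac mass on a set of positive measure. Then there are two disjoint compact sets $K_1,K_2$ inside a sublevel of $\fnc$ with $\nu_t(K_1)\nu_t(K_2)\ge\eta>0$ on a set of $t$ of positive measure. Because $g$ is lower semicontinuous and vanishes only on the diagonal, it has a positive lower bound $\gamma$ on $K_1\times K_2$.

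Next, translation continuity in $L^1(0,T)$ of the maps $t\mapsto\int\phi\dd\nu_t$, for bounded continuous $\phi$, shows that the mass $\nu^\tau_t$ puts on $K_1\times K_2$ cannot vanish as $\tau\to0$. The reason is that $\nu_{t+\tau}\approx\nu_t$ in a weak averaged sense while $\nu_t$ charges both sets; this gives a lower bound of the form $\gamma\eta'$ on the left-hand side above. That contradicts $\omega(\tau)\to0$.

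The remaining risk is making the step ``$\nu_{t+\tau}\approx\nu_t$ forces cross-mass'' rigorous. In particular, the couplings $\nu^\tau_t$ could a priori be close to the diagonal coupling. If this becomes cumbersome, I would instead cite \cite[Theorem 2]{RossiSavare} directly, since the statement is exactly a specialization of it.
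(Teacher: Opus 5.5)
The paper gives no proof of this theorem; it quotes it from \cite[Theorem 2]{RossiSavare}, so your fallback is exactly what the paper does. Your self-contained route, however, has a genuine gap at Step 3, and it cannot be closed with the data you keep. Step 2 retains only the limit couplings $\nu^\tau_t$ at fixed $\tau$, together with a $\liminf$. This discards the uniformity in $\tau$ that \eqref{eq:equicont} provides, and the claim that $\nu^\tau_t(K_1\times K_2)$ cannot vanish is then false. Here is a counterexample.
\begin{itemize}
\item Take $X=\R$, $U=[0,1]$, $\fnc\equiv0$, $g(x,y)=|x-y|$.
\item Take the square waves $\rho_n(t)=0$ if $\lfloor nt\rfloor$ is even and $\rho_n(t)=1$ otherwise, along $n_k=2\,k!$.
\item For every rational $\tau=p/q$ and $k\ge q$, the number $n_k\tau$ is an even integer, so $\rho_{n_k}(\cdot+\tau)=\rho_{n_k}$.
\end{itemize}
Consequently $\nu_t=\frac12(\delta_0+\delta_1)$ for all $t$, which is trivially translation-continuous. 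Every $\nu^\tau_t$ is the diagonal coupling $\frac12(\delta_{(0,0)}+\delta_{(1,1)})$, and your $\omega(\tau)$ vanishes at every rational $\tau$. Yet $\nu_t^\tau(\{0\}\times\{1\})=0$ although $\nu_t(\{0\})\,\nu_t(\{1\})=\frac14$, and $\nu_t$ is not a Dirac mass. Of course \eqref{eq:equicont} fails for this sequence, e.g.\ at $\tau=1/n_k$. That failure is invisible to the information your Step 2 keeps.

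The missing idea is to average \eqref{eq:equicont} over $\tau\in(0,\delta)$ \emph{before} passing to the limit. Set $\omega(\delta):=\sup_{0<\tau<\delta}\sup_n\int_0^{T-\tau}g(\rho_n(t+\tau),\rho_n(t))\dd t$, which tends to $0$ by hypothesis. Averaging gives
\[
\frac1\delta\iint_{\{0<s-t<\delta\}}g\big(\rho_n(s),\rho_n(t)\big)\dd s\dd t\le\omega(\delta)\quad\text{for all $n$.}
\]
This is an integral of the two-parameter maps $(s,t)\mapsto(\rho_n(s),\rho_n(t))$ on $(0,T)^2$. These maps generate the \emph{product} Young measure $\nu_s\otimes\nu_t$ (test with $\phi(s)\psi(t)f(x)h(y)$), so the coupling is forced to be the independent one. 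Put $a(t)=\nu_t(K_1)$ and $b(t)=\nu_t(K_2)$. Lower semicontinuity of $g$ and $g\ge\gamma$ on $K_1\times K_2$ then give
\[
\frac\gamma\delta\int_0^\delta\int_0^{T-\tau}a(t+\tau)\,b(t)\dd t\dd\tau\le\omega(\delta).
\]
Now your $L^1$-translation-continuity argument works as you intended: letting $\delta\searrow0$ yields $\int_0^T a(t)\,b(t)\dd t=0$. Run this over a countable family of pairs of disjoint compact sets, for instance closed balls with rational radii around a countable dense subset, intersected with $\overline{\{\fnc\le R\}}$ for $R\in\N$. This shows $\nu_t$ is a Dirac mass $\delta_{\rho_*(t)}$ for a.e.\ $t$, with $\rho_*(t)\in U$ since $U$ is closed. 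Your remark that Dirac Young measures are equivalent to convergence in measure then finishes the proof.
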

In our applications of Theorem \ref{thm:savare},
we choose $X$ as some subspace of $L^1(\crc)$,
the set $U$ as $X\cap\prbr{I}$, and
$g(\rho,\eta)=\BL(\rho,\eta)$.
Moreover, we use that \eqref{eq:equicont} is implied by (usual) equi-continuity.

\bibliographystyle{plain}
\bibliography{bibliography}

\end{document}